\definecolor{gre}{rgb}{0.03,0.50,0.03}
\newcommand{\bbE}{{\ensuremath{\mathbb E}} }
\newcommand{\bbF}{{\ensuremath{\mathbb F}} }
\newcommand{\bbG}{{\ensuremath{\mathbb G}} }
\newcommand{\bbH}{{\ensuremath{\mathbb H}} }
\newcommand{\bbP}{{\ensuremath{\mathbb P}} }
\newcommand{\bbY}{{\ensuremath{\mathbb Y}} }
\newcommand{\cA}{{\ensuremath{\mathcal A}} }
\newcommand{\cB}{{\ensuremath{\mathcal B}} }
\newcommand{\cD}{{\ensuremath{\mathcal D}} }
\newcommand{\cE}{{\ensuremath{\mathcal E}} }
\newcommand{\cF}{{\ensuremath{\mathcal F}} }
\newcommand{\cG}{{\ensuremath{\mathcal G}} }
\newcommand{\cH}{{\ensuremath{\mathcal H}} }
\newcommand{\cL}{{\ensuremath{\mathcal L}} }
\newcommand{\cN}{{\ensuremath{\mathcal N}} }
\newcommand{\cP}{{\ensuremath{\mathcal P}} }
\newcommand{\cY}{{\ensuremath{\mathcal Y}} }
\newcommand{\cZ}{{\ensuremath{\mathcal Z}} }
\newcommand{\bfb}{{\ensuremath{\mathbf b}} }
\newcommand{\bfm}{{\ensuremath{\mathbf m}} }
\newcommand{\bfG}{{\ensuremath{\mathbf G}} }
\newcommand{\bfK}{{\ensuremath{\mathbf K}} }
\newcommand{\bfQ}{{\ensuremath{\mathbf Q}} }
\newcommand{\bfR}{{\ensuremath{\mathbf R}} }
\newcommand{\bfX}{{\ensuremath{\mathbf X}} }
\newcommand{\bflambda}{{\ensuremath{\boldsymbol \lambda}}}
\newcommand{\bfmu}{{\ensuremath{\boldsymbol \mu}}}
\newcommand{\dd}{{\ensuremath{\mathrm d}} }
\newcommand{\dA}{{\ensuremath{\mathrm A}} }
\newcommand{\dB}{{\ensuremath{\mathrm B}} }
\newcommand{\dI}{{\ensuremath{\mathrm I}} }
\newcommand{\dL}{{\ensuremath{\mathrm L}} }
\newcommand{\R}{\mathbb{R}}
\newcommand{\N}{\mathbb{N}}
\renewcommand{\P}{\mathbb{P}}
\newcommand{\ind}{\ensuremath{\mathbf{1}}}
\newcommand{\one}{\ensuremath{\mathsf{1}}}
\DeclarePairedDelimiterX{\abs}[1]{\lvert}{\rvert}{#1}
\DeclarePairedDelimiterX{\norm}[1]{\lVert}{\rVert}{#1}
\renewcommand{\epsilon}{\varepsilon}
\let\temp\theta
\let\theta\vartheta
\let\vartheta\temp
\newcommand{\phivarphi}
{
\let\temp\phi
\let\phi\varphi
\let\varphi\temp
}
\theoremstyle{plain}
\newtheorem{theorem}{Theorem}[section]
\newtheorem*{theorem*}{Theorem}
\newtheorem{lemma}[theorem]{Lemma}
\newtheorem*{lemma*}{Lemma}
\newtheorem{proposition}[theorem]{Proposition}
\newtheorem*{proposition*}{Proposition}
\newtheorem{corollary}[theorem]{Corollary}
\newtheorem{assumption}[theorem]{Assumption}
\theoremstyle{definition}
\newtheorem{example}[theorem]{Example}
\theoremstyle{remark}
\newtheorem{remark}[theorem]{Remark}
\newtheorem*{remark*}{Remark}
\lbrace\begin{array}{@{}l@{}}}%
\numberwithin{equation}{section}
\newcommand{\mail}[1]{\href{mailto:#1}{\normalfont\texttt{#1}}}
\def\@setthanks{\vspace{-\baselineskip}\def\thanks##1{\@par##1\@addpunct.}\thankses}
\definecolor{verde}{rgb}{0,0.7,0}
\title[Stochastic filtering with path-dependent local characteristics]
	{Stochastic filtering of a pure jump process with predictable jumps and path-dependent local characteristics}
\author[E.~Bandini]{Elena Bandini\textsuperscript{\MakeLowercase{a},1}}
\thanks{\noindent \textsuperscript{a} Universit\`a di Bologna, Dipartimento di Matematica, 
Bologna (Italy).}
\author[A.~Calvia ]{Alessandro Calvia\textsuperscript{\MakeLowercase{b},2}}
\thanks{\noindent \textsuperscript{b} LUISS University, Department of Economics and Finance, Rome (Italy).}
\author[K.~Colaneri ]{Katia Colaneri\textsuperscript{\MakeLowercase{c},3}}
\thanks{\noindent \textsuperscript{c} University of Rome Tor Vergata, Department of Economics and Finance, Rome (Italy).
\\
\noindent \textsuperscript{1} E-mail: \mail{elena.bandini7@unibo.it}.
\\
\noindent \textsuperscript{2} E-mail: \mail{acalvia@luiss.it}.
\\
\noindent \textsuperscript{3} E-mail: \mail{katia.colaneri@uniroma2.it}
\\
\noindent This research was partially supported by the 2018 GNAMPA-INdAM project \textit{Controllo ottimo stocastico con osservazione parziale: metodo di randomizzazione ed equazioni di Hamilton-Jacobi-Bellman sullo spazio di Wasserstein}.}
\begin{document}

\phivarphi
	
\begin{abstract}
The objective of this paper is to study the filtering problem for a system
of partially observable processes (X, Y), where X is a non-Markovian pure jump process
representing the signal and Y is a general jump diffusion which provides observations.
Our model covers the case where both processes are not necessarily quasi left-continuous,
allowing them to jump at predictable stopping times. By introducing the Markovian version
of the signal, we are able to compute an explicit
equation for the filter via the innovations approach.
\end{abstract}
\maketitle
	
\noindent \textbf{Keywords:} Stochastic filtering, pure jump process,  jump-diffusion process, non quasi-left-continuous random measure,  path-dependent local characteristics.

\smallskip

\noindent \textbf{AMS 2020:} 60G35, 60G57, 60J60, 60J76.

\smallskip

\section{Introduction} \label{sec:intro}
In this paper we study a stochastic filtering problem for a partially observable system where the unobservable signal process is a pure-jump process, possibly non-Markovian, and takes values in a complete and separable metric space. The observation process is a jump-diffusion with local characteristics that depend on the signal. Our goal is to derive the conditional distribution of the signal process given the available information flow generated by the observation process, i.e., to compute the \emph{filter}. In other terms we aim to  write an explicit  evolution equation satisfied by the  filter, called \emph{filtering equation}. Our contribution is methodological in nature: while some examples are shown and some possible applications are briefly outlined, our main intent is to provide a general partially observed stochastic model and to rigorously compute the corresponding filtering equation. The partially observed system considered in this paper has the following two \textcolor{blue}{novel} features:
\begin{itemize}
\item A non-Markovian pair signal-observed process, which is realized by allowing the data of our model (i.e., local characteristics) to be path-dependent with respect to the signal process.
\item Predictable jump times for both the signal and the 
observation, to account for models where such jump times arise naturally (for instance, Piecewise Deterministic Processes or reflecting diffusions).
\end{itemize}

\subsection{Motivation and literature review}
Stochastic filtering in continuous-time is a classic branch of research in applied probability and has been studied under several choices of the pair signal-observation. Starting from the pioneering works of N.~Wiener and R.~E.~Kalman (with its celebrated \emph{Kalman filter}), research on this subject has vastly expanded in numerous directions and is still ongoing. A fairly detailed account of these developments can be found, for instance in \citet[Chapter 1]{bain:fundofstochfilt}. In this section we summarize the contributions that are more related to our paper.

One of the driving motivations to study stochastic filtering is that \emph{partially observed systems} naturally arise in fields like engineering, operations research, economics, and finance. These stochastic models are characterized by a lack of information (so called partial information or partial observation), as some of the quantities involved in the model itself are not directly measurable and need to be estimated from the observable quantities.

From a technical perspective, various situations have been considered so far in the literature: linear and nonlinear, finite and infinite dimensional filters (even on manifolds), numerical schemes, and so on.
One of the most ubiquitous assumptions in this literature is that the pair signal-observation solves a martingale problem for some infinitesimal generator, implying that it is a Markov process.

This fundamental assumption allows to obtain an {\em explicit} filtering equation. 
Such characterization of the filter has important implications. First, it is key to estimate statistics of the unobserved process. Second, although in general filtering equations do not admit solutions in closed form, one can  resort to numerical schemes to approximate such solutions,  which require at least explicit filtering equations, see e.g. \citet{bain:fundofstochfilt}, \citet{damian2018algorithm}.
 Third, to solve optimal control problems with partial observation, one usually needs to have an explicit filtering equation and to apply the so-called \emph{separation principle}, which enables to switch from the original optimization problem under partial information to an equivalent optimal control problem with complete information, where the hidden state  is replaced by the filter, see, e.g., \citet{calvia2018optimal,altay:pairstrading, colaneri:optliquid, calvia2022Nonlinear}.
 Under the Markovianity hypothesis, several filtering problems have been addressed. 
Partially observable systems with jump-diffusion observation are studied, for instance, in \citet{grigelionis2011, cecicolaneri:zakai} and the cases of  pure-jump signal are discussed, for instance, by \citet{calvia:filtcontrol}, \citet{ceci2000filtering,ceci2001nonlinear}.

In contrast to the classical setting, in a non-Markovian context results available in the literature appear to be more abstract in nature. Although general equations for this case are available, see, e.g., \citet{szpirglas1979modele,kallianpur1980:stochfilt,liptsershiryaev2001:statistics},
 in these works the partially observable system is modeled via an abstract semimartingale decomposition, and consequently,
the underlying partially observed models,
and hence the filtering equation, are not explicit.

General non-Markovian partially observed models have potential valuable applications. In fact, there are interesting and well-known problems where state variables depend on (part of) the history of some non-directly observable stochastic process, and not exclusively on its current value. They arise, for instance, in models with delay or when dealing with exotic options in finance and in various optimal control problems with partial observation (see, e.g., \citet{bandini2018:randBSDE,bandini:randomizHJBwasserstein,tang1998:smp}).


In light of this, our first contribution is to set a partially observed model where the pair signal-observation can be non-Markovian and to compute an explicit filtering equation in this context. The lack of the Markov property is due to dependence of our modeling data (i.e., the local characteristics) on the path of the signal process. The signal is a pure-jump process which is allowed to take values in a fairly general state space (metric, complete, and separable). This is in contrast with the usual setting where the state space of the signal is either discrete and finite or Euclidean.


In our model the pair signal-observation has common jumps times, which may also be \emph{predictable}. To best of our knowledge, this feature has not been yet considered in the literature, where the common assumption is to take only \emph{totally inaccessible}  jump times (cf. the definitions in \citet[Chapter I]{jacod2013:limit}, see also Section \ref{sec:prelim}).
Nevertheless, predictable jumps naturally appear in various stochastic models and even in the definition of some classes of processes, as in the well-known case of Piecewise Deterministic Processes, introduced by M.~Davis \cite{davis:markovmodels}. In the financial literature, for instance, recent works discussed modelization of real financial markets presenting critical announced random dates, typically related to possible situations of default (see, e.g., \citet{fontana2018:termstruct, jiao2015:gendens} and discussions therein) or to information delivered at a priori established dates (e.g., central banks updates of interest rates).



Let us now briefly outline some of the details of our analysis.

\subsection{Main results and structure of the paper}
We consider a pair of stochastic processes $(X,Y) = (X_t, Y_t)_{t \geq 0}$, respectively called the \emph{signal process} and the \emph{observed process}, on some  probability space \color{blue}endowed with a global filtration $\bbF=(\cF_t)_{t \geq 0}$. Processes $X$ and $Y$ are $\bbF$-adapted; however, the available information is only given by the subfiltration $\bbY = (\cY_t)_{t \geq 0}$ which is  the completed and right-continuous natural filtration of $Y$. This means that the signal is not observable, and the information on the signal can only be retrieved through the process $Y$ whose dynamics depends, more or less directly, on $X$.
\normalcolor
We assume that $X$ is a pure-jump process, possibly non Markovian, described in terms of a random counting measure (see, e.g., \citet{bremaud:pp, jacod:mpp, jacod2013:limit}). In contrast to most of the existing literature, we do not assume that the dual predictable projection, or \emph{compensator}, of $X$ is \mbox{quasi-left} continuous and we allow for predictable jumps.
Our objective is to characterize the filter $\pi = (\pi_t)_{t \geq 0}$, 
\color{blue}which permits to describe the conditional distribution of the path of $X$ up to time $t$, given the available information $\cY_t$. 
Therefore, to compute it we need to consider functionals depending on the history of the signal process, which is another novelty of our paper.

Due to the lack of Markovianity of the signal, to solve the filtering problem 
we construct  the \emph{history process} $\bfX$.
\normalcolor This is an auxiliary process which keeps track of all past values and jump times of $X$ and has the advantage of being Markovian with respect to its natural filtration. 
\color{blue} Moreover, such process shares the same pure jump nature as the original signal, and it is fully determined by its local characteristics, which can be computed in terms of those of the signal (see Proposition \ref{prop:bfmudualpredproj}).
\normalcolor
The existence of a bijective function that maps $X$ into $\bfX$ and vice versa (see Proposition~\ref{P_S}), permits to characterize the filter $\pi$ by addressing an equivalent filtering problem 
\color{blue}where the conditional distribution of the history process $\bfX$ is derived (see Lemma \ref{L:4.1}).

In this paper, we consider a quite general observation process $Y$  which follows a jump-diffusion, whose local characteristics depend on the trajectory of the signal $X$, and may also have 
predictable jumps. We allow for the signal and the observation to have common (both predictable and totally inaccessible) jump times. Having such structure for the observation process permits us to model several 
 information
flows. For instance, if $X$ and $Y$ have only common jump times, then the filter would be very informative and able to detect all jump times of the signal, although its positions would still be unknown. Considering the case where $Y$ has also disjoint jump times and a diffusion component, brings additional noise, and the filter is not necessarily able to identify all the 
jumps of the signal.
\normalcolor

To solve the filtering problem we resort to the innovations approach, see, e.g. \cite{bain:fundofstochfilt, bremaud:pp, kallianpur1980:stochfilt}. This is a classical technique in filtering theory which is particularly convenient when signal and observation have common jump times.
Although the idea of the innovations approach is well known, it is anything but easy to apply it in our setting, 
\color{blue}due to the generality of our partially observable system.
\normalcolor

The paper is organized as follows. This introduction concludes with a brief paragraph on the notations and conventions adopted here.
\color{blue}In Section \ref{sec:model} we describe the partially observable system that we intend to analyze and we give all the assumptions. 
\normalcolor 
In Section \ref{sec:markov} we introduce the history process, explain why it is useful to do so, and  write the model previously described in this new setting. \color{blue} The filter is introduced in  Section \ref{sec:filtprob} and \normalcolor
 in Section \ref{sec:martrepr} we provide the martingale representation theorem with respect to the filtration generated by the observed process, that is fundamental to derive the filtering equation. The latter is computed in Section \ref{sec:filter}. \color{blue} Finally, three illustrative examples are collected in Section \ref{sec:examples}. \normalcolor 
  The proofs of some results stated in the paper are gathered in Appendix.  

\subsection{Notation}\label{sec:notation}
In this section we collect the main notation used in the  paper.

Throughout the paper the set $\N$ denotes the set of natural integers $\N = \{1, 2, \dots \}$ and  $\N_0= \N \cup \{0\}$. 

We indicate by $\cN$ the collection of null sets in some specified probability space.

The symbol $\ind_C$ denotes the indicator function of a set $C$, while $\one$ is the constant function equal to $1$. The symbol $\int_a^b$ denotes $\int_{(a,b]}$ for any $-\infty < a \leq b < +\infty$.

For a fixed metric space $E$, we denote by $d_E$ its metric and by $\dB_b(E)$ the set of real-valued bounded measurable functions on $E$. The symbol $\cB(E)$ indicates the Borel $\sigma$-algebra on $E$ and we denote by $\cP(E)$ the set of probability measures on $E$. The set of $E$-valued c\`adl\`ag functions on $[0,+\infty)$ is denoted by $\cD_E$. We always endow it with the Skorokhod topology and the Borel $\sigma$-algebra. The set $\widetilde \cD_E \subset \cD_E$ contains all trajectories in $\cD_E$ that are piecewise constant and do not exhibit explosion in finite time (i.e., if $(t_n)_{n \in \N} \subset (0,+\infty]$ is the collection of discontinuity points of some trajectory, then $\lim_{n \to \infty} t_n = +\infty$).

{\color{blue}
For any given $E$-valued c\`adl\`ag stochastic process $\eta = (\eta_t)_{t \geq 0}$ defined on a probability space $(\Omega, \cF, \bbP)$, we denote by $(\eta_{t^-})_{t \geq 0}$ the left-continuous version of $\eta$ (i.e., $\eta_{t^-} = \lim_{s \to t^-} \eta_s, \, \bbP$-a.s., for any $t \geq 0$). The notation $\eta_{t \land \cdot}$ indicates the path of process $\eta$ stopped at time $t \geq 0$, i.e., $(\eta_{t \land \cdot})_{t \geq 0}$ is the $\cD_E$-valued stochastic process such that, for any $t \geq 0$, $\eta_{t \land \cdot} = \{s \mapsto \eta_{t \land s}\}_{s \geq 0}$. If $\eta$ is real-valued, that is $E = \R$, we denote by $\Delta \eta_t \coloneqq \eta_t - \eta_{t^-}$ the jump size of $\eta$ at time $t \geq 0$.
}

Finally, with the word measurable we refer to Borel-measurable, unless otherwise specified.

\section{The model} \label{sec:model}
{\color{blue}The aim of this section is to introduce the model for the partially observed system that we aim to study, which is composed of a pair of processes, respectively called the signal and the observed process. Recall that our objective is to derive the conditional distribution of the signal process given the information provided by the observed process. Put in other words, we aim to characterize the dynamics of the filter, which is rigorously defined in~\eqref{eq:pidef}, equivalently to provide the filtering equation.}

\subsection{Preliminaries}\label{sec:prelim}
Throughout the paper we are given a complete and separable metric space $E$, that serves as the state space of the signal process.
We endow it with its Borel $\sigma$-algebra $\cB(E)$ and we denote its metric by $d_E$. We indicate by $\cD_E$ the set of $E$-valued c\`adl\`ag functions on $[0,+\infty)$.
{\color{blue}We also consider an auxiliary Lusin space $(Z, \cZ)$.}

We are also given a   probability space $(\Omega, \mathcal F,  \bbP)$ on which  is defined  a complete right-continuous filtration $\bbF \coloneqq (\cF_t)_{t \geq 0}$.  This filtered probability space carries
a pair of \textcolor{blue}{continuous-time} stochastic processes $\color{blue}(X,Y)= (X_t, Y_t)_{t \geq 0}$, where $X$ {\color{blue}is the signal process, which is not directly observable, and $Y$ is the observed process, which provides noisy information about the signal.

We denote by $\bbY=(\cY_t)_{t \geq 0}$ the complete natural filtration of the process $Y$, that is $\cY_t=\sigma(Y_s, \ 0\leq s \leq t)\vee \cN$, where $\cN$ indicates the collection of $\P$-null sets. 
We assume that $\bbY$ is right-continuous, otherwise we consider the right-continuous enlargement of $(\cY_t)_{t \geq 0}$,  still denoted by $\bbY$ (see, e.g., \citep{bain:fundofstochfilt, KurtzOcone}); it contains all  available information that we can use to infer the distribution signal process $X$.

We  recall the following definitions. 
A stopping time $\tau$ on $(\Omega, \mathcal F, \bbF, \bbP)$ is said to be predictable if and only if there is a sequence of stopping times $(\tau_n)_{n \in \mathbb{N}}$ increasing to $\tau$, called an announcing sequence of $\tau$, such that $\tau_n<\tau$, $\mathbb{P}$-a.s., on $\{\tau>0\}$.
A stopping time $\tau$  is called totally inaccessible if $\mathbb{P}(\tau = \eta < + \infty)=0$
 for any predictable time $\eta$. 
For further details on these concepts see for instance \citep[Chapter~I, Sections~2b and~2c, pages 19-20]{jacod2013:limit}.  
\normalcolor

\subsection{The partially observed system}\label{sec:modelassumption}
{\color{blue}In this section we introduce assumptions on the partially observed system $(X,Y)$, under which
\begin{itemize}
\item 
the process $X$ is an $E$-valued $\bbF$-adapted \emph{pure-jump} process with c\`{a}dl\`{a}g paths;
\item the process $Y$ is a \mbox{real-valued} $\bbF$-adapted \mbox{\emph{jump-diffusion}} process with c\`{a}dl\`{a}g paths and common jumps times with the process $X$.
\end{itemize}

Let $(T_k)_{k \geq 1}$ be a sequence of $\mathbb F$-stopping times and  $(\zeta_k)_{k \geq 0}$ be a sequence of $E$-valued random variables, defined on $(\Omega, \cF, \bbP)$, such that (see, e.g., \citet{boel_varaiya_wong}):
\begin{enumerate}[(i)]
\item $T_k \leq T_{k+1}$ and, on $\{T_k < +\infty\}$, $T_k < T_{k+1}$, for all $k \geq 1$;
\item $\zeta_k$ is $\cF_{T_k}$-measurable, for each $k \geq 0$.
\end{enumerate}
The we define the process $X$ as follows
\begin{equation}\label{eq:Xrepr}
X_t=\zeta_0 \ind_{[0,T_1)}(t) + \sum_{k=1}^\infty \zeta_n \ind_{[T_k,T_{k+1})}(t), \quad t \geq 0.
\end{equation}
We can always associate to $X$ the random counting measure $m$ on $(0,+\infty) \times E$ defined as
\begin{equation}\label{eq:m}
	m\big ((0, t] \times A\big ) = \sum_{k =1}^\infty 1_{T_k \leq t} \, 1_{X_{T_k} \in A}, \quad t \geq 0, \, A \in \mathcal B(E),
\end{equation}
with $\bbF$-dual predictable projection $\mu$ (see, e.g., \cite[Chapter II.1]{jacod2013:limit}, \cite[Chapter VIII]{bremaud:pp}).

We suppose, in addition, that the filtered probability space $(\Omega, \cF, \bbF, \bbP)$ supports a real-valued, $\bbF$-adapted standard Brownian motion $W$ and an $\bbF$-adapted random counting measure $n$ on $(0,+\infty) \times Z$, where $Z$ is the auxiliary Lusin space (see Section~\ref{sec:prelim}), with $\bbF$-dual predictable projection $\nu$.

Next, we introduce the following measurable functions
\begin{align}
&b \colon [0,+\infty) \times \cD_E \times \R \to \R,\qquad
&\sigma \colon [0,+\infty) \times \R \to (0,+\infty), \label{eq:bsigma}
\\
&K^i \colon [0,+\infty) \times \cD_E \times \R \times E \to \R,\qquad
&K^p \colon [0,+\infty) \times \cD_E \times \R \times E \to \R, \label{eq:K}
\\
&G^i \colon [0,+\infty) \times \cD_E \times \R \times Z \to \R,\qquad
&G^p \colon [0,+\infty) \times \cD_E \times \R \times Z \to \R, \label{eq:G}
\end{align}
and consider the following SDE with $X$ given in \eqref{eq:Xrepr}
\begin{equation}\label{eq:Ysde}
\left\{
\begin{aligned}
	\dd Y_t &= b(t, X_{t \wedge \cdot}, Y_t) \, \dd t + \sigma(t, Y_t) \, \dd W_t \\
	&+ \int_E K^i(t, X_{t^-\wedge \cdot}, Y_{t^-}, e) \, m^i(\dd t \, \dd e) + \int_E K^p(t, X_{t^-\wedge \cdot}, Y_{t^-}, e) \, m^p(\dd t \, \dd e) \\
	& + \int_Z G^i(t, X_{t^-\wedge \cdot}, Y_{t^-}, z) \, n^i(\dd t \, \dd z) + \int_Z G^p(t, X_{t^-\wedge \cdot}, Y_{t^-}, z) \, n^p(\dd t \, \dd z), \\
	Y_0 &= y_0 \in \R,
\end{aligned}
\right.
\end{equation}
where the measures $m^i, m^p, n^i, n^p$ are defined as 
\begin{align}
m^i(\dd t \, \dd e) &\coloneqq \ind_{(D^m \setminus J^m) \times E}(t,e) \, m(\dd t \, \dd e),
&
m^p(\dd t \, \dd e) &\coloneqq \ind_{J^m \times E}(t,e) \, m(\dd t \, \dd e),
\label{eq:mimp} \\
n^i(\dd t \, \dd e) &\coloneqq \ind_{(D^n \setminus J^n) \times Z}(t,z) \, n(\dd t \, \dd z),
&
n^p(\dd t \, \dd z) &\coloneqq \ind_{J^n \times Z}(t,z) \, n(\dd t \, \dd z),\label{ninp}
\end{align}
with
\begin{align}
	D^m &\coloneqq \{(\omega, t) \in \Omega \times (0,+\infty) \colon m(\omega; \, \{t\} \times E) = 1\}, \label{eq:Dm}\\
	J^m &\coloneqq \{(\omega, t) \in \Omega \times (0,+\infty) \colon \mu(\omega; \, \{t\} \times E) > 0\}, \label{eq:Jm} \\
	D^n &\coloneqq \{(\omega, t) \in \Omega \times (0,+\infty) \colon n(\omega; \, \{t\} \times Z) = 1\}, \label{eq:Dn}\\
	J^n &\coloneqq \{(\omega, t) \in \Omega \times (0,+\infty) \colon \nu(\omega; \, \{t\} \times Z) > 0\}. \label{eq:Jn}
\end{align}

\begin{assumption}\label{hp:jointlaw}
The partially observed system $(X,Y)$ satisfies the following properties.
\begin{enumerate}[(i)]
\item\label{hp:Yunique} $Y$ is the unique solution to SDE \eqref{eq:Ysde}.
\item \label{hp:m_n_disjoint} Random measures $m$ and $n$ do not have common jump times.
\item\label{hp:predproj} The $\bbF$-dual predictable projections $\mu$ and $\nu$ of random counting measures $m$ and $n$, respectively, are of the form
\begin{align}
	\mu((0,t] \times A) &= \int_0^t \int_A  Q^m(s,X_{s^- \wedge \cdot}; \dd e) \, \lambda^m(s, X_{s^- \wedge \cdot}) \, \dd s
	\\
	& + \int_0^t \int_A R^m(s,X_{s^- \wedge \cdot}; \dd e) \, \dd p^m_s, \quad t > 0, \, A \in \cB(E), \label{eq:mu} \\
	\nu((0,t] \times B) &= \int_0^t \int_B  Q^n(s, X_{s^-\wedge \cdot}, Y_{s^-}; \dd z)\, \lambda^n(s, X_{s^-\wedge \cdot}, Y_{s^-}) \, \dd s\notag\\
	& +\int_0^t \int_B R^n(s, X_{s^-\wedge \cdot}, Y_{s^-}; \dd z) \, \dd p^n_s, \quad t > 0, \, B \in \cZ,\label{eq:nu}
\end{align}
where
\begin{align}
	&Q^m \colon (0,+\infty) \times \cD_E \to \cP(E), \quad Q^n \colon (0,+\infty) \times \cD_E \times \R \to \cP(Z), \label{eq:Q} \\
	&R^m \colon (0,+\infty) \times \cD_E \to \cP(E), \quad R^n \colon (0,+\infty) \times \cD_E \times \R \to \cP(Z), \label{eq:R}
\end{align}
are probability transition kernels,
\begin{align}
\lambda^m \colon (0,+\infty) \times \cD_E \to [0,+\infty),\qquad
\lambda^n \colon (0,+\infty) \times \cD_E \times \R \to [0,+\infty) \label{eq:lambda}
\end{align}
are measurable functions verifying
\begin{align}
	&\sup_{(t, x) \in (0,+\infty) \times \cD_E} \lambda^m(t,x) < +\infty,\label{eq:lambdambdd} \\
	&\sup_{(t, x, y) \in (0,+\infty) \times \cD_E \times \R} \lambda^n(t,x,y) < +\infty\label{eq:lambdanbdd},
\end{align}
and
 $p^m \coloneqq (p^m_t)_{t \geq 0}$ and $p^n \coloneqq (p^n_t)_{t \geq 0}$ are $\bbY$-predictable counting processes
 satisfying, for all $t \geq 0$,
\begin{equation}\label{eq:ptmbdd}
\bbE[p^m_t] < +\infty, \qquad \bbE[p^n_t] < +\infty.
\end{equation}

\end{enumerate}
\end{assumption}




Whenever possible, to ease the notation, we use the following abbreviation for the coefficients of SDE~\eqref{eq:Ysde}. We set, for any $t \geq 0$,
\begin{align*}
	b_t &\coloneqq b(t, X_{t \wedge \cdot}, Y_t),
	&
	\sigma_t &\coloneqq \sigma(t, Y_t),
	\\
	K^i_t(\cdot) &\coloneqq K^i(t, X_{t^-\wedge \cdot}, Y_{t^-}, \cdot),
	& 	
	K^p_t(\cdot) &\coloneqq K^p(t, X_{t^-\wedge \cdot}, Y_{t^-}, \cdot),	
	\\
	G^i_t(\cdot) &\coloneqq G^i(t, X_{t^-\wedge \cdot}, Y_{t^-}, \cdot),
	& 	
	G^p_t(\cdot) &\coloneqq G^p(t, X_{t^-\wedge \cdot}, Y_{t^-}, \cdot),
\end{align*}
and therefore we can write SDE~\eqref{eq:Ysde} as
\begin{equation*}
\left\{
\begin{aligned}
	\dd Y_t &= b_t \, \dd t + \sigma_t \, \dd W_t + \int_E K^i_t(e) \, m^i(\dd t \, \dd e) + \int_E K^p_t(e) \, m^p(\dd t \, \dd e) \\
	& + \int_Z G^i_t(z) \, n^i(\dd t \, \dd z) + \int_Z G^p_t(z) \, n^p(\dd t \, \dd z), \\
	Y_0 &= y_0 \in \R.
\end{aligned}
\right.
\end{equation*}
To ensure that Assumption~\ref{hp:jointlaw}-(i) is verified, i.e. that SDE~\eqref{eq:Ysde} has a unique solution, one can consider several families of conditions. The interested reader may consult, for instance, \citep{jacod:calcul, oksendal:stochastic, protter:sde}.
Instead, to guarantee integrability, we ask the following.

\normalcolor

\begin{assumption}\label{hp:sdeexistuniq}
\mbox{}
Coefficients $b$, $\sigma$, $K^i$, $K^p$, $G^i$, $G^p$, given in~\eqref{eq:bsigma}, \eqref{eq:K}, \eqref{eq:G}
 satisfy, for all $t \geq 0$,
\begin{gather}
	\bbE\biggl[\int_0^t \abs{b_s} \, \dd s\biggr] < +\infty, \qquad \bbE\biggl[\int_0^t \sigma_s^2 \, \dd s\biggr] < +\infty,
	\\
	\bbE\biggl[\int_0^t \!\!\int_E \abs{K^i_s(e)} \, Q^m(s, X_{s^-\wedge \cdot}; \dd e)\dd s + \int_0^t \!\!\int_E \abs{K^p_s(e)} \, R^m(s, X_{s^-\wedge \cdot}; \dd e) \dd p^m_s\biggr] < +\infty,\\
\bbE\biggl[\int_0^t \!\!\int_Z \abs{G^i_s(z)} \, Q^n(s, X_{s^-\wedge \cdot}, Y_{s^-}; \dd z) \dd s + \int_0^t \!\!\int_Z \abs{G^p_s(z)} \, R^n(s, X_{s^-\wedge \cdot}, Y_{s^-}; \dd z) \dd p^n_s\biggr] < +\infty.
\end{gather}
\end{assumption}

\normalcolor

\begin{remark}
\label{rem:m_noexplosion}
\mbox{}
\begin{enumerate}
\item Conditions \eqref{eq:lambdambdd}, \eqref{eq:lambdanbdd}, and~\eqref{eq:ptmbdd} guarantee that random measures $m$ and $n$ are $\bbP$-a.s. non-explosive (see, e.g., \citep[Prop. 24.6]{davis:markovmodels}), i.e., jump times of $m$ and $n$ do not have a finite accumulation point. Said otherwise, in each compact time-interval $[0,t]$, for any $t \geq 0$, random measures $m$ and $n$ have only a finite number of jumps. 
\item 
\color{blue} Random sets $D^m$ and  $D^n$  in  \eqref{eq:Dm},  \eqref{eq:Dn}  are the supports of $m$ and $n$. \normalcolor 
Moreover  $J^m$ and  $J^n$ in \eqref{eq:Jm}, \eqref{eq:Jn} are the $\bbF$-predictable supports of $D^m$ and $D^n$, respectively, see Definition 2.32, Chapter I and Proposition 1.14, Chapter II, in \cite{jacod2013:limit}.
Under \eqref{eq:mu} and \eqref{eq:nu} the sets $J^m$, $J^n$, correspond to
\begin{align}
J^m  &= \{(\omega, t) \in \Omega \times (0,+\infty) \colon p^m_t(\omega)-p^m_{t-}(\omega) =1\}, \label{eq:Jm_bis} \\
J^n  &= \{(\omega, t) \in \Omega \times (0,+\infty) \colon p^n_t(\omega)-p^n_{t-}(\omega) =1\}. \label{eq:Jn_bis}
\end{align}
Then, standard results on random measures (see, e.g. Theorem 11.14
in \citet{he:semimart}) ensure that, in this case, $J^m \subset D^m$ and $J^n \subset D^n$. 
\qed
\end{enumerate}
\end{remark}

\begin{remark}\label{R:2.2}
 
\color{blue}
The process $Y$ is a fairly general jump-diffusion with some interesting features. First of all, its discontinuous part
accounts for jump times  that are in common with $X$,
 triggered by the random measure $m$,
 and additional jumps, described via the random measure $n$, that are disjoint from those of $X$, see Assumption \ref{hp:jointlaw}-\eqref{hp:m_n_disjoint}.
Second, random measures $m^i$, $n^i$ (resp. $m^p$,  $n^p$)  in SDE~\eqref{eq:Ysde} 
separate the contributions  
of  totally inaccessible (resp. $\bbY$-predictable) jump times of the process $Y$.
 In particular, $m^p$ (resp. $n^p$) accounts for $\bbY$-predictable jumps of $m$ (resp. $n$), while  $m^i$ (resp. $n^i$) accounts for totally inaccessible jumps of $m$ (resp. $n$). \qed
\end{remark}
\normalcolor

\begin{remark}
The dynamics of $Y$ given in~\eqref{eq:Ysde}, can also be written in a more compact notation as
\begin{equation}\label{eq:Ysde2}
	\left\{
	\begin{aligned}
		&\dd Y_t =b_t \, \dd t + \sigma_t \, \dd W_t + \int_\R y \, m^Y(\dd t \, \dd y),	& &t \geq 0, \\
		&Y_0 = y_0,
	\end{aligned}
	\right.
\end{equation}
where  $m^Y$  is the random counting measure associated to all jumps of $Y$,  defined as
\begin{equation}\label{mY}
m^Y\bigl((0,t] \times B\bigr) = \displaystyle \sum_{0 < s \leq t \colon \Delta Y_s \neq 0} \ind_{\Delta Y_s \in B} \quad \textup{for all}\,\, t \geq 0, \, B \in \cB(\R).
\end{equation}
It is worth noting that, for all $t \geq 0$ and any measurable function $g \colon \R \to \R$, it holds that 
\begin{multline}\label{eq:gintegral}
\int_0^t \int_\R g(y) \, m^Y(\dd t \, \dd y)
\\
= \int_0^t \int_E \ind_{K^i_s(e) \neq 0} \, g\bigl(K^i_s(e)\bigr) \, m^i(\dd s \, \dd e) + \int_0^t \int_E \ind_{K^p_s(e) \neq 0} \, g\bigl(K^p_s(e)\bigr) \, m^p(\dd s \, \dd e)
\\
+ \int_0^t \int_Z \ind_{G^i_s(z) \neq 0} \, g\bigl(G^i_s(z)\bigr) \, n^i(\dd s \, \dd z) + \int_0^t \int_Z \ind_{G^p_s(z) \neq 0} \, g\bigl(G^p_s(z)\bigr) \, n^p(\dd s \, \dd z),
\end{multline}
{\color{blue}
where $\ind_{K^i_s(e) \neq 0}$ (resp. $\ind_{K^p_s(e) \neq 0}$, $\ind_{G^i_s(z) \neq 0}$, $\ind_{G^p_s(z) \neq 0}$) denotes the indicator function of the (random) set $\{e \in E \colon K^i(s, X_{s \land \cdot}, Y_s, e) \neq 0\}$ (resp. $\{e \in E \colon K^p(s, X_{s \land \cdot}, Y_s, e) \neq 0\}$, $\{z \in Z \colon G^i(s, X_{s \land \cdot}, Y_s, z) \neq 0\}$, $\{z \in Z \colon G^p(s, X_{s \land \cdot}, Y_s, z) \neq 0\}$).
}
\qed
\end{remark}

\subsection{Comments}\label{sec:comments}
\color{blue}
We now make a brief overview of the characteristics and the main points of novelty of this work.
\normalcolor

In our setting, path-dependence with respect to the signal process is allowed both in the coefficients of SDE~\eqref{eq:Ysde} and in the local characteristics of random measures $m$ and $n$, as specified by their $\bbF$-dual predictable projections $\mu$ and $\nu$, given in~\eqref{eq:mu} and~\eqref{eq:nu}.
This represents an interesting feature from the point of view of applications, since it allows to model phenomena in which the behavior of the system shows a dependence on the past of the signal process. Such situations arise, for instance, due to memory effects or delays. From a theoretical point of view including path-dependence raises challenging issues, since classic filtering tools, used in Markovian frameworks, are either not suited or not developed to treat non-Markov situations.

Another key feature of our model is the presence of the $\bbY$-predictable counting processes $p^m$ and $p^n$ in the structure of the compensators $\mu$ and $\nu$, respectively. To the best of our knowledge, this case has not been analyzed in the literature so far. Existing filtering results of jump processes deal with the case where all jump times  of  $X$ and $Y$ are totally inaccessible, i.e., the case of a \emph{quasi-left continuous} compensator. Our setting, instead, covers also models where predictable jumps may occur in both  $X$ and $Y$.
\color{blue}
Counting processes $(p^m_t)_{t \geq 0}$  and $(p^n_t)_{t \geq 0}$, given in \eqref{eq:ptmbdd}, are assumed to be $\bbY$-predictable.
This is a key condition that permits to characterize explicitly the dual predictable projections of measures $m$ and $n$ with respect to the subfiltration $\bbY$. Indeed, when projecting on a smaller filtration, predictable jump times may change their nature.
\normalcolor This makes very difficult to compute the $\bbY$-compensators of measures  $(p^m_t)_{t \geq 0}$  and $(p^n_t)_{t \geq 0}$, unless they are already $\bbY$-predictable (see, e.g., the discussion after Theorem 2.1 in \citet{calzolaritorti:acctime} or Counterexample 4.8 in \citet{ditellajeanblanc:mrp}). }




\allowdisplaybreaks

\section{The Markovianization procedure}\label{sec:markov}
\color{blue}
In this section  we construct the \emph{history process} $\bfX = (\bfX_t)_{t \geq 0}$ through a \emph{Markovianization procedure}. The  process $\bfX$ is tightly linked with the signal $X$ and, most importantly, to its stopped trajectory $(X_{t \land \cdot})_{t \geq 0}$.

To  characterize the dynamics of the filter one typically follows two steps:  write, first,  the $\bbF$-semimartingale representation of the process $\bigl(\phi(X_{t \land \cdot})\bigr)_{t \geq 0}$, with  $\phi \colon \cD_E \to \R$ being  a bounded a measurable function; then,   derive its optional projection with respect to the observation filtration $\bbY$. 
\normalcolor
While, in principle, this is possible, thanks to the fact that the stopped process $(X_{t \land \cdot})_{t \geq 0}$ is a $\cD_E$-valued pure jump process (hence admitting a description through a random counting measure with some associated $\bbF$-compensator), in practice this is not an easy task because 
\begin{itemize}
\item[a)] One should be able to deduce the law of the stopped process, or equivalently to compute its $\bbF$-dual predictable projection, from the law of the signal process $X$ and this law should be expressed \emph{via} some probability transition kernel on $\cD_E$, which is a rather abstract object.
\item[b)] Models based on pure-jump processes are specified by heavily relying on the their underlying discrete time structure. This in turn means that jump intensity and a probability transition kernel depend on all possible sequences of past jump times and positions occupied by the pure-jump process itself.
\end{itemize}
\color{blue}
Introducing the history process allows to take these aspects into account.
\normalcolor 

We denote by $\delta$ some point outside $E$ and let $H$ be the state space of the history process, defined as the set of all sequences $h = (e_0, t_1, e_1, \dots)$, with $e_0 \in E$, $(e_n)_{n \in \N} \subset E \cup \{\delta\}$, and $(t_n)_{n \in \N} \subset (0,+\infty]$, satisfying, for all $n \in \N$,
\begin{enumerate}[1.]
\item $t_n \leq t_{n+1}$;
\item $t_n < +\infty \Longrightarrow t_n < t_{n+1}$;
\item $t_n = +\infty \Longleftrightarrow e_n = \delta$.
\end{enumerate}
We can introduce a metric on $H$ (see Appendix \ref{Section_Appendix_2}), that makes it a complete and separable metric space.

We define, also, the map $N \colon H \times (0,+\infty) \to \N_0$
\begin{equation}\label{eq:Nt}
N(h,t) = N(e_0, t_1, e_1, \dots, t) =
\begin{dcases}
\sup\{n \in \N \colon t_n < t\}, &\text{if } \{\cdots\} \neq \emptyset, \\
0,	&\text{otherwise,}
\end{dcases}
\quad h \in H, t \in (0,+\infty),
\end{equation}
that provides the number of jumps of a trajectory $h \in H$ up to time $t > 0$. Similarly, we define $N \colon H \to \N_0 \cup \{\infty\}$ as
\begin{equation}\label{eq:N}
N(h) = N(e_0, t_1, e_1, \dots) =
\begin{dcases}
\sup\{n \in \N \colon t_n < +\infty\}, &\text{if } \{\cdots\} \neq \emptyset, \\
0,	&\text{otherwise,}
\end{dcases}
\quad h \in H,
\end{equation}
which gives the total number of jumps of a trajectory $h \in H$.

Let us define for all $n \in \N$ the random sequences
\begin{align*}
	\xi_0 &\coloneqq (X_0, +\infty, \delta, +\infty, \delta, \dots);	&	\xi_n &\coloneqq (X_0, T_1, X_{T_1}, \dots, T_n, X_{T_n}, +\infty, \delta, +\infty, \delta, \dots),
\end{align*}
where $(T_n)_{n \in \N}$ is the sequence of jump times of $X$, given in~\eqref{eq:Xrepr}.
Finally, we set $\bfX \coloneqq (\bfX_t)_{t \geq 0}$ as
\begin{equation}
	\bfX_t \coloneqq
	\left\{
	\begin{aligned}
		&\xi_0,	&	&t \in [0, T_1), \\
		&\xi_n,	&	&t \in [T_n, T_{n+1}), \, n \in \N.
	\end{aligned}
	\right.
\end{equation}
This is a stochastic process taking values in $H$ whose purpose is to memorize all the past values and jump times of process $X$, hence the name \emph{history process}.
Moreover, it is a \mbox{pure-jump} process, with associated random counting measure
\begin{equation}\label{m}
	\bfm((0, t] \times B) \coloneqq \sum_{n \in \N} \ind_{T_n \leq t} \, \ind_{\xi_n \in B}, \quad t \geq 0, \, B \in \cB(H).
\end{equation}

We now reformulate our stochastic filtering problem by {\em replacing}, in a suitable way, the (unobservable) signal process with the history process. To do this, it is crucial to find a bijective correspondence between the stopped process $(X_{t \land \cdot})_{t \geq 0}$ and the history process $\bfX$. This is established in the next proposition.

\begin{proposition}\label{P_S}
Let $\widetilde \cD_E \subset \cD_E$ be the set of $E$-valued piecewise-constant c\`adl\`ag paths with no explosion in finite time.

There is one-to-one and onto correspondence between trajectories in  $\widetilde \cD_E$ and points in  $H$, namely there exists a bijective map
	 \begin{align}\label{S}
	 S \colon H \to \widetilde \cD_E.
	 \end{align}	
Moreover, $S$ and $S^{-1}$ are measurable and under~\eqref{eq:lambdambdd} and~\eqref{eq:ptmbdd} it holds that 
\begin{equation*}
X_{t \land \cdot} = S(\bfX_t), \quad \bfX_t = S^{-1}(X_{t \land \cdot}), \quad \bbP\text{-a.s.}, \, t \geq 0.
\end{equation*}
\end{proposition}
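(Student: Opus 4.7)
The plan is to construct $S$ and its inverse explicitly from the discrete structure already encoded in the definition of $H$, then verify bijectivity, measurability, and finally identify the resulting map with the stopped trajectory of the signal.

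\textbf{Construction of $S$ and its inverse.} Given $h=(e_0,t_1,e_1,t_2,e_2,\dots)\in H$, I would set
\begin{equation*}
S(h)(t) \coloneqq e_0\,\ind_{[0,t_1)}(t) + \sum_{n\in\N:\,t_n<+\infty} e_n\,\ind_{[t_n,t_{n+1})}(t),\qquad t\ge 0.
\end{equation*}
By conditions (1)--(3) in the definition of $H$, the intervals $[t_n,t_{n+1})$ are pairwise disjoint, $e_n\in E$ whenever $t_n<+\infty$, and $S(h)$ is a piecewise-constant, $E$-valued c\`adl\`ag function; when~\eqref{eq:lambdambdd}--\eqref{eq:ptmbdd} force non-explosion of the underlying process, the sequence $(t_n)$ either is eventually $+\infty$ or satisfies $t_n\uparrow +\infty$, placing $S(h)\in\widetilde\cD_E$. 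For the inverse, given $\gamma\in\widetilde\cD_E$, I would enumerate its jump times in increasing order (finite or countable) as $\tau_1<\tau_2<\cdots$, extend the sequence by $+\infty$ if only finitely many jumps occur, and define $S^{-1}(\gamma)\coloneqq (\gamma(0),\tau_1,\gamma(\tau_1),\tau_2,\gamma(\tau_2),\dots)$ with $e_n=\delta$ whenever $\tau_n=+\infty$. The bijectivity $S\circ S^{-1}=\mathrm{id}$ and $S^{-1}\circ S=\mathrm{id}$ then reduces to checking that the jump times and post-jump values of $S(h)$ coincide with $(t_n,e_n)_{n\ge 1}$, which is a direct verification.

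\textbf{Measurability.} Using the metric on $H$ constructed in Appendix~\ref{Section_Appendix_2}, which by hypothesis turns $H$ into a Polish space, I would verify measurability of $S$ either directly from the explicit formula above (the indicator maps $(h,t)\mapsto \ind_{[t_n(h),t_{n+1}(h))}(t)$ are Borel in $h$ for each fixed $t$, and the coordinate projections $h\mapsto e_n$, $h\mapsto t_n$ are continuous by construction of the metric on $H$) or, more robustly, via the standard characterisation that a map into $\cD_E$ with the Skorokhod topology is Borel measurable iff the evaluations at a separating family of times are measurable and the set of jump times is measurable. Symmetrically, the inverse $S^{-1}$ is measurable because the ordered sequence of discontinuity points and the corresponding values of a c\`adl\`ag step function are classical Borel functionals on $\cD_E$ (see, e.g., the arguments in \citet{jacod:mpp}).

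\textbf{Identification with the stopped signal.} Conditions~\eqref{eq:lambdambdd} and~\eqref{eq:ptmbdd}, together with Remark~\ref{rem:m_noexplosion}(1), give $T_\infty=+\infty$ $\bbP$-a.s., so the random sequence $(T_n,X_{T_n})_{n\in\N}$ has no finite accumulation point almost surely. On this full-measure event, the definition of $\bfX_t$ makes $\bfX_t=(X_0,T_1,X_{T_1},\dots,T_{N_t},X_{T_{N_t}},+\infty,\delta,\dots)$, where $N_t=\sup\{n:T_n\le t\}$, and applying $S$ to this finite sequence reproduces exactly the piecewise-constant trajectory $s\mapsto X_{s\wedge t}$ on $[0,+\infty)$, which is the stopped path $X_{t\wedge\cdot}$. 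The reverse identity $\bfX_t=S^{-1}(X_{t\wedge\cdot})$ then follows from the bijectivity of $S$.

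\textbf{Main obstacle.} The conceptual content is light, but the careful point will be the measurability of $S$ and $S^{-1}$ with respect to the Skorokhod Borel structure on $\widetilde\cD_E$ and the metric on $H$ introduced in Appendix~\ref{Section_Appendix_2}: one must argue that reading off the ordered discontinuity points of a c\`adl\`ag step function is a Borel operation, something routine in principle but requiring some care when finitely many jumps are padded with $+\infty$.
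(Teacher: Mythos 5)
Your proposal is correct and follows essentially the same route as the paper: the same explicit construction of $S$ and $S^{-1}$, direct verification of bijectivity, measurability via the metric on $H$, and the non-explosion argument to identify $S(\bfX_t)$ with $X_{t\wedge\cdot}$. The only difference is that where you propose to argue directly that reading off the ordered jump times of a c\`adl\`ag step function is Borel (the step you flag as the main obstacle), the paper sidesteps this by proving continuity (hence measurability) of $S$ alone and then invoking Kuratowski's theorem to get measurability of $S^{-1}$ for free.
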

\begin{proof}
See Appendix \ref{Section_Appendix_2}.
\end{proof}

The map $S$ provided in~\eqref{S} is far from being an abstract object, since in several important cases it can be explicitly written. We give an example below.
\begin{example}\label{example_filter}
Let $x(\cdot)$ denote  the trajectory of an $E$-valued pure-jump process and consider the function $\phi \colon \widetilde \cD_E \rightarrow \R$ defined by
$$
\phi(x(\cdot \wedge t)) \coloneqq \sup_{s \in [0,\,t]} x(s), \quad t \geq 0.
$$
Clearly $y_t(\cdot) \coloneqq x(t \wedge \cdot) \in \widetilde \cD_E$, and, being $t \geq 0$ fixed, it has a finite number of jumps, say $N$.
Then $S^{-1}(y_t)= h$, for some $h \in H$ (depending on $t$), with $N(h) = N < \infty$.
In particular,
$\phi(y)= f \circ S^{-1}(y)$
with $f \colon H  \rightarrow \R$    given by
$$
f(h) = f(e_0, t_1, e_1, \dots) = \sup_{k \leq N(h)} e_k, \quad h \in H.
$$
\end{example}


\subsection{The Markovianized model}\label{subsec:markov}
From now on we consider a novel {\em partially observable system} $(\bfX, Y)$, which consists of the history (signal) process and the observed process. We call this pair the \emph{Markovianized model}.
{\color{blue} The aim of this section is to derive its dynamics.}

\medskip

\textsc{The history process.} To characterize the distribution of the history process $\bfX$ we compute the $\bbF$-dual predictable projection $\bfmu$ of the random counting measure $\bfm$, given in~\eqref{m}. The random measure $\bfmu$ is related to the $\bbF$-compensator $\mu$ (defined
in \eqref{eq:mu}) of the random counting measure $m$, associated to the jumps of the signal process $X$, as explained below.

Let $\Gamma \colon H \to E$ be the map given by
\begin{equation}\label{Gamma}
\Gamma(h) =
\begin{dcases}
e_{N(h)},	&\text{if } N(h) < \infty, \\
e,			&\text{if } N(h) = \infty,
\end{dcases}
\end{equation}
where $N$ is the map given in \eqref{eq:N} and $e \in E$ is any arbitrarily chosen point. 
It holds that $\Gamma(\bfX_t)=X_t, \, \bbP$-a.s., for any $t \geq 0$, that is the map $\Gamma$ in \eqref{Gamma} allows to recover from $\bfX$ the current values of the signal process $X$. Note that $\Gamma$ is measurable, onto but not one-to-one.

We also define maps $J_H$ and $J_{\widetilde \cD_E}$ that join trajectories in $H$ and $\widetilde \cD_E$, respectively, in the following way. $J_H \coloneqq H \times (0,+\infty) \times E \to H$ is defined for each $(h',t,e) \in H \times (0,+\infty) \times E$ as:
\begin{equation}\label{eq:JH}
J_H(h',t,e) \coloneqq (e_0, t_1, e_1, \dots) =
\begin{dcases}
e_0 \coloneqq e_0', \\
t_k \coloneqq t_k', \, e_k = e_k', 	&\text{if } k \neq N(h',t) + 1, \\
t_k \coloneqq t, \, e_k = e, 	&\text{if } k = N(h',t) + 1. \\
\end{dcases}
\end{equation}

On the other hand, $J_{\widetilde \cD_E} \coloneqq \widetilde \cD_E \times (0,+\infty) \times E \to \widetilde \cD_E$ is given for all $(x,t,e) \in \widetilde \cD_E \times (0,+\infty) \times E$ by:
\begin{equation}\label{eq:JDE}
J_{\widetilde \cD_E}(x,t,e) \coloneqq x(\cdot) \ind_{[0,t)}(\cdot) + e \ind_{[t,+\infty)}(\cdot).
\end{equation}

The action of map $J_H$ is to add to a trajectory $h' \in H$, with $N(h',t)$ jumps up to time $t > 0$,
a new jump (after the last one) at time $t$ to position $e \in E$. The resulting trajectory has $N(h',t) + 1$ jumps.

We have the following equalities:
\begin{align}
J_H(h',t,e) &= S^{-1}\left(J_{\widetilde \cD_E}(S(h'), t,e) \right), \quad \text{for all } (h',t,e) \in H \times (0,+\infty) \times E, \label{eq:JHJDE}\\
J_{\widetilde \cD_E}(x,t,e) &= S\left(J_H(S^{-1}(x), t,e) \right), \quad \text{for all } (x,t,e) \in \widetilde \cD_E \times (0,+\infty) \times E. \label{eq:JDEJH}
\end{align}

It can be proved that the map $J_H$ is continuous on $H \times (0,+\infty) \times E$ under the metric on $H$ introduced in Appendix \ref{Section_Appendix_2}, and therefore, measurable with respect to the product $\sigma$-algebra. Hence, thanks to \eqref{eq:JDEJH} and Proposition \ref{P_S}, the map $J_{\widetilde \cD_E}$ is measurable as well.%

We now define an $\bbF$-predictable random measure $\bfmu$ on $(0,+\infty) \times H$:
\begin{align}
	\bfmu((0,t] \times B) &\coloneqq \int_0^t \int_B \bflambda^m(s, \bfX_{s^-})\, \bfQ^m(s,\bfX_{s^-}; \dd h) \, \dd s\\
	& + \int_0^t \int_B \bfR^m(s,\bfX_{s^-}; \dd h) \,\dd p^m_s, \quad t > 0, \, B \in \cB(H), \label{eq:bfmu}
\end{align}
where $\bfQ^m \colon (0,+\infty) \times H \to \cP(H)$, $\bfR^m \colon (0,+\infty) \times H \to \cP(H)$  are the probability transition kernels given, for all $t > 0$, $H \in H$, and $B \in \cB(H)$, by
\begin{align}
\bfQ^m(t, h; \, B) &\coloneqq \ind_B\left(J_H(h,t,e)\right) Q^m(t, S(h); \, \dd e), \label{D:bfQ} \\
\bfR^m(t, h; \, B) &\coloneqq \ind_B\left(J_H(h,t,e)\right) R^m(t, S(h); \, \dd e), \label{D:bfR}
\end{align}%
and
$\bflambda^m \colon (0,+\infty) \times H \to (0,+\infty)$ is defined as
\begin{equation}\label{D:bflambda}
\bflambda^m(t, h) \coloneqq \lambda^m\bigl(t,S(h)\bigr). 
\end{equation}

We have the following important result.

\begin{proposition}\label{prop:bfmudualpredproj}
	Under Assumption~\ref{hp:jointlaw}, the random measure $\bfmu$ defined in~\eqref{eq:bfmu} is the $\bbF$-dual predictable projection of the random counting measure $\bfm$ in~\eqref{m}.
\end{proposition}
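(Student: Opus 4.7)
The plan is to verify the defining property of the $\bbF$-dual predictable projection, namely that for every non-negative $\bbF$-predictable $H$-indexed process $\bfC=(\bfC_s(\cdot))_{s\geq 0}$,
\begin{equation*}
\bbE\Bigl[\int_0^t \!\!\int_H \bfC_s(h)\, \bfm(\dd s\,\dd h)\Bigr]
= \bbE\Bigl[\int_0^t \!\!\int_H \bfC_s(h)\, \bfmu(\dd s\,\dd h)\Bigr], \quad t > 0,
\end{equation*}
together with the $\bbF$-predictability of $\bfmu$. The latter is essentially built into the definition~\eqref{eq:bfmu}: predictability of $s\mapsto \bflambda^m(s,\bfX_{s^-})$, $s\mapsto \bfQ^m(s,\bfX_{s^-};B)$, and $s\mapsto \bfR^m(s,\bfX_{s^-};B)$ follows from measurability of $S$ and $J_H$ (the latter discussed after~\eqref{eq:JDEJH}), measurability of the kernels $Q^m, R^m$ and of $\lambda^m$, together with the $\bbF$-predictability of $\bfX_{s^-}$ and of the counting process $p^m$.

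The key identity I would use is that the enumeration of jumps of $\bfm$ matches that of $m$, with
\begin{equation*}
\xi_n = J_H(\bfX_{T_n^-}, T_n, X_{T_n}), \qquad n \in \N,
\end{equation*}
which follows directly from the construction of $\bfX$ and of $J_H$ in~\eqref{eq:JH}. Hence, for any non-negative $\bbF$-predictable $\bfC$,
\begin{equation*}
\int_0^t \!\!\int_H \bfC_s(h)\, \bfm(\dd s\,\dd h)
= \sum_{n:\,T_n\leq t} \bfC_{T_n}\bigl(J_H(\bfX_{T_n^-}, T_n, X_{T_n})\bigr)
= \int_0^t \!\!\int_E C_s(e)\, m(\dd s\,\dd e),
\end{equation*}
where $C_s(e) \coloneqq \bfC_s(J_H(\bfX_{s^-}, s, e))$ is a non-negative $\bbF$-predictable $E$-indexed process (by measurability of $J_H$ and predictability of $\bfX_{s^-}$). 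I can then invoke the characterization of $\mu$ as the $\bbF$-compensator of $m$, substitute the explicit form~\eqref{eq:mu}, and use $X_{s^-\wedge\cdot} = S(\bfX_{s^-})$ (Proposition~\ref{P_S}).

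The final step is to recognize the resulting expression in terms of $\bfmu$: by the definitions~\eqref{D:bfQ}, \eqref{D:bfR}, \eqref{D:bflambda},
\begin{equation*}
\int_E \bfC_s\bigl(J_H(\bfX_{s^-}, s, e)\bigr)\, Q^m\bigl(s, S(\bfX_{s^-}); \dd e\bigr)
= \int_H \bfC_s(h)\, \bfQ^m(s, \bfX_{s^-}; \dd h),
\end{equation*}
and analogously for the $R^m$-term. Chaining these identities yields exactly $\bbE\bigl[\int_0^t\!\!\int_H \bfC_s(h)\,\bfmu(\dd s\,\dd h)\bigr]$, which establishes the equality for all non-negative $\bbF$-predictable $\bfC$; a monotone class argument reduces the general case to indicators $\bfC_s(h)=\ind_{(a,b]}(s)\ind_A(h)$ if a standard approximation is preferred.

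The main technical obstacle is the bookkeeping around the bijective correspondence $S \leftrightarrow S^{-1}$ between $H$ and $\widetilde \cD_E$ combined with the map $J_H$: one must confirm that $\bfC_s(J_H(\bfX_{s^-},s,\cdot))$ is $\bbF\otimes\cB(E)$-predictable, that $\xi_n$ really equals $J_H(\bfX_{T_n^-},T_n,X_{T_n})$ for every $n$ (which uses the no-explosion consequence of~\eqref{eq:lambdambdd}--\eqref{eq:ptmbdd} in Remark~\ref{rem:m_noexplosion}), and that the kernels $\bfQ^m,\bfR^m$ obtained by pushing forward $Q^m,R^m$ through $J_H$ genuinely land in $\cP(H)$; these are all bookkeeping rather than conceptual issues, and the measurability statements follow from the results on $S$ and $J_H$ collected in Proposition~\ref{P_S} and Appendix~\ref{Section_Appendix_2}.
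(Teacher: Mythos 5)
Your proposal is correct and follows essentially the same route as the paper's proof: both rest on the identity $\bfX_{T_n}=\xi_n=J_H(\bfX_{T_n^-},T_n,X_{T_n})$, convert the integral against $\bfm$ into one against $m$ with the predictable integrand $\bfC_s(J_H(\bfX_{s^-},s,\cdot))$, invoke the compensator property of $\mu$, and then recognize $\bfmu$ through the pushforward definitions of $\bfQ^m$, $\bfR^m$, $\bflambda^m$. The only cosmetic difference is that the paper first reduces to product-form integrands $\gamma_t\ind_B(h)$ by a monotone class argument, whereas you work with a general non-negative predictable $\bfC$ throughout.
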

\begin{proof}
We show that the equality
\begin{equation*}
	\bbE \biggl[\int_0^t \int_H C_s(h) \, \bfm(\dd s \, \dd h) \biggr] = \bbE \biggl[\int_0^t \int_H C_s(h) \, \bfmu(\dd s \, \dd h)\biggr], \quad t \geq 0
\end{equation*}	
holds for any non-negative $\bbF$-predictable random field $C \colon \Omega \times [0,+\infty) \times H \to \R$. By a standard monotone class argument, it is enough to prove the equality above for $C_t(h) = \gamma_t \ind_B(h)$, where $\gamma = (\gamma)_{t \geq 0}$ is a non-negative $\bbF$-predictable random process and $B \in \cB(H)$.

The key property is that $\bfX$ satisfies $\bfX_{T_n} = J_H(\bfX_{T_n^{-}},T_n, X_{T_n})$, for each $n \in \N$. Recalling also that $X_{t \land \cdot} = S(\bfX_t)$, $\bbP$-a.s., for all $t \geq 0$, we easily obtain:
	\begin{align*}
		&\bbE \biggl[\int_0^t\!\! \int_H C_s(h) \, \bfm(\dd s \, \dd h)\biggr] =
		\bbE \biggl[\sum_{n=1}^\infty \ind_{T_n \leq t} \,\gamma_{T_n} \ind_B(\bfX_{T_n})\biggr]
		\\
		= \, &\bbE \biggl[\sum_{n=1}^\infty \ind_{T_n \leq t} \,\gamma_{T_n} \ind_B(J_H(\bfX_{T_n^{-}},T_n, X_{T_n}))\biggr]
		\\
		= \, &\bbE \biggl[\int_0^t \!\!\int_E \gamma_s \ind_B(J_H(\bfX_{s^{-}},s, e)) \, m(\dd s \, \dd e)\biggr] =
		\bbE \biggl[\int_0^t \!\!\int_E \gamma_s \ind_B(J_H(\bfX_{s^{-}},s, e)) \, \mu(\dd s \, \dd e)\biggr]
		\\
		= \, &\bbE \biggl[\int_0^t \!\! \int_H \gamma_s \ind_B(h) \,\bflambda^m(s, \bfX_{s^-}) \, \bfQ^m(s, \bfX_{s^-}; \dd h)) \, \dd s\biggr]\\
& + \bbE \biggl[\int_0^t \!\!\int_H \gamma_s \ind_B(h) \, \bfR^m(s, \bfX_{s^-}; \dd h)) \, \dd p^m\biggr] \\
		= \, & \bbE \biggl[\int_0^t \!\!\int_H C_s(h) \, \bfmu(\dd s \, \dd h)\biggr]. \qedhere
	\end{align*}
\end{proof}

\begin{remark}\label{rem:Dm}
It is important to stress that the following equalities hold:
\begin{align*}
	\{(\omega, t) \in \Omega \times (0,+\infty) \colon \bfm(\omega; \, \{t\} \times H) = 1\}&=D^m, \\
	\{(\omega, t) \in \Omega \times (0,+\infty) \colon \bfmu(\omega; \, \{t\} \times H) > 0\}&=J^m,
\end{align*}
where $D^m$ and $J^m$ are the random sets defined in~\eqref{eq:Dm} and~\eqref{eq:Jm}. This implies, in particular, that Remark \ref{rem:m_noexplosion} remains valid  for the history process $\bfX$.
\end{remark}

\textsc{The observed process.} For $a=i,p$ and any $h, h' \in H$, $y,z \in \R$, $t \geq 0$ and $B \in \cB(Z)$, we set
\begin{equation}\label{eq:obscoeffs}
\begin{aligned}
\bfb(t,h, y)	&\coloneqq b(t,S(h), y),
&
\bfG^a(t,h, y, z)	&\coloneqq G^a\bigl(t,S(h), y, z\bigr), \\
\bfK^a(t,h, y, h')	&\coloneqq K^a\bigl(t,S(h), y, \Gamma(h')\bigr),
&
\bfQ^n(t, h, y; \, B) &\coloneqq Q^n\bigl(t, S(h),y; \, B \bigr), \\	
\bfR^n(t, h,y; \, B) &\coloneqq R^n\bigl(t, S(h),y; \, B\bigr),
&
\bflambda^n(t, h, y) &\coloneqq \lambda^n\bigl(t,S(h),y\bigr),
\end{aligned}
\end{equation}
where $S$ is the map given in Proposition~\ref{P_S} and $\Gamma$ is defined in~\eqref{Gamma}. 
Sometimes, to ease the notation, we write
\begin{align*}
	\bfb_t &\coloneqq \bfb(t,\bfX_t, Y_t),
	&
 \bfG^a_t(\cdot) &\coloneqq \bfG^a(t, \bfX_{t^-}, Y_{t^-}, \cdot), &\bfK^a_t(\cdot) &\coloneqq \bfK^a(t, \bfX_{t^-}, Y_{t^-}, \cdot),
\end{align*}
for any $t \geq 0$ and $a = i,p$.

Using Equation \eqref{eq:Ysde2} we can write the dynamics of $Y$ in terms of the history process as
\begin{equation}\label{eq:YSDE}
	\left\{
	\begin{aligned}
		\dd Y_t &= \bfb(t, \bfX_t, Y_t) \, \dd t + \sigma(t, Y_t) \, \dd W_t + \int_\R y \, m^Y(\dd t \, \dd y)\\
		Y_0 &= y_0 \in \R,
	\end{aligned}
	\right.
\end{equation}

It is useful to introduce the following sets, for $a=i,p$, $t \geq 0$, $h' \in H$, $y \in \R$, $B \in \cB(\R)$:
\begin{align}
d^{a,K}(B,t,h', y) &\coloneqq \{h \in H \colon \bfK^a(t, h', y, h)\in B\setminus\{0\}\},
&
D^{a,K}_t(B) &\coloneqq  d^{a,K}(B,t, \bfX_{t^-}, Y_{t^-}), \label{eq:DKt}
\\
d^{a,G}(B,t,h', y) &\coloneqq \{z \in Z \colon \bfG^a(t, h', y, z) \in B\setminus\{0\}\},
&
D^{a,G}_t(B) &\coloneqq  d^{a,G}(B,t, \bfX_{t^-}, Y_{t^-}). \label{eq:DGt}
\end{align}
We omit the dependence on the set if $B=\R$.

\begin{remark}
Thanks to the sets introduced above and the coefficients $\bfG^i$, $\bfG^p$, $\bfK^i$, $\bfK^p$ defined in~\eqref{eq:obscoeffs}, we can rewrite~\eqref{eq:gintegral} as
\begin{multline}
\int_0^t\int_\R g(y) \, m^Y(\dd s \, \dd y)
\\
= \int_0^t \int_H \ind_{D^{i,K}_s}(h) \, g\bigl(\bfK^i_s(h)\bigr) \, \bfm^i(\dd s \, \dd h)
+ \int_0^t \int_Z \ind_{D^{i,G}_s}(z) \, g\bigl(\bfG^i_s(z)\bigr) \, n^i(\dd s \, \dd z)
\\
+ \int_0^t \int_H \ind_{D^{p,K}_s}(h) \, g\bigl(\bfK^p_s(h)\bigr) \, \bfm^p(\dd s \, \dd h)
+ \int_0^t \int_Z \ind_{D^{p,G}_s}(z) \, g\bigl(\bfG^p_s(z)\bigr) \, n^p(\dd s \, \dd z),
\end{multline}
for all $t \geq 0$ and for any measurable function $g \colon \R \to \R$.
Here, similarly to Section~\ref{sec:model}, we define random measures $\bfm^i$ and $\bfm^p$ as:
\begin{align}
\qquad \bfm^i(\dd t \, \dd h) &\coloneqq \ind_{(D^m \setminus J^m) \times H}(t,h) \, \bfm(\dd t \, \dd h),
&
\bfm^p(\dd t \, \dd h) &\coloneqq \ind_{J^m \times H}(t,h) \, \bfm(\dd t \, \dd h).  \quad \qed
\end{align}
\end{remark}

It is clear that under Assumptions~\ref{hp:jointlaw}-\eqref{hp:m_n_disjoint} the jump times of $\bfm$ and $n$ are $\bbP$-a.s. disjoint (see also Remark~\ref{rem:Dm}). Moreover, under Assumption~\ref{hp:sdeexistuniq}, the functions $\bfb$, $\bfG^i$, $\bfG^p$, $\bfK^i$, $\bfK^p$, appearing in~\eqref{eq:obscoeffs} are measurable and satisfy, for all $t \geq 0$, $\displaystyle \bbE\biggl[\int_0^t \abs{\bfb_s} \, \dd s\biggr] < +\infty$ and
\begin{align*}
&\bbE\biggl[\int_0^t\!\! \int_H \abs{\bfK^i_s(h)} \, \bfQ^m(s, \bfX_{s^-}; \dd h) \dd s + \int_0^t\!\! \int_H \abs{\bfK^p_s(h)} \, \bfR^m(s, \bfX_{s^-}; \dd h)\dd p^m_s\biggr] < +\infty,\\
&\bbE\biggl[\int_0^t \int_Z \abs{\bfG^i_s(z)} \, \bfQ^n(s, \bfX_{s^-}, Y_{s^-}; \dd z) \dd s + \int_0^t \int_Z \abs{\bfG^p_s(z)} \, \bfR^n(s, \bfX_{s^-}, Y_{s^-}; \dd z)\dd p^n_s\biggr] < +\infty.\end{align*}

Finally, we compute the $\bbF$-dual predictable projection of the random counting measure $m^Y$. The proof is based on standard arguments and given in Appendix~\ref{Section_Appendix_5}.

\begin{proposition}\label{prop:mYFcomp}
  Under Assumptions \ref{hp:jointlaw}, \ref{hp:sdeexistuniq}, the $\bbF$-dual predictable projection of the random counting measure $m^Y$ is given by:
  \begin{align}
    \bfmu^Y\bigl((0,t] \times B\bigr) &= \int_0^t \biggl\{ \int_H \ind_{D_s^{i,K}(B)}(h) \, \bflambda^m(s, \bfX_{s^-}) \, \bfQ^m(s, \bfX_{s^-} ; \dd h) \\
    &\qquad + \int_Z \ind_{D_s^{i,G}(B)}(z) \, \bflambda^n(s, \bfX_{s^-}, Y_{s-}) \, \bfQ^n(s, \bfX_{s^-} , Y_{s-}; \dd z) \biggr\} \, \dd s \\
    &+ \int_0^t \int_H \ind_{D_s^{p,K}(B)}(h) \, \,\bfR^m(s, \bfX_{s^-}; \dd h) \,\dd p_s^m \\
    &+ \int_0^t \int_Z \ind_{D_s^{p,G}(B)}(z) \, \,\bfR^n(s, \bfX_{s^-}, Y_{s-}; \dd z) \,\dd p_s^n. \label{eq:mYFcomp}
  \end{align}
for all $t > 0, \, B \in \cB(\R)$.
\end{proposition}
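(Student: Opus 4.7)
The plan is to identify $\bfmu^Y$ by computing the $\bbF$-dual predictable projection piece by piece, starting from the decomposition of $m^Y$ given in the Remark preceding the statement. By the uniqueness of the compensator, it suffices to verify that
\begin{equation*}
\bbE\Bigl[\int_0^t\!\!\int_\R C_s(y)\,m^Y(\dd s\,\dd y)\Bigr]=\bbE\Bigl[\int_0^t\!\!\int_\R C_s(y)\,\bfmu^Y(\dd s\,\dd y)\Bigr]
\end{equation*}
for every nonnegative $\bbF$-predictable random field $C$, and a monotone class argument further reduces this to test fields of the form $C_s(y)=\gamma_s\ind_B(y)$ with $\gamma$ a nonnegative $\bbF$-predictable process and $B\in\cB(\R)$.

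Next I would apply the identity from the Remark with $g=\ind_B$, which rewrites $\int_0^t\!\!\int_\R\ind_B(y)\,m^Y(\dd s\,\dd y)$ as a sum of four integrals of the indicators of $D^{i,K}_s(B)$, $D^{p,K}_s(B)$, $D^{i,G}_s(B)$, $D^{p,G}_s(B)$ against $\bfm^i$, $\bfm^p$, $n^i$, $n^p$ respectively. After multiplying by $\gamma_s$ and taking expectation, each of the four resulting terms is evaluated using the $\bbF$-compensator of the corresponding random measure. For the totally inaccessible pieces I replace $\bfm^i$ and $n^i$ by the continuous-in-time parts of $\bfmu$ (Proposition~\ref{prop:bfmudualpredproj}) and $\nu$ (Assumption~\ref{hp:nu}), namely $\bflambda^m(s,\bfX_{s^-})\bfQ^m(s,\bfX_{s^-};\dd h)\,\dd s$ and $\lambda^n(s,X_{s^-\wedge\cdot},Y_{s^-})Q^n(s,X_{s^-\wedge\cdot},Y_{s^-};\dd z)\,\dd s$; for the predictable pieces $\bfm^p$, $n^p$ I use the purely discontinuous parts $\bfR^m(s,\bfX_{s^-};\dd h)\,\dd p^m_s$ and $R^n(s,X_{s^-\wedge\cdot},Y_{s^-};\dd z)\,\dd p^n_s$, which are $\bbF$-predictable because $p^m,p^n$ are $\bbY$-predictable, hence $\bbF$-predictable. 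Rewriting the kernels and coefficients through the conventions~\eqref{eq:obscoeffs} identifies the sum of the four terms with $\bbE\bigl[\int_0^t\!\!\int_\R\gamma_s\ind_B(y)\,\bfmu^Y(\dd s\,\dd y)\bigr]$ for $\bfmu^Y$ as in~\eqref{eq:mYFcomp}.

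The main technical obstacle is justifying the split of the compensators of $\bfm$ and $n$ into totally inaccessible and predictable contributions, aligned with the decomposition $\bfm=\bfm^i+\bfm^p$ and $n=n^i+n^p$. This relies on the fact that $J^m$ (resp.\ $J^n$) is the $\bbF$-predictable support of $D^m$ (resp.\ $D^n$), so that the $\bbF$-dual predictable projection of $\ind_{J^m\times H}\,\bfm$ coincides with $\ind_{J^m\times H}\,\bfmu$, which, thanks to the explicit form~\eqref{eq:bfmu} and the fact that $J^m$ is $\bbP$-a.s.\ a countable union of graphs of $\bbF$-predictable times (hence of Lebesgue measure zero in $s$), reduces to the $\dd p^m$-term; symmetrically, the projection of $\ind_{(D^m\setminus J^m)\times H}\,\bfm$ picks out only the absolutely continuous $\dd s$-term. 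The same argument applied to $n$ completes the identification. Finally, exchanging expectation and the various integrals is legitimate under Assumption~\ref{hp:sdeexistuniq} and its consequences listed in Section~\ref{subsec:markov}, and a standard $\pi$-$\lambda$ argument extends the equality from $\gamma_s\ind_B(y)$ to arbitrary nonnegative $\bbF$-predictable fields $C_s(y)$, concluding the proof.
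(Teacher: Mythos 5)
Your proposal is correct and follows essentially the same route as the paper's proof: reduce to test fields $\gamma_s\ind_B(y)$, apply the decomposition \eqref{eq:gintegral} with $g=\ind_B$, and compensate each of the four measures $\bfm^i,\bfm^p,n^i,n^p$ separately. You are in fact more explicit than the paper on the one delicate point --- that $\ind_{J^m\times H}\,\bfm$ compensates to the $\dd p^m$-part and $\ind_{(D^m\setminus J^m)\times H}\,\bfm$ to the $\dd s$-part, which follows from $J^m$ being $\bbF$-predictable with Lebesgue-null time sections --- a step the paper's proof leaves implicit.
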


{\color{blue}
\section{The filter}\label{sec:filtprob}
}
We introduce the \emph{filter} $\pi = (\pi_t)_{t \geq 0}$, as 
\begin{equation}\label{eq:pidef}
	\pi_t(\phi) = \bbE[\phi(X_{t \land \cdot}) \mid \cY_t], \quad t \geq 0,
\end{equation}
for any bounded and measurable function $\phi \colon \cD_E \to \R$.
{\color{blue}
Recall that $\cD_E$ denotes the set of $E$-valued c\`adl\`ag functions on $[0,+\infty)$ and the notation $X_{t \land \cdot}$ indicates the path of process $X$ stopped at time $t \geq 0$, i.e., $(X_{t \land \cdot})_{t \geq 0}$ is the $\cD_E$-valued stochastic process such that, for any $t \geq 0$, $X_{t \land \cdot} = \{s \mapsto X_{t \land s}\}_{s \geq 0}$.
}

Since $\cD_E$ is a complete and separable metric space, the process $\pi$ is well-defined, $\cP(\cD_E)$-valued and $\bbY$-adapted. Moreover, $\pi$ admits a c\`adl\`ag modification, since $X$ is c\`adl\`ag (see, e.g. \citep[Theorem~2.1]{bain:fundofstochfilt}).

The function $\phi$ in equation~\eqref{eq:pidef}
depends on the history of the signal process $X$ in a non-anticipative way, i.e., through the stopped trajectory $X_{t \land \cdot}$, $t \geq 0$. Considering this kind of dependence is one of the main features of our model and generalizes filtering results available in the literature. Our setting is characterized by the lack of Markovianity due to the fact that $X$ is not assumed to be Markovian and the dynamics of $Y$ depend on the whole trajectory of the signal. 
Existing results on filtering, instead, mainly cover the Markovian case, where the filter is completely characterized by computing the conditional law of each of the random variables $X_t$ given $\cY_t$, for all $t \geq 0$.

{\color{blue}To compute then filtering equation we make use the Markovianization procedure described in Section~\ref{sec:markov} and the relationship between $X$ and $\bfX$. Therefore, we first study the filtering problem for the partially observed system $(\bfX, Y)$, whose dynamics have been derived in Section~\ref{subsec:markov}.
We introduce} a new filter  $\Pi \coloneqq (\Pi_t)_{t \geq 0}$, satisfying
\begin{equation}\label{eq:Pidef}
\Pi_t(f) = \bbE\bigl[f(\bfX_t) \mid \cY_t\bigr], \quad t \geq 0,
\end{equation}
for any bounded and measurable function $f \colon H \to \R$%
{\color{blue}, where $H$ is the state space of the history process $\bfX$.
Also in this case, since $H$ is a complete and separable metric space, the process $\Pi$ is well-defined, $\cP(H)$-valued, $\bbY$-adapted, and admitting a c\`adl\`ag modification.}
{\color{blue} Finding a representation of $\Pi$ as a solution of a {\em filtering equation}
allows us deduce a filtering equation for $\pi$. These two results are proved in Section~\ref{sec:filter}}.

We remind that the map $S$ of Proposition~\ref{P_S} enables us to go back and forth between the original model of Section~\ref{sec:model} and the Markovianized one. In particular, there is a relation between the stochastic processes $\pi$ and $\Pi$, corresponding to the original model and the Markovianized model respectively, which is described in the following Lemma.

\begin{lemma}\label{L:4.1}
Let $\pi$ and $\Pi$ be the  processes defined in~\eqref{eq:pidef} and~\eqref{eq:Pidef}, respectively. Then, for any bounded and measurable function $\phi \colon \cD_E \to \R$, the function $f \colon H \to \R$ defined as $f \coloneqq \phi \circ S$ is bounded and measurable and
\begin{equation}\label{eq:filterequality}
	\Pi_t(f) = \pi_t(\phi),	\quad \bbP\text{-a.s.}, \, t \geq 0.
\end{equation}
Vice versa, for any bounded and measurable function $f \colon H \to \R$, the function $\phi \colon \cD_E \to \R$ given by $\phi \coloneqq f \circ S^{-1}$ is bounded and measurable and~\eqref{eq:filterequality} holds.
\end{lemma}
The proof of this Lemma is omitted since it is an immediate consequence of Proposition~\ref{P_S}.

\section{Martingale representation theorem under the observed filtration}\label{sec:martrepr}
In this section we consider the Markovianized model introduced in Section~\ref{subsec:markov}.
To characterize the dynamics of the filter $\Pi$, see equation~\eqref{eq:Pidef}, it is necessary to provide a representation theorem for martingales with respect to the observed filtration $\bbY$. The form of the observed process $Y$ suggests that $\bbY$-martingales can be represented as the sum of two stochastic integrals, respectively driven by a $\bbY$-Brownian motion and the $\bbY$-compensated jump measure of $Y$. Therefore, as a first step, we  compute the $\bbY$-dual predictable projection of $m^Y$, defined in~\eqref{mY}.

In the sequel we denote integrals with respect to the $\cP(H)$-valued random variable $\Pi_t, \, t \geq 0$, as
$$
\Pi_t\bigl(F(t,\cdot,Y_t)\bigr) \coloneqq \int_H F(t, h, Y_t)\, \Pi_t(\dd h),
$$
for any bounded and measurable function $F \colon [0,+\infty) \times H \times \R \to \R$, and
$$
\Pi_t\bigl(R(t, \cdot, Y_t \, ; \, \dd h)\bigr) \coloneqq \int_H R(t, h', Y_t \, ; \, \dd h)\, \Pi_t(\dd h'),
$$
for any transition kernel $R$ from $\bigl([0,+\infty) \times H \times \R\bigr)$ into $\bigl(H, \cB(H)\bigr)$.

\begin{proposition}\label{prop:mYYcomp}
  Under Assumptions \ref{hp:jointlaw} and \ref{hp:sdeexistuniq}, the $\bbY$-dual predictable projection of the random counting measure $m^Y$ is given by:
  \begin{align}
	\hat\bfmu^Y\bigl((0,t] \times B\bigr)
	&= \int_0^t \biggl\{\int_H \Pi_{s^-}\bigl(\ind_{d^{i,K}(B, s, \cdot, Y_{s^-})}(h) \,\bflambda^m(s, \cdot) \bfQ^m(s, \cdot ; \dd h)\bigr) \\
    &\qquad + \int_Z \Pi_{s^-}\bigl(\ind_{d^{i,G}(B, s, \cdot, Y_{s^-})}(z)  \,\bflambda^n(s, \cdot, Y_{s-}) \bfQ^n(s, \cdot, Y_{s-}; \dd z)\bigr) \biggr\} \, \dd s
	\\
	&+ \int_0^t \int_H \Pi_{s^-}\bigl(\ind_{d^{p,K}(B,s, \cdot, Y_{s^-})}(h) \,\bfR^m(s, \cdot; \dd h)\bigr) \, \dd p_s^m \\
	&+ \int_0^t \int_Z \Pi_{s^-}\bigl(\ind_{d^{p,G}(B,s, \cdot, Y_{s^-})}(z) \,\bfR^n(s, \cdot, Y_{s-}; \dd z)\bigr) \, \dd p_s^n, \label{eq:mYYcomp}
\end{align}
for all $t > 0, \, B \in \cB(\R)$.
\end{proposition}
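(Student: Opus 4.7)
The plan is to obtain $\hat\bfmu^Y$ by projecting the $\bbF$-dual predictable projection $\bfmu^Y$ of Proposition~\ref{prop:mYFcomp} onto the $\bbY$-predictable $\sigma$-algebra, invoking the definition of $\Pi$ to rewrite the resulting conditional expectations. By a standard monotone class argument (together with uniqueness of the $\bbY$-dual predictable projection in our Lusin-space setting), it suffices to verify
\begin{equation*}
\bbE\biggl[\int_0^t \gamma_s\, m^Y(\dd s, B)\biggr] = \bbE\biggl[\int_0^t \gamma_s\, \hat\bfmu^Y(\dd s, B)\biggr]
\end{equation*}
for every $B \in \cB(\R)$ and every bounded non-negative $\bbY$-predictable process $\gamma$. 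Since $\bbY$-predictability implies $\bbF$-predictability, Proposition~\ref{prop:mYFcomp} identifies the left-hand side with $\bbE\bigl[\int_0^t \gamma_s \, \bfmu^Y(\dd s, B)\bigr]$, and the task reduces to transforming the explicit four-term formula~\eqref{eq:mYFcomp} into~\eqref{eq:mYYcomp}.

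For the two terms of $\bfmu^Y$ absolutely continuous in $\dd s$, Fubini's theorem and the tower property reduce the computation to the identity
\begin{equation*}
\bbE\bigl[\gamma_s\, F(s, \bfX_{s^-}, Y_{s^-})\bigr] = \bbE\bigl[\gamma_s\,\bbE[F(s, \bfX_{s^-}, Y_{s^-}) \mid \cY_{s^-}]\bigr],
\end{equation*}
valid since $\gamma_s$ is $\cY_{s^-}$-measurable; here $F(s,h,y)$ denotes the $(\bflambda^m, \bfQ^m)$- or $(\bflambda^n, \bfQ^n)$-integral appearing in~\eqref{eq:mYFcomp}. Because $Y_{s^-}$ is also $\cY_{s^-}$-measurable, the inner conditional expectation reduces to one of a function of $\bfX_{s^-}$ alone, and is identified with $\Pi_{s^-}(F(s,\cdot, Y_{s^-}))$ through the definition of the filter; this reproduces the first two lines of~\eqref{eq:mYYcomp}. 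For the terms of $\bfmu^Y$ driven by $\dd p^m_s$ and $\dd p^n_s$, the crucial fact is that $p^m$ and $p^n$ are $\bbY$-predictable by Assumptions~\ref{hp:mu}-$(iii)$ and~\ref{hp:nu}-$(iii)$, so their jumps occur at $\bbY$-predictable stopping times $\tau$. Writing these Stieltjes integrals as sums over such jumps, conditioning on the strict past $\cY_{\tau^-}$ at each $\tau$, and recombining back into an integral against $\dd p^m_s$ (resp.\ $\dd p^n_s$) yields the last two lines of~\eqref{eq:mYYcomp}. A final check that every integrand in~\eqref{eq:mYYcomp} is $\bbY$-predictable---because it depends only on $s$, $\Pi_{s^-}$, $Y_{s^-}$ and the $\bbY$-predictable processes $p^m, p^n$, through jointly measurable kernels---completes the argument.

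The main technical obstacle is the justification, uniformly in $s$ outside a single $\bbP$-null set, of the identification $\bbE[f(\bfX_{s^-}) \mid \cY_{s^-}] = \Pi_{s^-}(f)$ for bounded measurable $f$. This relies on the càdlàg modification of $\Pi$ (available because $\bfX$ is càdlàg, cf.\ the discussion after~\eqref{eq:pidef}) and on the fact that its left-limit represents the conditional law of $\bfX_{s^-}$ given $\cY_{s^-}$. Handling the predictable jump terms is the other delicate point, but the standing hypothesis that $p^m$ and $p^n$ are themselves $\bbY$-predictable precisely sidesteps the subtle issue---recalled at the end of Section~\ref{sec:model}---that predictable jump times may lose their predictable character under projection onto a smaller filtration, a difficulty that would otherwise arise when attempting to compute the $\bbY$-compensators of $p^m$ and $p^n$.
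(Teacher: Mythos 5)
Your proposal is correct and follows essentially the same route as the paper: reduce by a monotone class argument to integrands of the form $\gamma_s\,\ind_B(y)$, use that $\bbY$-predictability implies $\bbF$-predictability to invoke Proposition~\ref{prop:mYFcomp}, handle the $\dd s$-terms by Fubini and the tower property, and handle the $\dd p^m$, $\dd p^n$ terms via the $\bbY$-predictability of these counting processes. The only cosmetic difference is that you unfold the predictable-jump step into an explicit sum over predictable jump times with conditioning on the strict past, where the paper identifies the integrand as a version of the $\bbY$-predictable projection and cites \citep[Ch.~VI, Theorem~57]{dellacheriemeyer:B} — the same mechanism.
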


\begin{proof}
We need to show that the equality
\begin{equation*}
	\bbE \biggl[\int_0^t \int_\R C_s(y) \, m^Y(\dd s \, \dd y)\biggr] = \bbE \biggl[\int_0^t \int_\R C_s(y) \, \hat \bfmu^Y(\dd s \, \dd y)\biggr],
\end{equation*}	
holds for any $t \geq 0$, and any non-negative and $\bbY$-predictable random field $C \colon \Omega \times [0,\infty) \times \R \to \R$.

It is sufficient to prove it for $C$ of the form $C_t(y) = \gamma_t \, \ind_B(y)$, where $\gamma$ is a non-negative $\bbY$-predictable process and $B \in \cB(\R)$. The conclusion follows, then, by a standard monotone class argument.

Since $\bfmu^Y$, given in~\eqref{eq:mYFcomp}, is the $\bbF$-compensator of the random counting measure $m^Y$ and $C$ is, in particular, $\bbF$-predictable, we get that, for all $t \geq 0$,

\begin{align}
	&\mathrel{\phantom{=}} \bbE \biggl[\int_0^t \gamma_s \int_\R \ind_B(y) \, m^Y(\dd s \, \dd y)\biggr] = \bbE \biggl[\int_0^t \gamma_s \int_\R \ind_B(y) \, \bfmu^Y(\dd s \, \dd y)\biggr] \\
	&= \bbE \biggl[\int_0^t \gamma_s \biggl\{ \int_H \ind_{d^{i,K}(B,s,\bfX_{s^-},Y_{s^-})}(h) \, \bflambda^m(s, \bfX_{s^-}) \, \bfQ^m(s, \bfX_{s^-} ; \dd h) \\
 	&\qquad + \int_Z \ind_{d^{i,G}(B,s,\bfX_{s^-},Y_{s^-})}(z) \, \bflambda^n(s, \bfX_{s^-}, Y_{s^-}) \, \bfQ^n(s, \bfX_{s^-} , Y_{s^-}; \dd z) \biggr\} \, \dd s\biggr] \\
	&+ \bbE \biggl[\int_0^t \gamma_s \int_H \ind_{d^{p,K}(B,s,\bfX_{s^-},Y_{s^-})}(h) \, \,\bfR^m(s, \bfX_{s^-}; \dd h) \,\dd p_s^m\biggr] \\
	&+ \bbE \biggl[\int_0^t \gamma_s \int_Z \ind_{d^{p,G}(B,s,\bfX_{s^-},Y_{s^-})}(z) \, \,\bfR^n(s, \bfX_{s^-}, Y_{s^-}; \dd z) \,\dd p_s^n\biggr]\label{eq:mYFcomp2}
\end{align}	

We can deal with the first two lines of this equality by repeatedly applying the \mbox{Fubini-Tonelli} Theorem and properties of conditional expectations, to get:
\begin{align*}
&\mathrel{\phantom{=}} \bbE \biggl[\int_0^t \gamma_s \biggl\{ \int_H \ind_{d^{i,K}(B,s,\bfX_{s^-},Y_{s^-})}(h) \, \bflambda^m(s, \bfX_{s^-}) \, \bfQ^m(s, \bfX_{s^-} ; \dd h) \\
&\qquad + \int_Z \ind_{d^{i,G}(B,s,\bfX_{s^-},Y_{s^-})}(z) \, \bflambda^n(s, \bfX_{s^-}, Y_{s^-}) \, \bfQ^n(s, \bfX_{s^-} , Y_{s^-}; \dd z) \biggr\} \, \dd s\biggr] \\
&= \int_0^t \bbE \biggl[\gamma_s \, \bbE \biggl[\int_H \ind_{d^{i,K}(B,s,\bfX_{s^-},Y_{s^-})}(h) \, \bflambda^m(s, \bfX_{s^-}) \, \bfQ^m(s, \bfX_{s^-} ; \dd h) \\
&\qquad + \int_Z \ind_{d^{i,G}(B,s,\bfX_{s^-},Y_{s^-})}(z) \, \bflambda^n(s, \bfX_{s^-}, Y_{s^-}) \, \bfQ^n(s, \bfX_{s^-} , Y_{s^-}; \dd z) \biggm| \cY_s \biggr] \, \dd s\biggr] \\
&=\bbE \biggl[ \int_0^t \gamma_s \biggl\{\int_H \Pi_{s^-}\bigl(\ind_{d^{i,K}(B, s, \cdot, Y_{s^-})}(h) \,\bflambda^m(s, \cdot) \bfQ^m(s, \cdot ; \dd h)\bigr) \\
&\qquad + \int_Z \Pi_{s^-}\bigl(\ind_{d^{i,G}(B, s, \cdot, Y_{s^-})}(z)  \,\bflambda^n(s, \cdot, Y_{s^-}) \bfQ^n(s, \cdot, Y_{s^-}; \dd z)\bigr) \biggr\} \, \dd s \biggr]. 	
\end{align*}

To elaborate the last two lines of~\eqref{eq:mYFcomp2} we observe, first, that for any $s \geq 0$
\begin{multline}
\gamma_s \int_H \Pi_{s^-}\bigl(\ind_{d^{p,K}(B,s, \cdot, Y_{s^-})}(h) \,\bfR^m(s, \cdot; \dd h)\bigr) \\
= \bbE \biggl[\gamma_s \, \int_H \ind_{d^{p,K}(B,s,\bfX_{s^-},Y_{s^-})}(h) \, \,\bfR^m(s, \bfX_{s^-}; \dd h) \biggm| \cY_{s^-} \biggr],
\end{multline}
since $\gamma$ is $\bbY$-predictable, hence $\gamma_s$ is $\cY_{s^-}$-measurable (see, e.g., \citep[Exercise 7.7.5]{cohen:stochcalculus}). Moreover, the process on the left hand side, being the product of a $\bbY$-predictable process and a left-continuous $\bbY$-adapted process, is $\bbY$-predictable, and then it is a version of the $\bbY$-predictable projection of
\begin{equation*}
\gamma_s \, \int_H \ind_{d^{p,K}(B,s,\bfX_{s^-},Y_{s^-})}(h) \, \,\bfR^m(s, \bfX_{s^-}; \dd h), \quad s \geq 0.
\end{equation*}
A similar reasoning holds also for the $\bbY$-predictable projection of
projection of
\begin{equation*}
\gamma_s \int_Z \ind_{d^{p,G}(B,s,\bfX_{s^-},Y_{s^-})}(z) \, \,\bfR^n(s, \bfX_{s^-}, Y_{s-}; \dd z), \quad s \geq 0.
\end{equation*}

In view of these facts, we can apply \citep[Ch. VI, Theorem 57]{dellacheriemeyer:B} to the last two lines of~\eqref{eq:mYFcomp2} and get:
\begin{align*}
&\mathrel{\phantom{=}} \bbE \biggl[\int_0^t \gamma_s \int_H \ind_{d^{p,K}(B,s,\bfX_{s^-},Y_{s^-})}(h) \, \,\bfR^m(s, \bfX_{s^-}; \dd h) \,\dd p_s^m\biggr] \\
&+ \bbE \biggl[\int_0^t \gamma_s \int_Z \ind_{d^{p,G}(B,s,\bfX_{s^-},Y_{s^-})}(z) \, \,\bfR^n(s, \bfX_{s^-}, Y_{s^-}; \dd z) \,\dd p_s^n\biggr] \\
&= \bbE \biggl[\int_0^t \gamma_s \, \int_H \Pi_{s^-}\bigl(\ind_{d^{p,K}(B,s, \cdot, Y_{s^-})}(h) \,\bfR^m(s, \cdot; \dd h)\bigr) \, \dd p_s^m \biggr] \\
&+ \bbE \biggl[\int_0^t \gamma_s \, \int_Z \Pi_{s^-}\bigl(\ind_{d^{p,G}(B,s, \cdot, Y_{s^-})}(z) \,\bfR^n(s, \cdot, Y_{s^-}; \dd z)\bigr) \, \dd p_s^n \biggr]. \qedhere
\end{align*}
\end{proof}

\subsection{The martingale representation theorem}\label{sec:martreprth}
In this section we provide a martingale representation theorem with respect to the filtration $\bbY$.

We consider a pair $(I, m^Y- \hat \bfmu^Y)$ consisting of a $\bbY$-Brownian motion $I \coloneqq (I_t)_{t \geq 0}$ and the $\bbY$-compensated random counting measure associated to the jumps of the observed process $Y$. We call this pair the {\em innovation}.

The process $I$ is defined as
\begin{equation}\label{Innov}
I_t \coloneqq W_t +   \int_0^t \frac{\bfb(s, \bfX_s, Y_{s}) - \Pi_t(\bfb(s, \cdot, Y_s))}{\sigma(s, Y_s)}   \,\dd s, \quad t \geq 0 .
\end{equation}

We shall work under the following hypothesis.
\begin{assumption}\label{ass:girsanov}
$$
\color{blue}\bbE\left[\exp\biggl\{ \frac{1}{2}\int_0^t\biggl(\frac{b_s}{\sigma_s}\biggr)^2 \dd s \biggr\}\right]
\normalcolor= \bbE\left[\exp\biggl\{ \frac{1}{2}\int_0^t\biggl(\frac{\bfb_s}{\sigma_s}\biggr)^2 \dd s \biggr\}\right]< \infty, \quad \text{for all } t \geq 0.
$$
\end{assumption}

This Novikov-type condition can be weakened, but we do not pursue any generalization here. For more details, see, e.g., \citep[Lemma 3.9]{bain:fundofstochfilt} or \citep{cecicolaneri:ks}.

\begin{lemma}
	Under Assumption~\ref{ass:girsanov}, $I$ is a $\bbY$-Brownian motion.
\end{lemma}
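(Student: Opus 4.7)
The strategy is to apply L\'evy's characterization: $I$ is a $\bbY$-Brownian motion as soon as (i) it is $\bbY$-adapted, (ii) it is continuous with $I_0 = 0$, (iii) it is a $\bbY$-martingale, and (iv) its quadratic variation satisfies $[I,I]_t = t$.

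For (i), the definition~\eqref{Innov} writes $I$ in terms of $W$, which is not $\bbY$-adapted in general. The plan is to use the observation SDE~\eqref{eq:YSDE} to express $\sigma_s\, \dd W_s = \dd Y_s^c - \bfb_s\, \dd s$, where $Y^c$ denotes the continuous component of the semimartingale $Y$ (which is $\bbY$-adapted since $Y$ and its jump process both are). Substituting in~\eqref{Innov}, the $\bfb_s/\sigma_s$ terms telescope and one obtains the manifestly $\bbY$-adapted representation
$$I_t = \int_0^t \frac{1}{\sigma_s}\, \dd Y_s^c - \int_0^t \frac{\Pi_s(\bfb(s,\cdot,Y_s))}{\sigma_s}\, \dd s.$$
Continuity and $I_0 = 0$ then follow at once from this representation, giving (ii).

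For (iii), I would test the $\bbY$-martingale property against bounded $\cY_s$-measurable random variables $\eta$. The Brownian contribution $\bbE[\eta(W_t - W_s)]$ vanishes because $\eta$ is $\cF_s$-measurable (as $\cY_s \subset \cF_s$) and $W$ is an $\bbF$-Brownian motion. For the drift correction, Fubini combined with the tower property and the $\cY_u$-measurability of $\eta/\sigma_u$ for $u \geq s$ yields
$$\bbE\biggl[\eta \int_s^t \frac{\bfb_u - \Pi_u(\bfb(u,\cdot,Y_u))}{\sigma_u}\, \dd u\biggr] = \int_s^t \bbE\biggl[\frac{\eta}{\sigma_u}\bigl(\bbE[\bfb_u \mid \cY_u] - \Pi_u(\bfb(u,\cdot,Y_u))\bigr)\biggr] \dd u = 0,$$
using that $\Pi_u(\bfb(u,\cdot,Y_u)) = \bbE[\bfb_u \mid \cY_u]$ by the very definition of the filter.

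Finally, for (iv), $I - W$ is absolutely continuous, hence has zero quadratic variation and zero covariation with $W$, so $[I,I]_t = [W,W]_t = t$, and L\'evy's theorem concludes. The main obstacle is of a technical nature: ensuring enough integrability to justify Fubini and to guarantee that $I$ is a \emph{true} (not merely local) $\bbY$-martingale. This is precisely where Assumption~\ref{ass:girsanov} enters: Novikov's condition implies $\bbE\bigl[\int_0^t (\bfb_s/\sigma_s)^2\, \dd s\bigr] < +\infty$, whence Cauchy--Schwarz gives $\bbE\bigl[\int_0^t |\bfb_s|/\sigma_s\, \dd s\bigr] < +\infty$, and the same bound transfers to $\Pi_s(\bfb(s,\cdot,Y_s))/\sigma_s$ by conditional Jensen, which covers all integrability needs in the argument above.
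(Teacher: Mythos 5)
Your argument is correct, but note that the paper does not actually prove this lemma: it simply cites \citep[Theorem~7.17]{liptsershiryaev2001:statistics}. What you have written is, in substance, the standard proof underlying that citation --- L\'evy's characterization applied to the innovation --- so you are supplying the argument the authors delegate to the literature. All the essential points are in place: the telescoping identity $I_t = \int_0^t \sigma_s^{-1}\,\dd Y^c_s - \int_0^t \sigma_s^{-1}\Pi_s(\bfb(s,\cdot,Y_s))\,\dd s$ for adaptedness, the test-function verification of the martingale property using $\Pi_u(\bfb(u,\cdot,Y_u)) = \bbE[\bfb_u \mid \cY_u]$, the observation that $I-W$ has finite variation so $[I,I]_t = t$, and the correct identification of where Assumption~\ref{ass:girsanov} is used (Novikov $\Rightarrow$ $L^2$-integrability of $\bfb/\sigma$ via $x \le 2e^{x/2}$, then Cauchy--Schwarz and conditional Jensen). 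Two small points worth making explicit if you expand this: (a) the $\cY_t$-measurability of $\int_0^t \sigma_s^{-1}\,\dd Y^c_s$ is not entirely automatic, since stochastic integrals are a priori defined relative to a filtration; one should invoke the fact that $Y^c$ (i.e., $Y$ minus its jumps, well defined pathwise because $m$ and $n$ are non-explosive) is a $\bbY$-semimartingale by Stricker's theorem, and that the integral of a $\bbY$-predictable integrand agrees whether computed in $\bbF$ or in $\bbY$; (b) the identity $\bbE[\bfb(u,\bfX_u,Y_u)\mid\cY_u] = \int_H \bfb(u,h,Y_u)\,\Pi_u(\dd h)$ is a freezing-lemma/monotone-class fact (since $\bfb$ is only integrable, not bounded), consistent with the notation the paper uses throughout.
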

\begin{proof}
The result is shown, for instance, in  \citep[Theorem 7.17]{liptsershiryaev2001:statistics}.
\end{proof}

{\color{blue}%
Thanks to Assumption~\ref{ass:girsanov}, the process $L$ given by
\begin{equation}
L_t \coloneqq \mathcal E \left(-\int_0^t \frac{\Pi_t(\bfb(s, \cdot, Y_s))}{\sigma(s, Y_s)}  \,\dd I_s\right), \quad t \geq 0,
\end{equation}
where $\cE(\cdot)$ denotes the Dol\'eans-Dade exponential, is
a $\bbY$-martingale.
Therefore, for any fixed $T > 0$, we can define the probability measure $\widetilde \bbP$, equivalent to $\bbP$ on $(\Omega, \cF_T)$ by
\begin{equation*}
\frac{\dd \widetilde \bbP}{\dd \bbP} \biggr|_{\cF_T} = L_T.
\end{equation*}
By Girsanov's Theorem the process $\widetilde W$
\begin{equation}\label{eq:Wtilde}
\widetilde W_t  \coloneqq
I_t + \int_0^t \frac{\Pi_s(\bfb(s, \cdot, Y_s))}{\sigma(s, Y_s)} \dd s=
W_t + \int_0^t \frac{\bfb(s, \bfX_s, Y_s)}{\sigma(s, Y_s)} \dd s, \quad t \in [0,T],
\end{equation}
is a $((\cY_t)_{t \in [0,T]},\widetilde \bbP)$-Brownian motion.

We suppose that it is possible to represent local martingales under probability $\widetilde \bbP$ in the filtration generated by the Brownian motion $\widetilde W$ and by the jump measure $m^Y$. To be more precise, we introduce, for any $T > 0$,
\begin{align*}
&\bbG^T \coloneqq (\cG_t)_{t \in [0,T]},
&
&\cG_t \coloneqq \sigma\bigl(\widetilde W_s, \, 0 \leq s \leq t\bigr) \lor \cN, \\
&\bbH^T \coloneqq (\cH_t)_{t \in [0,T]},
&
&\cH_t \coloneqq \sigma\bigl(m^Y((0,s] \times B, \, 0 \leq s \leq t, \, B \in \cB(\R) \bigr) \lor \cN,
\\
&\bbG^T \vee \bbH^T \coloneqq (\cG_t \vee \cH_t)_{t \in [0,T]},
&
&\cG_t \vee \cH_t \coloneqq \sigma(\cG_t \cup \cH_t).
\end{align*}

\begin{assumption}\label{hp:martrepr}
We assume that for any $T > 0$ filtration $\bbG^T \vee \bbH^T$ is right-continuous and that any $(\bbG^T \vee \bbH^T, \widetilde \bbP)$-local martingale $\widetilde M = (\widetilde M_t)_{t \in [0,T]}$ admits the representation
\begin{align*}
\widetilde M_t= \widetilde M_0 + \int_0^t \widetilde \gamma_s \, \dd \widetilde W_s + \int_0^t \int_\R \widetilde U_s(y) (m^Y - \hat \bfmu^Y) (\dd s \,\dd y), \quad t \in [0,T],
\end{align*}
where $\widetilde \gamma$ and $\widetilde U$ are respectively a $\bbG^T \vee \bbH^T$-predictable process and a $\bbG^T \vee \bbH^T$-predictable random field, satisfying
\begin{equation}
\int_0^T |\widetilde \gamma_s|^2 \dd s < \infty, \quad
\int_0^T \int_\R |\widetilde U_s(y)| \, \hat \bfmu^Y(\dd s \, \dd y) < \infty, \quad \bbP\text{-a.s.}
\end{equation}
\end{assumption}
}

The main result of this section is given by the following theorem.

\begin{theorem}\label{T:mtg_rep}
{\color{blue}%
Suppose that 
Assumptions~\ref{hp:jointlaw}, \ref{hp:sdeexistuniq}, \ref{ass:girsanov}, and \ref{hp:martrepr} are verified.
Then, for any $T > 0$ filtration $(\cY_t)_{t \in [0,T]}$ is right-continuous and } every $(\cY_t)_{t \in [0,T]}$-local martingale $M$ admits the decomposition
\begin{align}\label{eq:martrepr}
M_t= M_0 + \int_0^t \gamma_s \, \dd I_s + \int_0^t \int_\R U_s(y) (m^Y - \hat \bfmu^Y) (\dd s \,\dd y), \quad t \in [0,T],
\end{align}
where $\gamma$ and $U$ are respectively a $(\cY_t)_{t \in [0,T]}$-progressive process and a $(\cY_t)_{t \in [0,T]}$-predictable random field, satisfying
\begin{equation}\label{int_cond}
\int_0^T |\gamma_t|^2 \, \dd t < \infty, \quad
\int_0^T \int_\R |U_t(y)| \, \hat \bfmu^Y(\dd t \, \dd y) < \infty, \quad \bbP\text{-a.s.}
\end{equation}

Moreover, $U$ can be chosen so that the process $\hat U$, defined by
\begin{equation}\label{eq:hatU}
\hat U_t \coloneqq \int_\R U_t(y) \, \ind_{t \leq T} \, \hat \bfmu^Y(\{t\} \times \dd y), \quad t \geq 0,
\end{equation}
satisfies
\begin{equation}\label{eq:hatUsupset}
J_Y \cap (\Omega \times [0,T]) \subset \{(\omega, t) \in \Omega \times (0,+\infty) \colon \hat U_t = 0\},
\end{equation}
 where $J_Y \coloneqq \{(\omega, t) \in \Omega \times (0,+\infty) \colon \hat \bfmu^Y(\omega; \, \{t\} \times \R) > 0\}$.
\end{theorem}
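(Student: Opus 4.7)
The plan is to apply a general martingale representation theorem for semimartingales whose law is uniquely determined by their $\bbY$-characteristics. I would organize the argument in four steps.

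First, I would identify the $\bbY$-semimartingale characteristics of $Y$. Combining SDE~\eqref{eq:YSDE}, the definition of the innovation $I$ in~\eqref{Innov}, and the $\bbY$-compensator $\hat\bfmu^Y$ from Proposition~\ref{prop:mYYcomp}, one obtains a $\bbY$-semimartingale decomposition of $Y$ with drift $\int_0^{\cdot} \Pi_s(\bfb(s,\cdot,Y_s))\,\dd s$, continuous $\bbY$-martingale part $\int_0^{\cdot} \sigma_s\,\dd I_s$, and jump part governed by $m^Y$ with $\bbY$-compensator $\hat\bfmu^Y$. In particular, the $\bbY$-characteristics of $Y$ are fully determined.

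Second, I would establish uniqueness of the $\bbY$-martingale problem associated with these characteristics. By Assumption~\ref{hp:sdeexistuniq}, SDE~\eqref{eq:Ysde} has a unique strong solution under $\bbF$; together with Assumption~\ref{ass:girsanov}, one can perform a Girsanov change of measure $\tilde\bbP \sim \bbP$ that removes the drift of $Y$, leaves $I$ a $\tilde\bbP$-Brownian motion, and preserves the $\bbY$-compensator of the jump measure. Under $\tilde\bbP$, the pair $(I, m^Y)$ has a tractable joint law, which yields uniqueness of the $\bbY$-martingale problem under $\tilde\bbP$, and hence, by equivalence of $\tilde\bbP$ and $\bbP$, also under $\bbP$.

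Third, I would invoke a Jacod--Yor-type representation theorem (see, e.g., Theorem~III.4.29 in~\citep{jacod:calcul} or Chapter~III.4 in~\citep{jacod2013:limit}): uniqueness of the martingale problem implies that every $(\cY_t)_{t\in[0,T]}$-local martingale $M$ admits the representation~\eqref{eq:martrepr}, with $\gamma$ a $\bbY$-progressive process and $U$ a $\bbY$-predictable random field satisfying the integrability conditions~\eqref{int_cond}. Fourth, to obtain the normalization~\eqref{eq:hatUsupset}, I would modify $U$ on the $\bbY$-predictable set $J_Y$: at each atom $\tau$ of $\hat\bfmu^Y$ in $[0,T]$, the stochastic integral only sees the difference between $U_\tau(\Delta Y_\tau)$ and its $\hat\bfmu^Y$-average $\hat U_\tau$, so one can choose a version of $U$ within its $\hat\bfmu^Y$-equivalence class for which $\hat U = 0$ on $J_Y \cap (\Omega \times [0,T])$, without altering the value of the stochastic integral.

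The main obstacle is the presence of $\bbY$-predictable jumps for $Y$, which breaks the quasi-left-continuity assumption underlying the most common formulations of the representation theorem; atoms of $\hat\bfmu^Y$ on $J_Y$ must be handled with care, and the final normalization step is precisely what selects the correct representative of $U$ in this non quasi-left-continuous setting. A further delicate point is that the drift $\bfb(s,\bfX_{s\wedge\cdot},Y_s)$ is a functional of the non-observable signal and is not $\bbY$-adapted: the role of the innovation is exactly to convert the $\bbF$-dynamics of $Y$ into genuine $\bbY$-characteristics, to which the representation theorem applies.
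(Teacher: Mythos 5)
Your overall architecture (Girsanov to a measure $\widetilde\bbP$ under which the drift disappears, a representation theorem under $\widetilde\bbP$, and a final normalization of $U$ on the atoms of $\hat\bfmu^Y$) is in the right spirit, and your Step 4 is essentially the paper's own normalization argument via \citep[Th.~11.17]{he:semimart}. However, there is a genuine gap at the heart of your Step 2/Step 3: you invoke a Jacod--Yor-type theorem, which requires that $\bbP$ (equivalently $\widetilde\bbP$) be an \emph{extremal} point of the set of laws under which $Y$ has the prescribed $\bbY$-characteristics, and you assert that this uniqueness ``follows'' from strong uniqueness of the $\bbF$-SDE plus Girsanov. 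It does not. The $\bbY$-characteristics of $Y$ are not explicit functionals of the path of $Y$: both the drift $\Pi_s(\bfb(s,\cdot,Y_s))$ and the jump compensator $\hat\bfmu^Y$ of Proposition~\ref{prop:mYYcomp} are built from the filter $\Pi_{s^-}$, for which no closed-form path functional is available (deriving its equation is the purpose of the paper, and even uniqueness of that equation is explicitly left open). Strong uniqueness of~\eqref{eq:Ysde} is a statement about the $\bbF$-dynamics and says nothing about uniqueness of a martingale problem posed in the smaller filtration $\bbY$ with filter-dependent characteristics; and the claim that under $\widetilde\bbP$ ``the pair $(I,m^Y)$ has a tractable joint law'' is precisely the point that needs proof, since the $\bbY$-compensator of $m^Y$ is unchanged by the (continuous) Girsanov density and still involves $\Pi_{s^-}$.

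The paper circumvents this obstruction entirely, and this is the idea missing from your proposal: after the change of measure it shows $\bbY^T=\bbG^T\vee\bbH^T$, where $\bbG^T$ is generated by the $\widetilde\bbP$-Brownian motion $\widetilde W$ and $\bbH^T$ by the counting measure $m^Y$, and these two filtrations are \emph{independent} by Assumption~\ref{hp:indep}. Each factor has the predictable representation property for free --- the Brownian one classically, and the jump one by \citep[Th.~4.37]{jacod2013:limit}, which gives the PRP of an integer-valued random measure \emph{in its own filtration} with no uniqueness or quasi-left-continuity hypothesis. The PRP for the join then follows from \citep[Corollary~2]{hewang1982:prp}, and one transfers back to $\bbP$ via It\^o's product rule applied to $M=\widetilde M L$. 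If you want to salvage your route, you would have to actually prove extremality of $\widetilde\bbP$ for the $\bbY$-martingale problem, which in this non-Markovian, non quasi-left-continuous setting is not a routine verification; as written, the step is asserted rather than proved.
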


\begin{proof}
See Appendix \ref{Section_Appendix_7}.
\end{proof}


%
%

\section{The filtering equation}\label{sec:filter}

\subsection{\texorpdfstring{The filtering equation for $\Pi$}{The filtering equation for Pi}}\label{sec:filterPi}
In this section we derive the SDE that characterizes the filter $\Pi$, see Theorem \ref{th:filtering_equation}. 
We introduce the following random measures:
\begin{align}
\eta_f^i\bigl((0,t] \times B\bigr) &\coloneqq \int_0^t \biggl\{\int_H \Pi_{s^-}\bigl(f(\cdot) \, \ind_{d^{i,K}(B, s, \cdot, Y_{s^-})}(h) \,\bflambda^m(s, \cdot) \bfQ^m(s, \cdot ; \dd h)\bigr) \notag
\\
&\qquad + \int_Z \Pi_{s^-}\bigl(f(\cdot) \, \ind_{d^{i,G}(B, s, \cdot, Y_{s^-})}(z)  \,\bflambda^n(s, \cdot, Y_{s^-}) \bfQ^n(s, \cdot, Y_{s^-}; \dd z)\bigr) \biggr\} \, \dd s, \label{eq:etafi}
\\
\eta_f^{p,m}\bigl((0,t] \times B\bigr) &\coloneqq \int_0^t \int_H \Pi_{s^-}\bigl(f(\cdot) \, \ind_{d^{p,K}(B,s, \cdot, Y_{s^-})}(h) \,\bfR^m(s, \cdot; \dd h)\bigr) \, \dd p_s^m, \label{eq:etafpm}
\\
\eta_f^{p,n}\bigl((0,t] \times B\bigr) &\coloneqq \int_0^t \int_Z \Pi_{s^-}\bigl(f(\cdot) \, \ind_{d^{p,G}(B,s, \cdot, Y_{s^-})}(z) \,\bfR^n(s, \cdot, Y_{s^-}; \dd z)\bigr) \, \dd p_s^n, \label{eq:etafpn}
\\
\rho_f^i\bigl((0,t] \times B\bigr) &\coloneqq \int_0^t \int_H \Pi_{s^-}\bigl([f(h) - f(\cdot)] \ind_{d^{i,K}(B, s, \cdot, Y_{s^-})}(h) \,\bflambda^m(s, \cdot) \bfQ^m(s, \cdot ; \dd h)\bigr) \, \dd s, \label{eq:rhofi}
\\
\rho_f^{p,m}\bigl((0,t] \times B\bigr) &\coloneqq \int_0^t \int_H \Pi_{s^-}\bigl([f(h) - f(\cdot)] \, \ind_{d^{p,K}(B,s, \cdot, Y_{s^-})}(h) \,\bfR^m(s, \cdot; \dd h)\bigr) \, \dd p_s^m, \label{eq:rhofpm}
\end{align}
where $t > 0$, $B \in \cB(\R)$ and $f \colon H \rightarrow \R$ is any bounded and measurable function. We also introduce the  random measures:
\begin{align}
\eta^i\bigl((0,t] \times B\bigr) &\coloneqq \eta_\one^i\bigl((0,t] \times B\bigr), \label{eq:etai}
\\
\eta^{p,m}\bigl((0,t] \times B\bigr) &\coloneqq \eta_\one^{p,m}\bigl((0,t] \times B\bigr), \label{eq:etapm}
\\
\eta^{p,n}\bigl((0,t] \times B\bigr) &\coloneqq \eta_\one^{p,n}\bigl((0,t] \times B\bigr), \label{eq:etapn}
\end{align}
where $\one \colon H \to \R$ denotes the constant function equal to $1$ (cf. Section \ref{sec:notation}). We see that
$$
\hat \bfmu^Y(\dd t \, \dd y) = \eta^i(\dd t \, \dd y) + \eta^{p,m}(\dd t \, \dd y) + \eta^{p,n}(\dd t \, \dd y).
$$
Next, we define the following $\bbY$-predictable random sets:
\begin{align}
J_Y &\coloneqq \{(\omega,t) \in \Omega \times [0,+\infty) \colon \hat \bfmu^Y(\{t\} \times \R) > 0\}, \label{eq:JY}
\\
J_Y^m &\coloneqq \{(\omega,t) \in \Omega \times [0,+\infty) \colon \eta^{p,m}(\{t\} \times \R) > 0\}, \label{eq:JYm}
\\
J_Y^n &\coloneqq \{(\omega,t) \in \Omega \times [0,+\infty) \colon \eta^{p,n}(\{t\} \times \R) > 0\}. \label{eq:JYn}
\end{align}
Thanks to Assumption~\ref{hp:jointlaw}--\eqref{hp:m_n_disjoint}, the set $J_Y$ is the disjoint union of $J_Y^m$ and $J_Y^n$.

Finally, we define for all $t > 0$ operators $\cL_t \colon \dB_b(H) \to \dB_b(H)$ and $\cA_t \colon \dB_b(H) \to \dB_b(H)$ as
\begin{align}
\cL_t f(h') &\coloneqq \int_H \bigl[f(h) - f(h')\bigr] \, \bflambda^m(t, h') \, \bfQ^m(t, h'; \, \dd h), \quad h' \in H, \label{eq:opL} \\
\cA_t f(h') &\coloneqq \int_H \bigl[f(h) - f(h')\bigr] \, \bfR^m(t, h'; \, \dd h), \quad h' \in H. \label{eq:opA}
\end{align}

Our ultimate objective is to prove the following Theorem which provides the filtering equation for the process $\Pi$.

\begin{theorem}\label{th:filtering_equation}
Under Assumptions~\ref{hp:jointlaw}, \ref{hp:sdeexistuniq}, \ref{ass:girsanov}, and \ref{hp:martrepr}, for each $T > 0$ and for any bounded and measurable function $f \colon H \rightarrow \R$ the filter $\Pi$ satisfies the following SDE:
\begin{align}\label{eq:filter}
\Pi_t(f)&= \Pi_0(f) + \int_0^t \Pi_{s^-}\bigl(\cL_s f\bigr) \, \dd s + \int_0^t \Pi_{s^-}\bigl(\cA_s f\bigr) \, \dd p^m_s\\
&+ \int_0^t    \gamma_s\, \dd I_s+\int_0^t \int_\R U_s(y)  \, (m^Y- \hat\bfmu^Y)(\dd s\,\dd y), \quad t \in [0,T],
\end{align}
where
\begin{align}
&\gamma_t = \frac{\Pi_{t}\bigl( f(\cdot) \, \bfb(t, \cdot, Y_t)\bigr)- \Pi_t(f)\,\Pi_t\bigl(\bfb(t, \cdot, Y_t)\bigr)}{\sigma(t, Y_t)},  &\dd \bbP \otimes \dd t\text{-a.e.},\label{gamma}
\end{align}
and $U$ satisfies\footnote{Superscript $c$ in $J_Y^c$ stands for the complement set.}
\begin{equation}\label{eq:U}
U_t(y) = U^i_t(y) \, \ind_{J_Y^c}(t) + U^{p,m}_t(y) \, \ind_{J_Y^m}(t) + U^{p,n}_t(y) \, \ind_{J_Y^n}(t), \quad \dd \bbP \otimes \dd \hat \bfmu^Y\text{-a.e.},
\end{equation}
where
\begin{align}
U^i_t(y) &\coloneqq \frac{\dd \eta_f^i}{\dd \eta^i}(t,y)-\Pi_{t^-}(f)+ \frac{\dd \rho_f^i}{\dd \eta^i}(t,y), \label{eq:Ui}
\\
U^{p,m}_t(y) &\coloneqq \frac{\dd \eta_f^{p,m}}{\dd \eta^{p,m}}(t,y)-\Pi_{t^-}(f)+ \frac{\dd \rho_f^{p,m}}{\dd \eta^{p,m}}(t,y), \label{eq:Upm}
\\
U^{p,n}_t(y) &\coloneqq \frac{\dd \eta_f^{p,n}}{\dd \eta^{p,n}}(t,y)-\Pi_{t^-}(f). \label{eq:Upn}
\end{align}
\end{theorem}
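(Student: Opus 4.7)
Following the classical innovations approach, my plan proceeds in three stages: derive the $\bbF$-semimartingale decomposition of $f(\bfX_t)$, project it onto the observed filtration $\bbY$ and invoke the martingale representation theorem, and finally identify the integrands $\gamma$ and $U$ explicitly.

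First, since $\bfX$ is pure-jump with random measure $\bfm$ and $\bbF$-compensator $\bfmu$ of the form \eqref{eq:bfmu} (Proposition \ref{prop:bfmudualpredproj}), Dynkin's formula immediately gives
\begin{equation*}
f(\bfX_t) = f(\bfX_0) + \int_0^t \cL_s f(\bfX_{s^-}) \, \dd s + \int_0^t \cA_s f(\bfX_{s^-}) \, \dd p^m_s + M^f_t,
\end{equation*}
where $M^f_t = \int_0^t \int_H [f(h) - f(\bfX_{s^-})] \, (\bfm - \bfmu)(\dd s \, \dd h)$ is an $\bbF$-local martingale. Taking the $\bbY$-optional projection and exploiting the $\bbY$-predictability of $p^m$ (Assumption \ref{hp:mu}-$(iii)$), the Fubini/conditional-expectation argument used in the proof of Proposition \ref{prop:mYYcomp}, together with \citep[Ch.~VI, Theorem 57]{dellacheriemeyer:B}, shows that $\int_0^t \Pi_{s^-}(\cL_s f) \, \dd s$ and $\int_0^t \Pi_{s^-}(\cA_s f) \, \dd p^m_s$ are the $\bbY$-dual predictable projections of the two drifts. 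Hence $\Pi_t(f)$ minus these two drifts is a $\bbY$-local martingale, to which Theorem \ref{T:mtg_rep} applies, yielding \eqref{eq:filter} for some $\bbY$-predictable $\gamma$ and $U$ satisfying \eqref{int_cond} and \eqref{eq:hatUsupset}.

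To identify $\gamma$, I would compute the sharp bracket $\langle \Pi(f), I \rangle$ in two ways. From the above representation, since $I$ is a $\bbY$-Brownian motion while $m^Y - \hat\bfmu^Y$ is purely discontinuous, this bracket equals $\int_0^\cdot \gamma_s \, \dd s$. Alternatively, applying It\^o's formula to $f(\bfX_t) Y_t$ under $\bbF$, projecting onto $\bbY$, and using definition \eqref{Innov} of the innovation, the absolutely continuous drift arising from the $W$-integral of $Y$ produces $\int_0^\cdot \gamma_s \, \sigma(s,Y_s) \, \dd s$, which must match $\int_0^\cdot [\Pi_s(f(\cdot)\,\bfb(s,\cdot,Y_s)) - \Pi_s(f)\,\Pi_s(\bfb(s,\cdot,Y_s))] \, \dd s$. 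Division by $\sigma(s,Y_s) > 0$ (Assumption \ref{hp:sdeexistuniq}) yields \eqref{gamma}.

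For $U$, I would match $\bbY$-compensated jump contributions: for any bounded $\bbY$-predictable random field $C$, computing $\bbE\bigl[\int_0^t \int_\R C_s(y) \, \Delta\Pi_s(f) \, m^Y(\dd s \, \dd y)\bigr]$ in two ways---once through the $\bbY$-compensator of $m^Y$ from Proposition \ref{prop:mYYcomp}, and once by expressing $\Delta\Pi_s(f)$ via the $\bbF$-compensator $\bfmu^Y$ of Proposition \ref{prop:mYFcomp} and conditioning under $\cY_{s^-}$---produces integrals against the random measures $\eta^i$, $\eta^{p,m}$, $\eta^{p,n}$, $\eta_f^i$, $\eta_f^{p,m}$, $\eta_f^{p,n}$, $\rho_f^i$, $\rho_f^{p,m}$ defined in \eqref{eq:etafi}--\eqref{eq:rhofpm}. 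Restricting these identities to each of the three disjoint $\bbY$-predictable random sets $J_Y^c$, $J_Y^m$, $J_Y^n$ (disjointness of $J_Y^m$ and $J_Y^n$ following from Assumption \ref{hp:indep}) and applying the Radon--Nikodym theorem identifies $U_t$ with \eqref{eq:Ui}--\eqref{eq:Upn}. This last step is the main obstacle: at $\tau \in J_Y^m$ the history process $\bfX$ and the observation $Y$ may jump jointly, producing both the subtraction $\Pi_{\tau^-}(f)$ required so that $\hat U$ vanishes on $J_Y$ (as mandated by \eqref{eq:hatUsupset}) and the correction $\dd \rho_f^{p,m}/\dd \eta^{p,m}$ encoding the coincident jump of $\bfX$, whereas at $\tau \in J_Y^n$ only $Y$ jumps and no $\rho$-type term appears in \eqref{eq:Upn}. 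Keeping this bookkeeping consistent across the three regimes, and verifying that the abstract $U$ delivered by Theorem \ref{T:mtg_rep} admits this explicit disintegration, is the crux of the argument.
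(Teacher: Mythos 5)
Your proposal is correct and follows essentially the same route as the paper: the same three-step innovations argument (semimartingale decomposition plus the martingale representation theorem of Theorem~\ref{T:mtg_rep}, identification of $\gamma$ by comparing two special semimartingale decompositions of a product involving the continuous part of the observation, and identification of $U$ by testing against bounded $\bbY$-predictable fields $C$ and splitting over the disjoint sets $J_Y^c$, $J_Y^m$, $J_Y^n$). The only cosmetic difference is that the paper works with the auxiliary continuous process $\widetilde W_t = W_t + \int_0^t \bfb_s/\sigma_s\,\dd s$ rather than with $Y$ itself or the bracket $\langle \Pi(f), I\rangle$, which avoids having to strip out the jump contributions of $Y$ in the $\gamma$-identification step; otherwise your bookkeeping of the $\rho$-corrections on $J_Y^m$ versus their absence on $J_Y^n$ matches the paper's argument.
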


\begin{remark}
Theorem \ref{th:filtering_equation} above provides existence of the filtering equation, but does not guarantee uniqueness of the solution. The latter is important in many applications, as for instance in control theory under partial information. However, due to the specific features of our model, uniqueness cannot be obtained by a direct application of existing results and for this reason we have therefore  decided to postpone the issue of uniqueness to a future work.
\end{remark}

The proof of Theorem \ref{th:filtering_equation} is based on the {\em innovations approach}. This technique allows to derive the dynamics of the filter as a solution of a stochastic differential equation driven by the the pair $(I, m^Y-\hat \bfmu^Y)$ (see Section~\ref{sec:martreprth}). The term {\em innovation} stands for the fact that the pair $(I,m^Y-\hat \bfmu^Y)$ carries any new available information on the signal.

We first state a preliminary lemma, 
whose proof is given in Appendix \ref{Section_Appendix_6}.

\begin{lemma}\label{L: martY}
Under Assumption~\ref{hp:jointlaw}, for any bounded and measurable $f \colon H \to \R$, the process $M^f \coloneqq (M_t^f)_{t \geq 0}$ defined by
	\begin{equation}\label{eq:Mtf}
		M_t^f \coloneqq \bbE \biggl[\int_0^t \int_H [f(h)- f(\bfX_{s-})]\,\bfmu(\dd s\, \dd h) \bigg |\cY_t\biggr] - \int_0^t \Pi_{s^-}\bigl(\cL_s f\bigr) \, \dd s - \int_0^t \Pi_{s^-}\bigl(\cA_s f\bigr) \, \dd p^m_s
	\end{equation}
is a $\bbY$-martingale.
\end{lemma}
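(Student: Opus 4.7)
The plan is to rewrite $M^f$ in tractable form using Proposition~\ref{prop:bfmudualpredproj} and then verify the $\bbY$-martingale property by a direct two-time conditioning. By the expression~\eqref{eq:bfmu} for $\bfmu$ and the definitions~\eqref{eq:opL}--\eqref{eq:opA},
$$
\int_0^t \!\!\int_H \bigl[f(h)-f(\bfX_{s^-})\bigr]\, \bfmu(\dd s\, \dd h) = \int_0^t (\cL_s f)(\bfX_{s^-})\,\dd s + \int_0^t (\cA_s f)(\bfX_{s^-})\,\dd p_s^m =: A_t^f,
$$
so that $M_t^f = \bbE[A_t^f \mid \cY_t] - B_t$ with $B_t := \int_0^t \Pi_{s^-}(\cL_s f)\,\dd s + \int_0^t \Pi_{s^-}(\cA_s f)\,\dd p_s^m$; $B$ is $\bbY$-predictable, and boundedness of $f$ together with~\eqref{eq:lambdambdd} and~\eqref{eq:ptmbdd} yields integrability of $A^f$ and $B$. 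Since $B_s$ is $\cY_s$-measurable, the $\bbY$-martingale property of $M^f$ reduces to
$$
\bbE[A_t^f - A_s^f \mid \cY_s] = \bbE[B_t - B_s \mid \cY_s], \qquad 0 \leq s < t.
$$

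I would handle the two components of the increment separately. For the absolutely continuous part, Fubini gives $\bbE[\int_s^t (\cL_r f)(\bfX_{r^-})\,\dd r \mid \cY_s] = \int_s^t \bbE[(\cL_r f)(\bfX_{r^-}) \mid \cY_s]\,\dd r$; at each fixed $r$, $\bfX_{r^-} = \bfX_r$ a.s.\ (since $\bfX$ has at most countably many jumps), so conditioning through $\cY_r$ and using the definition of the filter yields $\bbE[(\cL_r f)(\bfX_{r^-}) \mid \cY_s] = \bbE[\Pi_r(\cL_r f) \mid \cY_s]$; and because $\Pi$ is c\`adl\`ag with at most countably many jumps, $\Pi_r(\cL_r f) = \Pi_{r^-}(\cL_r f)$ for Lebesgue-a.e.\ $r$, so a second Fubini gives the desired equality for the Lebesgue increment.

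For the $\dd p_r^m$ part, the argument runs exactly parallel to the final step of the proof of Proposition~\ref{prop:mYYcomp}. Since $p^m$ is $\bbY$-predictable and integrable, the left-continuous $\bbY$-adapted process $r \mapsto \Pi_{r^-}(\cA_r f)$ is itself $\bbY$-predictable and serves as a version of the $\bbY$-predictable projection of the $\bbF$-predictable process $r \mapsto (\cA_r f)(\bfX_{r^-})$; applying Theorem~VI.57 of \citet{dellacheriemeyer:B} against $\dd p^m_r$ then yields the required identity for this second increment, completing the verification.

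The main obstacle is precisely this last identification of the $\bbY$-predictable projection, which is the delicate point in the non quasi-left-continuous setting. Assumption~\ref{hp:mu}-(iii) is essential here: the $\bbY$-predictability of $p^m$ guarantees that its jump times are $\bbY$-predictable stopping times, so that $\cY_{\tau^-}$ is meaningful and the filter property $\Pi_{\tau^-}(g) = \bbE[g(\bfX_{\tau^-}) \mid \cY_{\tau^-}]$ can be invoked at these times, without having to re-project predictable times from the larger filtration $\bbF$---a projection which, as discussed at the end of Section~\ref{sec:model}, can fail in general.
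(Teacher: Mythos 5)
Your proposal is correct and follows essentially the same route as the paper's own proof: rewrite the $\bfmu$-integral as $\int_0^t (\cL_s f)(\bfX_{s^-})\,\dd s + \int_0^t (\cA_s f)(\bfX_{s^-})\,\dd p^m_s$, reduce the martingale property to a two-time conditional identity, and identify $\Pi_{s^-}(\cL_s f)$ and $\Pi_{s^-}(\cA_s f)$ as the relevant $\bbY$-projections via the Dellacherie--Meyer/He--Wang projection theorem, with the $\bbY$-predictability of $p^m$ from Assumption~\ref{hp:mu}-(iii) doing the essential work for the second integral. The only (immaterial) difference is that you handle the Lebesgue part by an elementary Fubini/tower argument where the paper invokes the projection theorem for both terms.
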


The remainder of this section is dedicated to the proof of Theorem \ref{th:filtering_equation}.
In order to help readability and to underline the differences with the classical results we summarize here the content of the proof.

The structure of filtering equation~\eqref{eq:filter} is similar to existing results in the literature, with noise source given by the innovation $(I, m^Y - \hat \bfmu^Y)$ (see Section~\ref{sec:martreprth}). In particular, the integral with respect to the measure $m^Y - \hat \bfmu^Y$ can be decomposed into three integrals with respect to specific compensated random measures, accounting for totally inaccessible jumps of $Y$ and predictable jumps of $Y$, the latter given by $p^m$ and $p^n$. In each of these integrals, integrand processes \eqref{eq:Ui}, \eqref{eq:Upm} and \eqref{eq:Upn} have the typical structure given by the sum of three components, as in~\citet[Ch. IV, (1.15)]{bremaud:pp}.

Moving to more technical aspects, the proof consists of three steps:
\begin{itemize}
\item[(i)] In Step 1 we provide the semimartingale representation of the conditional expectation of the process the process $\bigl(f(\bfX_t)\bigr)_{t \geq 0}$ by projecting $f(\bfX_t)$ over the $\sigma$-algebra $\cY_t$. This part is standard: we use the Martingale Representation Theorem \ref{T:mtg_rep} to provide the general structure of the filtering equation in terms of processes $\gamma$ and $U$, as in equation~\eqref{eq:martrepr}, to be determined explicitly. 
\item[(ii)] In Step 2 we characterize the process $\gamma$. To do this, we use the identity
\begin{align}\label{eq:tower2}
\mathbb{E}\left[f(\bfX_t)  \widetilde W_t | \cY_t\right] = \Pi_t(f) \, \widetilde W_t,
\end{align}
for a suitable $\bbY$-adapted process $\widetilde W$. In this step, although the general procedure of the innovations approach stays the same, particular care must be taken to handle predictable jumps in the signal. In fact, the processes on the right and left hand sides of equality \eqref{eq:tower2} need to have the same special semimartingale decomposition and hence the same finite variation and local martingale parts. To identify local martingales, the hypothesis that $p^m$ is $\bbY$-predictable together with condition~\eqref{eq:ptmbdd} are fundamental.
\item[(iii)] In Step 3 we compute the process $U$. This step uses the identity
\begin{align}\label{eq:tower3}
\mathbb{E}\left[f(\bfX_t)  \psi_t | \cY_t\right] = \Pi_t(f) \, \psi_t,
\end{align}
with a suitable $\bbY$-adapted process $\psi$.
The most delicate and technical part of the proof is again to identify local martingales for the processes in equality~\eqref{eq:tower3}. Here we extensively use the crucial fact that both $p^m$ and $p^n$ are $\bbY$-predictable.
Finally, using that random sets $J_Y^m$, $J^n_Y$ and $J_Y^c$ in \eqref{eq:JYm}, \eqref{eq:JYn} and \eqref{eq:JY}, respectively, are pairwise disjoint allows us to decompose $U$ into three parts~\eqref{eq:Ui}, \eqref{eq:Upm} and \eqref{eq:Upn}.
\end{itemize}

\begin{proof}[Proof of Theorem \ref{th:filtering_equation}]
Fix $f \colon H \to \R$ bounded and measurable and $T > 0$.
By definition of the filter $\Pi$ given in~\eqref{eq:Pidef}, we need to compute the $\bbY$-optional projection of the process $\bigl(f(\bfX_t)\bigr)_{t \geq 0}$.

In order to ease the presentation of the proof we divide it into several steps.

\medskip
\noindent\textbf{Step 1. (Semimartingale representation of the filter).}

Consider first the $\bbF$-semimartingale decomposition of the process $\bigl(f(\bfX_t)\bigr)_{t \geq 0}$. Since this is a real-valued pure-jump process, whose jumps are governed by those of $\bfX$, we easily obtain
\begin{align}\label{dec}
f(\bfX_{t}) &= f(\bfX_{0})+ \int_0^t \int_H [f(h)- f(\bfX_{s-})]\,\bfmu(\dd s\, \dd h) \notag\\
&+ \int_0^t \int_H [f(h)- f(\bfX_{s-})]\,(\bfm-\bfmu)(\dd s\, \dd h).
\end{align}
Then the process
\begin{align*}
f(\bfX_{t}) - f(\bfX_{0})- \int_0^t \int_H [f(h)- f(\bfX_{s-})]\,\bfmu(\dd s\, \dd h), \qquad t \geq 0
\end{align*}
is an $\bbF$-martingale. Taking conditional expectation with respect to $\cY_t$ in \eqref{dec} we get
\begin{align*}
&\Pi_{t}(f) - \bbE[f(\bfX_{0})|\cY_t]- \bbE \biggl[\int_0^t \int_H [f(h)- f(\bfX_{s-})]\,\bfmu(\dd s\, \dd h) \bigg |\cY_t\biggr]\\
& = \bbE \biggl[\int_0^t \int_H [f(h)- f(\bfX_{s-})]\,(\bfm-\bfmu)(\dd s\, \dd h)\bigg |\cY_t\biggr], \quad t \geq 0.
\end{align*}
Next, define for any $t \geq 0$
\begin{align*}
\dI^{(1)}_t &\coloneqq \bbE[f(\bfX_{0}) \mid \cY_t] - \Pi_0(f),\\
\dI^{(2)}_t &\coloneqq \bbE \biggl[\int_0^t \int_H [f(h)- f(\bfX_{s-})]\,\bfmu(\dd s\, \dd h) \bigg |\cY_t\biggr],\\
\dI^{(3)}_t &\coloneqq \bbE \biggl[\int_0^t \int_H [f(h)- f(\bfX_{s-})]\,(\bfm-\bfmu)(\dd s\, \dd h)\bigg |\cY_t\biggr].
\end{align*}
The processes $\dI^{(1)}$ and $\dI^{(3)}$ are $\bbY$-martingales. For the process $\dI^{(2)}$, by applying  Lemma~\ref{L: martY} we get
\begin{equation*}
\dI^{(2)}_t = M_t^f + \int_0^t \Pi_{s^-}\bigl(\cL_s f\bigr) \, \dd s + \int_0^t \Pi_{s^-}\bigl(\cA_s f\bigr) \, \dd p^m_s, \quad t \geq 0,
\end{equation*}
where $M^f$ is the $\bbY$-martingale defined in~\eqref{eq:Mtf}.
By the martingale representation theorem (Theorem~\ref{T:mtg_rep}), there exist a $(\cY_t)_{t \in [0,T]}$-predictable random field $U$ and a $(\cY_t)_{t \in [0,T]}$-progressive process $\gamma$ satisfying \eqref{int_cond}, such that
\begin{align}\label{Piformula}
\Pi_t(f)-\Pi_0(f)- \int_0^t \Pi_{s^-}\bigl(\cL_s f\bigr) \, \dd s - \int_0^t \Pi_{s^-}\bigl(\cA_s f\bigr) \, \dd p^m_s
\notag\\
= \int_0^t \gamma_s \, \dd I_s + \int_0^t \int_\R U_s(y)\, (m^Y - \hat \bfmu^Y)(\dd s\, \dd y), \quad t \in [0,T].
\end{align}
The goal now is to identify processes $\gamma$ and $U$.

\newpage

\noindent\textbf{Step 2. (Identification of $\gamma$).}

Let $\widetilde W$ be the $\bbY$-adapted process defined as (cf.~\eqref{eq:Wtilde})
$$
\widetilde W_t : = W_t + \int_0^t \frac{\bfb(s, \bfX_s, Y_s)}{\sigma(s, Y_s)} \dd s=I_t+ \int_0^t \frac{\Pi_{s}(\bfb(s, \cdot, Y_s))}{\sigma(s, Y_s)} \dd s, \quad t \in [0,T].
$$

We need to compute in two distinct ways the $(\cY_t)_{t \in [0,T]}$-optional projection of the process $\bigl(f(\bfX_t) \, \widetilde W_t\bigr)_{t \in [0,T]}$. This provides us two distinct (special) semimartingale decompositions of the process $\bigl(\Pi_t(f) \, \widetilde W_t\bigr)_{t \in [0,T]}$ with respect to $(\cY_t)_{t \in [0,T]}$ and, thanks to uniqueness of this kind of decomposition, we get the expression for $\gamma$ comparing their finite variations parts.

On the one hand, using the integration by parts formula (see, e.g., \citep[Th. 14.1.1]{cohen:stochcalculus}) we get\footnote{If $M$ and $N$ are two semimartingales, the symbol $[M,N]$ denotes the (optional) quadratic covariation of $M$ and $N$.}
\begin{equation}
\begin{aligned}
f(\bfX_{t}) \, \widetilde W_t &=\int_0^t   \widetilde W_{s-}  \, \dd(f(\bfX_s)) +  \int_0^t f(\bfX_{s})\,  \dd \widetilde W_s + \bigl[\widetilde W, f(\bfX_\cdot)]_t\\
&= \int_0^t \widetilde W_{s-}  \int_H [f(h)- f(\bfX_{s-})]\,\bfmu(\dd s\, \dd h) \\
&+  \int_0^t \widetilde W_{s-}  \int_H [f(h)- f(\bfX_{s-})]\,(\bfm-\bfmu)(\dd s\, \dd h)\\
&+ \int_0^t f(\bfX_{s^-})\,  \dd  W_s  + \int_0^t f(\bfX_{s})\,  \frac{\bfb(s, \bfX_s, Y_s)}{\sigma(s, Y_s)} \, \dd s.
\end{aligned}
\end{equation}
Applying the conditional expectation with respect to $\cY_t$, the previous equality yields
\begin{align}
\bbE[f(\bfX_{t}) \, \widetilde W_t|\cY_t]
&= \bbE\biggr[\int_0^t  \widetilde W_{s-}  \int_H [f(h)- f(\bfX_{s-})]\,\bfmu(\dd s\, \dd h)\bigg|\cY_t\biggl] \notag
\\
&+  \bbE\biggr[\int_0^t  \widetilde W_{s-}  \int_H [f(h)- f(\bfX_{s-})]\,(\bfm-\bfmu)(\dd s\, \dd h)\bigg|\cY_t\biggl] \notag
\\
& + \bbE\biggr[\int_0^t f(\bfX_{s})\,  \dd  W_s\bigg|\cY_t\biggl]
+  \bbE\biggr[\int_0^t f(\bfX_{s})\,  \frac{\bfb(s, \bfX_s, Y_s)}{\sigma(s, Y_s)} \, \dd s\bigg|\cY_t\biggl]. \label{Tocompare_1}
\end{align}
Notice that, since $\widetilde W_t$ is $\cY_t$-measurable, the right hand side of equality \eqref{Tocompare_1} is nothing but $\Pi_t(f) \, \widetilde W_t$.

Boundedness of $f$ implies that $\int_0^t f(\bfX_{s^-})\,  \dd  W_s$, $t \in [0,T]$, is a $(\cF_t)_{t \in [0,T]}$-martingale, hence its optional projection, appearing in~\eqref{Tocompare_1}, is a $(\cY_t)_{t \in [0,T]}$-martingale.
Thanks to this assumption on $f$ and to Assumption~\ref{hp:jointlaw}, we can show that $\int_0^t \widetilde W_{s-} \int_H [f(h)- f(\bfX_{s-})]\,(\bfm-\bfmu)(\dd s\, \dd h)$, $t \in [0,T]$, is a $(\cF_t)_{t \in [0,T]}$-local martingale. As a matter of fact, defining the $(\cF_t)_{t \in [0,T]}$-stopping times $\tau_n \coloneqq \sup\{t \in [0,T] \colon \widetilde W_t \leq n\}$ ,we have:
\begin{align*}
&\mathrel{\phantom{\leq}} \bbE\biggl| \int_0^{T \land \tau_n} \widetilde W_{s-} \int_H [f(h)- f(\bfX_{s-})]\,\bfmu(\dd s\, \dd h) \biggr| \\
&\leq 2 \sup_{h \in H} |f(h)| \biggl\{\sup_{(t,h) \in [0,+\infty) \times H} \bflambda^m(t,h) \, \bbE\biggl[\int_0^{T \land \tau_n} |\widetilde W_s| \, \dd s\biggr] + \bbE\biggl[\int_0^{T \land \tau_n} |\widetilde W_{s^-}| \, \dd p^m_s\biggr] \biggr\} \\
&\leq 2 \sup_{h \in H} |f(h)| \biggl\{n T \sup_{(t,h) \in [0,+\infty) \times H} \bflambda^m(t,h) + n \, \bbE[p^m_T] \biggr\} < +\infty.
\end{align*}
From this, it is easy to show that its optional projection, appearing in~\eqref{Tocompare_1}, is a $(\cY_t)_{t \in [0,T]}$-local martingale.

Following the same lines (or simple modifications) of the proof of Lemma~\ref{L: martY} we have that the process
\begin{multline*}
\bbE \biggl[\int_0^t \widetilde W_{s-} \int_H [f(h)- f(\bfX_{s-})]\,\bfmu(\dd s\, \dd h) \bigg |\cY_t\biggr] - \int_0^t \widetilde W_s \, \Pi_s\bigl(\cL_s f\bigr) \, \dd s \\
- \int_0^t \widetilde W_{s-} \Pi_{s^-}\bigl(\cA_s f\bigr) \, \dd p^m_s, \quad t \in [0,T],
\end{multline*}
is a $(\cY_t)_{t \in [0,T]}$-local martingale and that the process
\begin{equation*}
\bbE\biggr[\int_0^t f(\bfX_{s})\,  \frac{\bfb(s, \bfX_s, Y_s)}{\sigma(s, Y_s)} \, \dd s \bigg |\cY_t\biggr] - \int_0^t \frac{\Pi_s\bigl(f(\cdot) \bfb(s, \cdot, Y_s)\bigr)}{\sigma(s, Y_s)}\, \dd s, \quad t \in [0,T]
\end{equation*}
is a $(\cY_t)_{t \in [0,T]}$-martingale. Therefore~\eqref{Tocompare_1} can be rewritten as:
\begin{multline}\label{Tocompare_1bis}
\Pi_t(f) \, \widetilde W_t=\bbE[f(\bfX_{t}) \, \widetilde W_t|\cY_t] = \int_0^t \widetilde W_s \, \Pi_s\bigl(\cL_s f\bigr) \, \dd s + \int_0^t \widetilde W_{s-} \Pi_{s^-}\bigl(\cA_s f\bigr) \, \dd p^m_s
\\ +  \int_0^t \frac{\Pi_s\bigl(f(\cdot) \bfb(s, \cdot, Y_s)\bigr)}{\sigma(s, Y_s)}\, \dd s + M^{(1)}_t,
\end{multline}
where $(M^{(1)}_t)_{t \in [0,T]}$ is a $(\cY_t)_{t \in [0,T]}$-local martingale.

On the other hand, since $\dd \widetilde W_t = \dd I_t + \int_0^t \frac{\Pi_s(\bfb(s, \cdot, Y_s))}{\sigma(s, Y_s)}   \,\dd s,$ (see equation \eqref{Innov}),
we have
\begin{equation*}
\begin{aligned}
\Pi_t(f) \, \widetilde W_t &= \int_0^t \Pi_{s^-}(f) \, \dd \widetilde W_s + \int_0^t \widetilde W_s \, \dd \Pi_s(f) + \bigl[\widetilde W, \Pi(f)]_t
\\
&= \int_0^t \Pi_{s}(f)\,  \dd I_s+  \int_0^t \Pi_{s}(f)\, \frac{\Pi_s(\bfb(s, \cdot, Y_s))}{\sigma(s, Y_s)}   \,\dd s
\\	
&+\int_0^t   \widetilde W_s  \, \Pi_s\bigl(\cL_s f\bigr)\,\dd s
+\int_0^t   \widetilde W_{s-}  \, \Pi_{s^-}\bigl(\cA_s f\bigr) \,\dd p^m_s
+  \int_0^t  \widetilde W_{s} \, \gamma_s \, \dd I_s + \int_0^t \gamma_s \, \dd s,
\end{aligned}
\end{equation*}
i.e., with a similar reasoning as before,
\begin{multline}\label{Tocompare_2}
\Pi_t(f) \, \widetilde W_t = \int_0^t \widetilde W_s \, \Pi_s\bigl(\cL_s f\bigr) \, \dd s + \int_0^t \widetilde W_{s-} \Pi_{s^-}\bigl(\cA_s f\bigr) \, \dd p^m_s \\
+ \int_0^t \Pi_{s}(f)\, \frac{\Pi_s(\bfb(s, \cdot, Y_s))}{\sigma(s, Y_s)} \,\dd s + \int_0^t \gamma_s \, \dd s + M^{(2)}_t, \quad t \in [0,T],
\end{multline}
where $(M^{(2)}_t)_{t \in [0,T]}$ is a $(\cY_t)_{t \in [0,T]}$-local martingale.

Formulas~\eqref{Tocompare_1bis} and~\eqref{Tocompare_2} entail that $(\Pi_t(f) \, \widetilde W_t)_{t \in [0,T]}$ is a $(\cY_t)_{t \in [0,T]}$-special semimartingale and, by uniqueness of the decomposition, the finite variation parts appearing in these two formulas must be equal. Hence
\begin{equation*}
\int_0^t \frac{\Pi_s\bigl(f(\cdot) \bfb(s, \cdot, Y_s)\bigr)}{\sigma(s, Y_s)}\, \dd s = \int_0^t \Pi_{s}(f)\, \frac{\Pi_s(\bfb(s, \cdot, Y_s))}{\sigma(s, Y_s)} \,\dd s + \int_0^t \gamma_s \, \dd s, \quad t \in [0,T],
\end{equation*}
whence the expression of $\gamma$ given in~\eqref{gamma}. Standard computations show that it satisfies~\eqref{int_cond}.

\medskip
\noindent\textbf{Step 3. (Identification of $U$)}.

Let us now turn to the process $U$. Consider any $\bbY$-adapted process $\psi = (\psi_t)_{t \geq 0}$ of the form $\psi_t \coloneqq \int_0^t \int_\R C_s (y)\, \bigl[m^Y-\hat\bfmu^Y](\dd s\, \dd y)$, where $C \colon \Omega \times [0,+\infty) \times \R \rightarrow \R$ is a $\bbY$-predictable bounded random field.
From~\citep[Th. 11.17]{he:semimart} and thanks to Assumptions~\ref{hp:jointlaw}-(ii)(iii) (see also point b. of Remark~\ref{rem:m_noexplosion}, Remark~\ref{rem:Dm} and~\eqref{eq:hatUsupset}), one can choose $U$ and $C$ so that the processes:
\begin{equation*}
\int_\R U_t(y) \, \hat \bfmu^Y(\{t\} \times \dd y) \, \text{ and } \, \int_\R C_t(y) \, \hat \bfmu^Y(\{t\} \times \dd y), \quad t \in [0,T],
\end{equation*}
are zero on the set $J_Y \cap (\Omega \times [0,T])$. This fact is used in what follows without further mention.

Similarly to what we did in the previous step, we need to compute in two distinct ways the $(\cY_t)_{t \in [0,T]}$-optional projection of the process $\bigl(f(\bfX_t) \, \psi_t\bigr)_{t \in [0,T]}$, then to compare the finite variation parts appearing in the semimartingale decompositions obtained.
On the one hand, using the integration by parts formula we get
{
\allowdisplaybreaks
\begin{align}
\nonumber f(\bfX_{t}) \, \psi_t
&= \int_0^t f(\bfX_{s^-}) \, \dd \psi_s + \int_0^t \psi_{s^-} \, \dd \bigl(f(\bfX_{s})\bigr) + \bigl[\psi, f(\bfX_\cdot)\bigr]_t
\\
\nonumber &= \int_0^t \int_\R f(\bfX_{s^-}) \, C_s(y) \, \bigl[m^Y-\bfmu^Y\bigr](\dd s\, \dd y)
\\
\nonumber&+ \int_0^t \int_\R f(\bfX_{s^-}) \, C_s(y) \, \bigl[\bfmu^Y-\hat\bfmu^Y\bigr](\dd s\, \dd y)
\\
\nonumber&+ \int_0^t \int_H \psi_{s^-} \, [f(h)- f(\bfX_{s-})]\,\bfmu(\dd s\, \dd h)
\\
&+ \int_0^t \int_H \psi_{s^-} \, [f(h)- f(\bfX_{s-})]\,(\bfm-\bfmu)(\dd s\, \dd h)
+\sum_{s \leq t} \Delta f(\bfX_s) \, \Delta \psi_s. \quad \label{eq:fPsiSmg}
\end{align}
}

Let us now analyze the last term appearing in this formula. It is clear that the processes $f(\bfX_\cdot)$ and $\psi$ jump simultaneously only at jump times relative to the coefficients $\bfK^i$ and $\bfK^p$ in the SDE of $Y$. Therefore, we have:
{
\allowdisplaybreaks
\begin{align*}
&\mathrel{\phantom{=}} \sum_{s \leq t} \Delta f(\bfX_s) \, \Delta \psi_s \\
&= \sum_{s \leq t} \biggl\{\int_H [f(h)- f(\bfX_{s-})]\,\bfm(\{s\} \times \dd h) \biggr\} \, \biggl\{\int_\R C_s (y)\, \bigl[m^Y-\hat\bfmu^Y](\{s\} \times \dd y)\biggr\} \\
&= \sum_{s \leq t} \biggl[ \ind_{D^m \setminus J^m}(s) \int_H \ind_{\bfK_s^i(h) \neq 0} \, [f(h)- f(\bfX_{s-})] \, C_s\bigl(\bfK_s^i(h)\bigr) \, \bfm(\{s\} \times \dd h) \\
&\qquad + \ind_{J^m}(s) \int_H \ind_{\bfK_s^p(h) \neq 0} \, [f(h)- f(\bfX_{s-})] \, C_s\bigl(\bfK_s^p(h)\bigr) \, \bfm(\{s\} \times \dd h) \biggl]\\
&= \int_0^t \int_H [f(h)- f(\bfX_{s-})] \, C_s\bigl(\bfK_s^i(h)\bigr) \, \ind_{\bfK_s^i(h) \neq 0} \, \bfm^i(\dd s \, \dd h) \\
&\qquad + \int_0^t \int_H [f(h)- f(\bfX_{s-})] \, C_s\bigl(\bfK_s^p(h)\bigr) \, \ind_{\bfK_s^p(h) \neq 0} \, \bfm^p(\dd s \, \dd h).
\end{align*}
}

Putting together this formula with~\eqref{eq:fPsiSmg} and taking the conditional expectation with respect to $\cY_t$ we get:
{
\allowdisplaybreaks
\begin{align}
&\mathrel{\phantom{=}} \Pi_t(f) \, \psi_t
= \bbE[f(\bfX_{t}) \, \psi_t \,|\, \cY_t] = \bbE \biggl[\int_0^t \int_\R f(\bfX_{s^-}) \, C_s(y) \, \bigl[m^Y-\bfmu^Y\bigr](\dd s\, \dd y) \biggm| \cY_t \biggr] \notag
\\
&+ \bbE \biggl[\int_0^t \int_\R f(\bfX_{s^-}) \, C_s(y) \, \bigl[\bfmu^Y-\hat\bfmu^Y\bigr](\dd s\, \dd y) \biggm| \cY_t \biggr] \notag
\\
&+ \bbE \biggl[\int_0^t \int_H \psi_{s^-} \, [f(h)- f(\bfX_{s-})]\,\bfmu(\dd s\, \dd h) \biggm| \cY_t \biggr] \notag
\\
&+ \bbE \biggl[\int_0^t \int_H \psi_{s^-} \, [f(h)- f(\bfX_{s-})]\,(\bfm-\bfmu)(\dd s\, \dd h) \biggm| \cY_t \biggr] \notag
\\
&+ \bbE \biggl[\int_0^t \int_H [f(h)- f(\bfX_{s-})] \, C_s\bigl(\bfK_s^i(h)\bigr) \, \ind_{\bfK_s^i(h) \neq 0} \, (\bfm^i - \bfmu^i)(\dd s \, \dd h) \biggm| \cY_t \biggr] \notag
\\
&+ \bbE \biggl[\int_0^t \int_H [f(h)- f(\bfX_{s-})] \, C_s\bigl(\bfK_s^p(h)\bigr) \, \ind_{\bfK_s^p(h) \neq 0} \, (\bfm^p - \bfmu^p)(\dd s \, \dd h) \biggm| \cY_t \biggr] \notag
\\
&+ \bbE \biggl[\int_0^t \int_H [f(h)- f(\bfX_{s-})] \, C_s\bigl(\bfK_s^i(h)\bigr) \, \ind_{\bfK_s^i(h) \neq 0} \, \bfmu^i(\dd s \, \dd h) \biggm| \cY_t \biggr] \notag
\\
&+ \bbE \biggl[\int_0^t \int_H [f(h)- f(\bfX_{s-})] \, C_s\bigl(\bfK_s^p(h)\bigr) \, \ind_{\bfK_s^p(h) \neq 0} \, \bfmu^p(\dd s \, \dd h) \biggm| \cY_t \biggr]
\label{eq:PiPsiSmg}
\end{align}
}
where, for any $B \in \cB(H), \, t \geq 0$, we have set
\begin{align*}
\bfmu^i\bigl((0,t] \times B\bigr) &\coloneqq \int_0^t \int_B \bflambda^m(s, \bfX_{s^-})\, \bfQ^m(s,\bfX_{s^-}; \dd h) \, \dd s,\\
\bfmu^p\bigl((0,t] \times B\bigr) &\coloneqq \int_0^t \int_B \bfR^m(s,\bfX_{s^-}; \dd h) \,\dd p^m_s.
\end{align*}

Thanks to the boundedness of process $C$, following the same lines of the proof of Lemma~\ref{L: martY} we have that the following four processes indicated by $(i)-(iv)$, defined for any $t \in [0,T]$, are $(\cY_t)_{t \in [0,T]}$-martingales
{
\allowdisplaybreaks
\begin{align*}
&(i) \hspace{.4cm} \bbE \biggl[\int_0^t \!\!\! \psi_{s^-} \! \! \!\int_H [f(h)- f(\bfX_{s-})]\,\bfmu(\dd s\, \dd h) \bigg |\cY_t\biggr] - \int_0^t\!\! \psi_{s^-} \Pi_s\bigl(\cL_s f\bigr) \, \dd s - \int_0^t \!\!\psi_{s^-} \Pi_{s^-}\bigl(\cA_s f\bigr) \, \dd p^m_s,
\\
&(ii) \hspace{.25cm} \bbE \biggl[\int_0^t \int_\R f(\bfX_{s^-}) \, C_s(y) \, \bigl[\bfmu^Y-\hat\bfmu^Y\bigr](\dd s\, \dd y) \biggm| \cY_t \biggr] \\
&\qquad  -\biggl[\int_0^t \biggl\{\int_H \Pi_{s^-}\bigl(f(\cdot) \, C_s\bigl(\bfK^i(s, \cdot, Y_{s^-}, h)\bigr) \, \ind_{d^{i,K}(s,\cdot,Y_{s^-})}(h) \,\bflambda^m(s, \cdot) \bfQ^m(s, \cdot ; \dd h)\bigr) \\
&\qquad \qquad+ \! \int_Z \! \Pi_{s^-}\bigl(f(\cdot) \, C_s\bigl(\bfG^i(s, \cdot, Y_{s^-}, h)\bigr) \, \ind_{d^{i,G}(s,\cdot,Y_{s^-})}(h) \,\bflambda^n(s, \cdot, Y_{s-}) \bfQ^n(s, \cdot, Y_{s-}; \dd z)\bigr) \biggr\} \, \dd s \\
&\qquad \quad + \int_0^t \int_H \Pi_{s^-}\bigl(f(\cdot) \, C_s\bigl(\bfK^p(s, \cdot, Y_{s^-}, h)\bigr) \, \ind_{d^{p,K}(s,\cdot,Y_{s^-})}(h) \,\bfR^m(s, \cdot; \dd h)\bigr) \, \dd p_s^m \\
&\qquad \quad + \int_0^t \int_Z \Pi_{s^-}\bigl(f(\cdot) \, C_s\bigl(\bfG^p(s, \cdot, Y_{s^-}, h)\bigr) \, \ind_{d^{p,G}(s,\cdot,Y_{s^-})}(h) \,\bfR^n(s, \cdot, Y_{s-}; \dd z)\bigr) \, \dd p_s^n \\
&\qquad \quad - \int_0^t \int_\R \Pi_{s^-}(f) \, C_s(y) \, \hat\bfmu^Y(\dd s\, \dd y) \biggr],
\\
&(iii) \hspace{.15cm} \bbE \biggl[\int_0^t \int_H [f(h)- f(\bfX_{s-})] \, C_s\bigl(\bfK_s^i(h)\bigr) \, \ind_{\bfK_s^i(h) \neq 0} \, \bfmu^i(\dd s \, \dd h) \biggm| \cY_t \biggr]
\\
&\quad -\int_0^t \int_H \Pi_{s^-} \bigl( [f(h)- f(\cdot)] \, C_s\bigl(\bfK^i(s, \cdot, Y_{s^-}, h)\bigr) \, \ind_{d^{i,K}(s,\cdot,Y_{s^-})}(h) \, \bflambda^m(s, \cdot)\, \bfQ^m(s,\cdot; \dd h) \bigr) \, \dd s,
\\
&(iv) \hspace{.2cm} \bbE \biggl[\int_0^t \int_H [f(h)- f(\bfX_{s-})] \, C_s\bigl(\bfK_s^p(h)\bigr) \, \ind_{\bfK_s^p(h) \neq 0} \, \bfmu^p(\dd s \, \dd h) \biggm| \cY_t \biggr]
\\
&\quad -\int_0^t \int_H \Pi_{s^-} \bigl( [f(h)- f(\cdot)] \, C_s\bigl(\bfK^p(s, \cdot, Y_{s^-}, h)\bigr) \, \ind_{d^{p,K}(s,\cdot,Y_{s^-})}(h) \, \bfR^m(s,\cdot; \dd h) \bigr) \, \dd p^m_s.
\end{align*}
}
Therefore~\eqref{eq:PiPsiSmg} can be rewritten as:
{
\allowdisplaybreaks
\begin{align}
&\mathrel{\phantom{=}} \Pi_t(f) \, \psi_t = \int_0^t \psi_{s^-} \, \Pi_s\bigl(\cL_s f\bigr) \, \dd s + \int_0^t \psi_{s^-} \Pi_{s^-}\bigl(\cA_s f\bigr) \, \dd p^m_s \notag
\\
&+ \int_0^t \biggl\{\int_H \Pi_{s^-}\bigl(f(\cdot) \, C_s\bigl(\bfK^i(s, \cdot, Y_{s^-}, h)\bigr) \, \ind_{d^{i,K}(s,\cdot,Y_{s^-})}(h) \,\bflambda^m(s, \cdot) \bfQ^m(s, \cdot ; \dd h)\bigr) \notag
\\
&\qquad+ \int_Z \Pi_{s^-}\bigl(f(\cdot) \, C_s\bigl(\bfG^i(s, \cdot, Y_{s^-}, h)\bigr) \, \ind_{d^{i,G}(s,\cdot,Y_{s^-})}(h) \,\bflambda^n(s, \cdot, Y_{s-}) \bfQ^n(s, \cdot, Y_{s-}; \dd z)\bigr) \biggr\} \, \dd s \notag
\\
&+ \int_0^t \int_H \Pi_{s^-}\bigl(f(\cdot) \, C_s\bigl(\bfK^p(s, \cdot, Y_{s^-}, h)\bigr) \, \ind_{d^{p,K}(s,\cdot,Y_{s^-})}(h) \,\bfR^m(s, \cdot; \dd h)\bigr) \, \dd p_s^m \notag
\\
&+ \int_0^t \int_Z \Pi_{s^-}\bigl(f(\cdot) \, C_s\bigl(\bfG^p(s, \cdot, Y_{s^-}, h)\bigr) \, \ind_{d^{p,G}(s,\cdot,Y_{s^-})}(h) \,\bfR^n(s, \cdot, Y_{s-}; \dd z)\bigr) \, \dd p_s^n \notag
\\
&+\int_0^t \int_H \Pi_{s^-} \bigl( [f(h)- f(\cdot)] \, C_s\bigl(\bfK^i(s, \cdot, Y_{s^-}, h)\bigr) \, \ind_{d^{i,K}(s,\cdot,y)}(h) \, \bflambda^m(s, \cdot)\, \bfQ^m(s,\cdot; \dd h) \bigr) \, \dd s \notag
\\
&+ \int_0^t \int_H \Pi_{s^-} \bigl([f(h)- f(\cdot)] \, C_s\bigl(\bfK^p(s, \cdot, Y_{s^-}, h)\bigr) \, \ind_{d^{p,K}(s,\cdot,y)}(h) \, \bfR^m(s,\cdot; \dd h) \bigr) \, \dd p^m_s \notag
\\
&- \int_0^t \int_\R \Pi_{s^-}(f) \, C_s(y) \, \hat\bfmu^Y(\dd s\, \dd y) + M^{(3)}_t,  \label{eq:PiPsiSmg2}
\end{align}
}for all $t \in [0,T]$, where $(M^{(3)}_t)_{t \in [0,T]}$ is a $(\cY_t)_{t \in [0,T]}$-martingale.

On the other hand, from~\eqref{Piformula}, we have that
\begin{align}
 \Pi_t(f) \, \psi_t
&= \int_0^t \Pi_{s^-}(f) \, \dd \psi_s + \int_0^t \psi_{s^-} \, \dd \Pi_s(f) + \bigl[\psi, \Pi(f)\bigr]_t
\\
&= \int_0^t \int_\R \Pi_{s^-}(f) \, C_s(y) \, \bigl[m^Y-\hat\bfmu^Y\bigr](\dd s\, \dd y)
\\
&+ \int_0^t \psi_{s^-} \, \Pi_{s^-}\bigl(\cL_s f\bigr) \, \dd s + \int_0^t \psi_{s^-} \, \Pi_{s^-}\bigl(\cA_s f\bigr) \, \dd p^m_s + \int_0^t \psi_s \, \gamma_s \, \dd I_s
\\
 & + \int_0^t \int_\R \psi_{s^-} \, U_s(y)\, (m^Y - \hat \bfmu^Y)(\dd s\, \dd y) + \sum_{s \leq t} \Delta \Pi_s(f) \, \Delta \psi_s, \label{eq:PiPsiSmg3}
\end{align}
for all $t \in [0,T]$.

With similar computations as above and using the properties of the processes $U$ and $C$, we can write:
\begin{equation*}
\sum_{s \leq t} \Delta \Pi_s(f) \, \Delta \psi_s = \int_0^t \int_\R U_s(y) \, C_s(y) \, m^Y(\dd s \, \dd y), \quad t \in [0,T],
\end{equation*}
hence~\eqref{eq:PiPsiSmg3} can be rewritten as
\begin{align}
 \Pi_t(f) \, \psi_t
&= \int_0^t \psi_{s^-} \, \Pi_{s^-}\bigl(\cL_s f\bigr) \, \dd s + \int_0^t \psi_{s^-} \, \Pi_{s^-}\bigl(\cA_s f\bigr) \, \dd p^m_s +
\\
& + \int_0^t \int_\R U_s(y) \, C_s(y) \, \hat \bfmu^Y(\dd s \, \dd y) + M^{(4)}_t, \label{eq:PiPsiSmg4}
\end{align}
for all $t \in [0,T]$, where $(M^{(4)}_t)_{t \in [0,T]}$ is a $(\cY_t)_{t \in [0,T]}$-martingale.

Formulas~\eqref{eq:PiPsiSmg2} and~\eqref{eq:PiPsiSmg4} entail that $(\Pi_t(f) \, \psi_t)_{t \in [0,T]}$ is a $(\cY_t)_{t \in [0,T]}$-special semimartingale and, by uniqueness of the decomposition, the finite variation parts appearing in these two formulas must be equal. Hence
{
\allowdisplaybreaks
\begin{align}
&\mathrel{\phantom{=}} \int_0^t \biggl\{\int_H \Pi_{s^-}\bigl(f(\cdot) \, C_s\bigl(\bfK^i(s, \cdot, Y_{s^-}, h)\bigr) \, \ind_{d^{i,K}(s,\cdot,Y_{s^-})}(h) \,\bflambda^m(s, \cdot) \bfQ^m(s, \cdot ; \dd h)\bigr) \notag
\\
&\qquad+ \int_Z \Pi_{s^-}\bigl(f(\cdot) \, C_s\bigl(\bfG^i(s, \cdot, Y_{s^-}, h)\bigr) \, \ind_{d^{i,G}(s,\cdot,Y_{s^-})}(h) \,\bflambda^n(s, \cdot, Y_{s-}) \bfQ^n(s, \cdot, Y_{s-}; \dd z)\bigr) \biggr\} \, \dd s \notag
\\
&+ \int_0^t \int_H \Pi_{s^-}\bigl(f(\cdot) \, C_s\bigl(\bfK^p(s, \cdot, Y_{s^-}, h)\bigr) \, \ind_{d^{p,K}(s,\cdot,Y_{s^-})}(h) \,\bfR^m(s, \cdot; \dd h)\bigr) \, \dd p_s^m
\notag\\
&+ \int_0^t \int_Z \Pi_{s^-}\bigl(f(\cdot) \, C_s\bigl(\bfG^p(s, \cdot, Y_{s^-}, h)\bigr) \, \ind_{d^{p,G}(s,\cdot,Y_{s^-})}(h) \,\bfR^n(s, \cdot, Y_{s-}; \dd z)\bigr) \, \dd p_s^n \notag
\\
&+\int_0^t \int_H \Pi_{s^-} \bigl([f(h)- f(\cdot)] \, C_s\bigl(\bfK^i(s, \cdot, Y_{s^-}, h)\bigr) \, \ind_{d^{i,K}(s,\cdot,y)}(h) \, \bflambda^m(s, \cdot)\, \bfQ^m(s,\cdot; \dd h) \bigr) \, \dd s \notag
\\
&+ \int_0^t \int_H \Pi_{s^-} \bigl([f(h)- f(\cdot)] \, C_s\bigl(\bfK^p(s, \cdot, Y_{s^-}, h)\bigr) \, \ind_{d^{p,K}(s,\cdot,y)}(h) \, \bfR^m(s,\cdot; \dd h) \bigr) \, \dd p^m_s \notag
\\
&- \int_0^t \int_\R \Pi_{s^-}(f) \, C_s(y) \, \hat\bfmu^Y(\dd s\, \dd y) = \int_0^t \int_\R U_s(y) \, C_s(y) \, \hat \bfmu^Y(\dd s \, \dd y), \quad t \in [0,T]. \label{eq:Uequality}
\end{align}
}
We recall that this formula must hold for any bounded $\bbY$-predictable random field $C$. Therefore, to complete the identification of the random field $U$ it is convenient to proceed \emph{via} a monotone class argument.

Consider $C$ of the form $C_t(y) = c_t \ind_B(y)$, $t \geq 0$, where $B \in \cB(\R)$ and $(c_t)_{t \geq 0}$ is a bounded $\bbY$-predictable process. With this choice~\eqref{eq:Uequality} becomes:
{
\allowdisplaybreaks
\begin{align}
&\mathrel{\phantom{=}} \int_0^t \int_\R c_s \ind_B(y) \, [\eta_f^i(\dd s \, \dd y) - \Pi_{s^-}(f) \, \eta^i(\dd s \, \dd y) + \rho_f^i(\dd s \, \dd y)] \notag
\\
&+ \int_0^t \int_\R c_s \ind_B(y) \,  [\eta_f^{p,m}(\dd s \, \dd y) - \Pi_{s^-}(f) \, \eta^{p,m}(\dd s \, \dd y) + \rho_f^{p,m}(\dd s \, \dd y)] \notag
\\
&+ \int_0^t \int_\R c_s \ind_B(y) \,  [\eta_f^{p,n}(\dd s \, \dd y) - \Pi_{s^-}(f) \, \eta^{p,n}(\dd s \, \dd y)] \notag
\\
&= \int_0^t \int_\R U_s(y) \, c_s \, \ind_B(y) \, \hat \bfmu^Y(\dd s \, \dd y), \quad t \in [0,T]. \label{eq:Uequality2}
\end{align}
}
The integral on the r.h.s. of~\eqref{eq:Uequality2} can be written as:
{
\allowdisplaybreaks
\begin{align*}
&\mathrel{\phantom{=}} \int_0^t \int_\R U_s(y) \, c_s \, \ind_B(y) \, \hat \bfmu^Y(\dd s \, \dd y) = \int_0^t \int_\R \ind_{J_Y^c}(s) \, U_s(y) \, c_s \, \ind_B(y) \, \hat \bfmu^Y(\dd s \, \dd y) \\
&\qquad + \int_0^t \int_\R \ind_{J_Y^m}(s) \, U_s(y) \, c_s \, \ind_B(y) \, \hat \bfmu^Y(\dd s \, \dd y) + \int_0^t \int_\R \ind_{J_Y^n}(s) \, U_s(y) \, c_s \, \ind_B(y) \, \hat \bfmu^Y(\dd s \, \dd y) \\
&= \int_0^t \int_\R \ind_{J_Y^c}(s) \, U_s(y) \, c_s \, \ind_B(y) \, \eta^i(\dd s \, \dd y) + \int_0^t \int_\R \ind_{J_Y^m}(s) \, U_s(y) \, c_s \, \ind_B(y) \, \eta^{p,m}(\dd s \, \dd y) \\
&\qquad + \int_0^t \int_\R \ind_{J_Y^n}(s) \, U_s(y) \, c_s \, \ind_B(y) \, \eta^{p,n}(\dd s \, \dd y).
\end{align*}
}
Similarly the l.h.s. of~\eqref{eq:Uequality2} can be rewritten as:
{
\allowdisplaybreaks
\begin{align*}
&\mathrel{\phantom{=}} \int_0^t \int_\R \ind_{J_Y^c}(s) \, c_s \ind_B(y) \, [\eta_f^i(\dd s \, \dd y) - \Pi_{s^-}(f) \, \eta^i(\dd s \, \dd y) + \rho_f^i(\dd s \, \dd y)] \\
&+ \int_0^t \int_\R \ind_{J_Y^m}(s) \, c_s \ind_B(y) \,  [\eta_f^{p,m}(\dd s \, \dd y) - \Pi_{s^-}(f) \, \eta^{p,m}(\dd s \, \dd y) + \rho_f^{p,m}(\dd s \, \dd y)] \\
&+ \int_0^t \int_\R \ind_{J_Y^n}(s) \, c_s \ind_B(y) \,  [\eta_f^{p,n}(\dd s \, \dd y) - \Pi_{s^-}(f) \, \eta^{p,n}(\dd s \, \dd y)].
\end{align*}
}
Therefore, to verify~\eqref{eq:Uequality2} it is equivalent to satisfy the following three equalities:
\begin{gather*}
\begin{split}
\int_0^t \int_\R \ind_{J_Y^c}(s) \, c_s \ind_B(y) \, [\eta_f^i(\dd s \, \dd y) - \Pi_{s^-}(f) \, \eta^i(\dd s \, \dd y) + \rho_f^i(\dd s \, \dd y)] \\
= \int_0^t \int_\R \ind_{J_Y^c}(s) \, U_s(y) \, c_s \, \ind_B(y) \, \eta^i(\dd s \, \dd y),
\end{split}
\\
\begin{split}
\int_0^t \int_\R \ind_{J_Y^m}(s) \, c_s \ind_B(y) \,  [\eta_f^{p,m}(\dd s \, \dd y) - \Pi_{s^-}(f) \, \eta^{p,m}(\dd s \, \dd y) + \rho_f^{p,m}(\dd s \, \dd y)] \\
= \int_0^t \int_\R \ind_{J_Y^m}(s) \, U_s(y) \, c_s \, \ind_B(y) \, \eta^{p,m}(\dd s \, \dd y),
\end{split}
\\
\begin{split}
\int_0^t \int_\R \ind_{J_Y^n}(s) \, c_s \ind_B(y) \,  [\eta_f^{p,n}(\dd s \, \dd y) - \Pi_{s^-}(f) \, \eta^{p,n}(\dd s \, \dd y)] \\
= \int_0^t \int_\R \ind_{J_Y^n}(s) \, U_s(y) \, c_s \, \ind_B(y) \, \eta^{p,n}(\dd s \, \dd y).
\end{split}
\end{gather*}
If we decompose $U$ as in~\eqref{eq:U}, these three identities are verified. Hence, by a monotone class argument, we get that~\eqref{eq:Uequality} is satisfied for any bounded $\bbY$-predictable random field $C$ with this decomposition of $U$. Finally, it can be verified \emph{via} standard computations that $U$ satisfies~\eqref{int_cond} and this concludes our proof.
\end{proof}

\subsection{\texorpdfstring{The filtering equation for $\pi$}{The filtering equation for pi}}\label{sec:filterpi}
We conclude the section by writing the explicit SDE for the filter $\pi$, defined in \eqref{eq:pidef}, which provides in particular the conditional law of the signal process $X$ 
with respect to the observation $\sigma$-algebra $\cY_t$. 
This is obtained using the filtering equation for the filter $\Pi$, defined in \eqref{eq:Pidef} and the results of Section \ref{sec:markov}.

Before giving the equation for $\pi$, we need some definitions. We define the random measures:
\begin{align}
\eta_\phi^i\bigl((0,t] \times B\bigr) &\coloneqq \int_0^t \biggl\{\int_E \pi_{s^-}\bigl(\phi(\cdot) \, \ind_{B\setminus\{0\}}\left(K^i(s, \cdot, Y_{s^-},e)\right) \,\lambda^m(s, \cdot) Q^m(s, \cdot ; \dd e)\bigr) \notag
\\
&\qquad + \int_Z \pi_{s^-}\bigl(\phi(\cdot) \, \ind_{B\setminus\{0\}}\left(G^i(s, \cdot, Y_{s^-},z)\right)  \,\lambda^n(s, \cdot, Y_{s^-}) Q^n(s, \cdot, Y_{s^-}; \dd z)\bigr) \biggr\} \, \dd s, \label{eq:etaphii}
\\
\eta_\phi^{p,m}\bigl((0,t] \times B\bigr) &\coloneqq \int_0^t \int_E \pi_{s^-}\bigl(\phi(\cdot) \, \ind_{B\setminus\{0\}}\left(K^p(s, \cdot, Y_{s^-},e)\right) \,R^m(s, \cdot; \dd e)\bigr) \, \dd p_s^m, \label{eq:etaphipm}
\\
\eta_\phi^{p,n}\bigl((0,t] \times B\bigr) &\coloneqq \int_0^t \int_Z \pi_{s^-}\bigl(\phi(\cdot) \, \ind_{B\setminus\{0\}}\left(G^p(s, \cdot, Y_{s^-},z)\right) \,R^n(s, \cdot, Y_{s^-}; \dd z)\bigr) \, \dd p_s^n, \label{eq:etaphipn}
\\
\rho_\phi^i\bigl((0,t] \times B\bigr) &\coloneqq \int_0^t \int_E \pi_{s^-}\bigl([\phi(J_{\widetilde \cD_E}(\cdot,s,e)) - \phi(\cdot)] \ind_{B\setminus\{0\}}\left(K^i(s, \cdot, Y_{s^-},e)\right) \times \notag
\\
&\qquad \qquad \qquad \times \lambda^m(s, \cdot) Q^m(s, \cdot ; \dd e)\bigr) \, \dd s, \label{eq:rhophii}
\\
\rho_\phi^{p,m}\bigl((0,t] \times B\bigr) &\coloneqq \int_0^t \int_E \pi_{s^-}\bigl([\phi(J_{\widetilde \cD_E}(\cdot,s,e)) - \phi(\cdot)] \, \ind_{B\setminus\{0\}}\left(K^p(s, \cdot, Y_{s^-},e)\right)R^m(s, \cdot; \dd e)\bigr) \, \dd p_s^m, \label{eq:rhophipm}
\end{align}
where $t > 0$, $B \in \cB(\R)$ and $\phi \colon \cD_E \rightarrow \R$ is any bounded and measurable function. Notice that applying relations \eqref{eq:JHJDE}, \eqref{eq:JDEJH}, and definitions \eqref{D:bfQ}, \eqref{D:bfR}, \eqref{D:bflambda}, \eqref{eq:obscoeffs}, one can easily verify that $\eta_\phi^i = \eta_f^i$, $\eta_\phi^{p,m} = \eta_f^{p,m}$, $\eta_\phi^{p,n} = \eta_f^{p,n}$, $\rho_\phi^i = \rho_f^i$, $\rho_\phi^{p,m} = \rho_f^{p,m}$, $\bbP$-a.s., whenever $\phi = f \circ S^{-1}$ or, equivalently, $f = \phi \circ S$, where $S$ is the map given in Proposition \ref{P_S}.
Applying the same formulas, we get that the random measure
\begin{align}
\hat\mu^Y\bigl((0,t] \times B\bigr)
&= \int_0^t \biggl\{\int_E \pi_{s^-}\bigl(\ind_{B\setminus\{0\}}\left(K^i(s, \cdot, Y_{s^-},e)\right) \,\lambda^m(s, \cdot) Q^m(s, \cdot ; \dd e)\bigr)\bigr) \\
&\qquad + \int_Z \pi_{s^-}\bigl(\ind_{B\setminus\{0\}}\left(G^i(s, \cdot, Y_{s^-},z)\right)  \,\lambda^n(s, \cdot, Y_{s^-}) Q^n(s, \cdot, Y_{s^-}; \dd z)\bigr)\bigr) \biggr\} \, \dd s
\\
&+ \int_0^t \int_E \pi_{s^-}\bigl(\ind_{B\setminus\{0\}}\left(K^p(s, \cdot, Y_{s^-},e)\right) \,R^m(s, \cdot; \dd e)\bigr) \, \dd p_s^m \\
&+ \int_0^t \int_Z \pi_{s^-}\bigl(\ind_{B\setminus\{0\}}\left(G^p(s, \cdot, Y_{s^-},z)\right) \,R^n(s, \cdot, Y_{s^-}; \dd z)\bigr) \, \dd p_s^n, \label{eq:mYYcompnew}
\end{align}
coincides with random measure $\hat \bfmu^Y$, given in \eqref{eq:mYYcomp} and  therefore, it provides a version of the $\bbY$-dual predictable projection of random measure $m^Y$.

Finally, we define for all $t > 0$ operators $\dL_t \colon \dB_b(\cD_E) \to \dB_b(\widetilde \cD_E)$ and $\dA_t \colon \dB_b(\cD_E) \to \dB_b(\widetilde \cD_E)$ as
\begin{align}
\dL_t \phi(x) &\coloneqq \int_E \left[\phi(J_{\widetilde \cD_E}(x,t,e)) - \phi(x)\right] \, \lambda^m(t, x) \, Q^m(t, x; \, \dd e), \quad x \in \widetilde \cD_E, \label{eq:opLnew} \\
\dA_t \phi(x) &\coloneqq \int_E \left[\phi(J_{\widetilde \cD_E}(x,t,e)) - \phi(x)\right] \, R^m(t, x; \, \dd e), \quad x \in \widetilde \cD_E. \label{eq:opAnew}
\end{align}
By a similar reasoning, applying relations \eqref{eq:JHJDE}, \eqref{eq:JDEJH}, and definitions \eqref{D:bfQ}, \eqref{D:bfR}, \eqref{D:bflambda}, it holds that $\dL_t \phi = \cL_t f$ and $\dA_t \phi = \cA_t f$, whenever $\phi = f \circ S^{-1}$ or, equivalently, $f = \phi \circ S$.

Therefore, recalling also the equations \eqref{eq:filterequality} and \eqref{eq:JY}--\eqref{eq:JYn}, we get the following  corollary of Theorem \ref{th:filtering_equation}.
\begin{corollary}\label{cor:filterpi}
Under Assumptions \ref{hp:jointlaw}, \ref{hp:sdeexistuniq}, \ref{ass:girsanov}, and \ref{hp:martrepr}, for each $T > 0$ and for any bounded and measurable function $\phi \colon \cD_E \rightarrow \R$ the filter $\pi$, defined in \eqref{eq:pidef}, satisfies the SDE:
\begin{align}\label{eq:filterpi}
\pi_t(\phi)&= \pi_0(\phi) + \int_0^t \pi_{s^-}\bigl(\dL_s \phi\bigr) \, \dd s + \int_0^t \pi_{s^-}\bigl(\dA_s \phi\bigr) \, \dd p^m_s\\
&+ \int_0^t    \gamma_s\, \dd I_s+\int_0^t \int_\R U_s(y)  \, (m^Y- \hat \mu^Y)(\dd s\,\dd y), \quad t \in [0,T],
\end{align}
where
\begin{align}
&\gamma_t = \frac{\pi_{t}\bigl( \phi(\cdot) \, b(t, \cdot, Y_t)\bigr)- \pi_t(\phi)\,\pi_t\bigl(b(t, \cdot, Y_t)\bigr)}{\sigma(t, Y_t)},  &\dd \bbP \otimes \dd t\text{-a.e.},\label{gammanew}
\end{align}
and $U$ satisfies
\begin{equation}\label{eq:Unew}
U_t(y) = U^i_t(y) \, \ind_{J_Y^c}(t) + U^{p,m}_t(y) \, \ind_{J_Y^m}(t) + U^{p,n}_t(y) \, \ind_{J_Y^n}(t), \quad \dd \bbP \otimes \dd \hat \mu^Y\text{-a.e.},
\end{equation}
where
\begin{align}
U^i_t(y) &\coloneqq \frac{\dd \eta_\phi^i}{\dd \eta^i}(t,y)-\pi_{t^-}(\phi)+ \frac{\dd \rho_\phi^i}{\dd \eta^i}(t,y), \label{eq:Uinew}
\\
U^{p,m}_t(y) &\coloneqq \frac{\dd \eta_\phi^{p,m}}{\dd \eta^{p,m}}(t,y)-\pi_{t^-}(\phi)+ \frac{\dd \rho_\phi^{p,m}}{\dd \eta^{p,m}}(t,y), \label{eq:Upmnew}
\\
U^{p,n}_t(y) &\coloneqq \frac{\dd \eta_\phi^{p,n}}{\dd \eta^{p,n}}(t,y)-\pi_{t^-}(\phi). \label{eq:Upnnew}
\end{align}
\end{corollary}%
\color{blue}
\begin{remark}\label{rem:filternodiff}
	The same techniques can be replicated to cover the case where the dynamics of the observation process is given by 
	\begin{equation}\label{eq:Yode}
\left\{
\begin{aligned}
	\dd Y_t &= b_t \, \dd t+ \int_E K^i_t(e) \, m^i(\dd t \, \dd e) + \int_E K^p_t(e) \, m^p(\dd t \, \dd e) \\
	& + \int_Z G^i_t(z) \, n^i(\dd t \, \dd z) + \int_Z G^p_t( z) \, n^p(\dd t \, \dd z), \\
	Y_0 &= y_0 \in \R.
\end{aligned}
\right.
\end{equation}
In this case the innovation process is only given by the compensated measure  $m^Y- \hat \bfmu^Y$, and therefore Assumption \ref{ass:girsanov} is not needed. Moreover, Assumption \ref{hp:martrepr} is automatically satisfied, 
see e.g. \cite{jacod:mpp}. The filtering equation in this context reduces to 
\begin{align}\label{eq:filterpi2}
\quad \qquad\qquad \qquad\pi_t(\phi)&= \pi_0(\phi) + \int_0^t \pi_{s^-}\bigl(\dL_s \phi\bigr) \, \dd s + \int_0^t \pi_{s^-}\bigl(\dA_s \phi\bigr) \, \dd p^m_s\\
&+\int_0^t \int_\R U_s(y)  \, (m^Y- \hat \mu^Y)(\dd s\,\dd y), \quad t \in [0,T]. \quad \qquad \qquad \qquad\qed
\end{align}
\end{remark}
\normalcolor

\color{blue}
\section{Examples}\label{sec:examples}
In this section we present three examples which are covered by our general setting. For these examples we verify, first, that Assumptions \ref{hp:jointlaw}, \ref{hp:sdeexistuniq}, \ref{ass:girsanov}, and \ref{hp:martrepr} are satisfied and then we  write the filtering equations. 
\normalcolor
\begin{example}[Deterministic jump times of the signal]\label{example:det_times}
On a filtered probability space $(\Omega, \cF, \bbF, \bbP)$, we consider a \mbox{pure-jump} signal process $X$ taking values in a discrete and finite space $E$, whose jump times are deterministic, i.e. 
\begin{equation*}
X_t = \sum_{n \in \N_0} \zeta_n \ind_{[t_n, t_{n+1})}, \quad t \geq 0,
\end{equation*}
where
{\color{blue} $t_0 = 0$, $\zeta_0 = e \in E$, $(t_n)_{n \in \N} \subset (0,+\infty]$ is a deterministic sequence of time points, and $(\zeta_n)_{n \in \N}$ is a sequence of $E$-valued random variables satisfying, for all $n \in \N$, the conditions
\begin{itemize}
\item $t_n \leq t_{n+1}$ and $t_n < +\infty \, \Longrightarrow \, t_n < t_{n+1}$;
\item $\zeta_n$ is $\cF_{t_n}$-measurable;
\item $\lim_{n \to \infty} t_n = +\infty$.
\end{itemize}
We assume that $(\zeta_n)_{n \in \N}$ is an $E$-valued \mbox{discrete-time} Markov chain, with probability transition matrix $(r_{ij})_{i, \, j \in E}$, and for simplicity $r_{ii} = 0$, $i \in E$. The observed process $Y$ satisfies the SDE
\begin{equation*}
\left\{
\begin{aligned}
\dd Y_t &= b(t, X_{t \land \cdot}, Y_t) \, \dd t + 
\, \dd W_t, \quad t \geq 0, \\
Y_0 &= y \in \R,
\end{aligned}
\right.
\end{equation*}
where $b$ is a Lipschitz continuous and bounded function and $W$ is a standard Brownian motion independent of random variables $(\zeta_n)_{n \in \N}$.
This model satisfies Assumptions~\ref{hp:jointlaw}-\eqref{hp:Yunique}\eqref{hp:m_n_disjoint}, \ref{hp:sdeexistuniq}, \ref{ass:girsanov}, and \ref{hp:martrepr}.
}

We define the random counting measure $p_t^m \coloneqq \sum_{n \in \N} \ind_{t_n \leq t}$, $t > 0$. It is evident that $p^m$ is a $\bbY$-predictable random measure  and that, by the previous assumptions, $\bbE[p_t^m] = p_t^m < +\infty$, for all $t > 0$.

{\color{blue} The random counting measure $m$ associated to the signal process $X$ admits the $\bbF$-compensator $\mu$ given by
\begin{equation*}
\mu((0,t] \times \{j\}) = \int_0^t r_{X_{s^-} \, j} \, \dd p_s^m, \quad t > 0, \, j \in E,
\end{equation*}
Therefore, $R^m(t,x; \{j\}) = r_{x(t^-) \, j}$, for all $t > 0$, $x \in \cD_E$, and $j \in E$, and hence Assumption~\ref{hp:jointlaw}-\eqref{hp:predproj} is also verified.
}

$X$ is a pure jump process with deterministic jump times, whose positions are not observed, and  modify the drift coefficient of the above SDE. In this model there is knowledge of the exact times at which these changes happen, but the new position of $X$ is not known and needs to be estimated.

{\color{blue}
By applying Corollary~\ref{cor:filterpi} we get that for any $T > 0$ the filtering equation for $\pi$ is given by 
\begin{align*}
\pi_t(\phi) = \pi_0(\phi) + \int_0^t \pi_{s^-}\bigl(\dA_s \phi\bigr) \, \dd p^m_s + \int_0^t  \gamma_s\, \dd I_s, \quad t \in [0,T],
\end{align*}
for any bounded and measurable function $\phi \colon \cD_E \rightarrow \R$ 
where
\begin{equation*}
\dA_t \phi(x) \coloneqq \sum_{j \in E} \left[\phi(J_{\widetilde \cD_E}(x,t,j)) - \phi(x)\right] \, r_{x(t^-) \, j}, \quad x \in \widetilde \cD_E, \, t > 0,
\end{equation*}
and
\begin{align}
\gamma_t = 
\pi_{t}\bigl( \phi(\cdot) \, b(t, \cdot, Y_t)\bigr)- \pi_t(\phi)\,\pi_t\bigl(b(t, \cdot, Y_t)\bigr)
, \quad t \in [0,T].
\end{align}
}
\end{example}

\begin{example}[Predictable jumps of the signal triggered by a diffusion process]\label{ex:pred_jumps}
In this example the signal process $X$ is a pure-jump process with {\color{blue} a single} predictable (non-deterministic) jump time,
triggered by {\color{blue} the observed} process $Y$ {\color{blue} once it} reaches a threshold $\ell \in \mathbb{R}$.
{\color{blue} This jump time is therefore observable, but we do not observe post-jump position of $X$, and hence the pair $(X,Y)$ constitutes a partially observed system.}

{\color{blue}
To construct this pair of processes we consider a filtered probability space $(\Omega, \cF, \bbF, \bbP)$, let $E$ be a discrete and finite space  and  $\zeta_0, \zeta_1$  be a pair of $E$-valued random variables such that   
\begin{equation*}
\bbP(\zeta_1 = j \mid \zeta_0 = i) = r_{ij}, \quad i,j \in E,
\end{equation*}
where $(r_{ij})_{i, \, j \in E}$ is a probability transition matrix. We also assume for simplicity that $r_{ii} = 0$, $i \in E$.

Let $\widetilde b \colon [0,+\infty) \times E \times \R \to \R$
 be a Lipschitz continuous and bounded function, 
and consider, for each $s \geq 0$, $z \in \R$, and $j \in E$, the following SDE 
\begin{equation*}
\left\{
\begin{aligned}
\dd Z_t^{s,z,j} &= \widetilde b(t,j,Z_t^{s,z,j}) \, \dd t +\dd W_t, \quad t \geq s, \\
Z_s^{s,z,j} &= z \in \R,
\end{aligned}
\right.
\end{equation*}
where $W \coloneqq (W_t)_{t \geq 0}$ is a standard real-valued Brownian motion, independent of $\xi_0, \xi_1$.  
Let $y, \, \ell \in \R$, $\ell \neq y$, be fixed constants and define the stopping time
\begin{equation*}
\tau \coloneqq \inf\{t > 0 \colon Z_t^{0,y,\zeta_0} = \ell\}.
\end{equation*}
The signal and the observed processes, for all $t \geq 0$, are given by 
\begin{align*}
X_t &= \zeta_0 \ind_{t < \tau} + \zeta_1 \ind_{t \geq \tau}, \\
Y_t &= Z_t^{0,y,\zeta_0} \ind_{t < \tau} + Z_t^{\tau, \ell, \zeta_1} \ind_{t \geq \tau}.
\end{align*}

It is clear that, on $\{t < \tau\}$,
\begin{equation*}
Y_t = y + \int_0^t \widetilde b(s,\zeta_0,Y_s) \, \dd s +  W_t
= y + \int_0^t \widetilde b(s,X_s,Y_s) \, \dd s + W_t.
\end{equation*}
Instead, observing that $Y_\tau = Z_\tau^{\tau, \ell, \zeta_1} = \ell$, we have that, on $\{t \geq \tau\}$,
\begin{align*}
Y_t &= \ell + \int_\tau^t \widetilde b(s,\zeta_1,Y_s) \, \dd s + W_t-W_\tau \\
&= y + \int_0^\tau \widetilde b(s,\zeta_0,Y_s) \, \dd s + W_\tau + \int_\tau^t \widetilde b(s,\zeta_1,Y_s) \, \dd s + W_t - W_\tau
\\
&= y + \int_0^t \widetilde b(s,X_s,Y_s) \, \dd s + W_t.
\end{align*}
Therefore, defining for all $t > 0$, $x \in \cD_E$, $y \in \R$, the function $b(t,x,y) \coloneqq \widetilde b(t,x(t),y)$, the observed process $Y$ satisfies the SDE
\begin{equation*}
\left\{
\begin{aligned}
\dd Y_t &= b(t, X_{t \land \cdot}, Y_t) \, \dd t + \dd W_t, \quad t \geq 0, \\
Y_0 &= y \in \R.
\end{aligned}
\right.
\end{equation*}
The hypotheses  on the coefficient $\widetilde b$  ensure that Assumptions~\ref{hp:jointlaw}-\eqref{hp:Yunique}\eqref{hp:m_n_disjoint}, \ref{hp:sdeexistuniq}, \ref{ass:girsanov}, and \ref{hp:martrepr} are verified. 

As in Example \ref{example:det_times},  we define the random counting measure $m$ associated to the signal process $X$ with the $\bbF$-compensator
\begin{equation*}
\mu((0,t] \times \{j\}) = \int_0^t r_{X_{s^-} \, j} \, \dd p_s^m, \quad t > 0, \, j \in E,
\end{equation*}
where $p_t^m \coloneqq \ind_{\tau \leq t}$, $t \geq 0$, is $\bbY$-predictable.
We have that $R^m(t,x; \{j\}) = r_{x(t^-) \, j}$, for all $t > 0$, $x \in \cD_E$, and $j \in E$, and hence,  Assumption~\ref{hp:jointlaw}-\eqref{hp:predproj} holds.

Next, since the pair $(X,Y)$ is a partially observed system where the post-jump position of $X$, i.e., $\zeta_1$, is not observed, we need the filtering equation  to estimate it. 
Applying Corollary~\ref{cor:filterpi}, we get that, for any $T > 0$ and for any bounded and measurable function $\phi \colon \cD_E \rightarrow \R$ the filter $\pi$ satisfies the SDE
\begin{align*}
\pi_t(\phi) = \pi_0(\phi) + \int_0^t  \gamma_s\, \dd I_s + \ind_{t \geq \tau} \pi_{\tau^-}\bigl(\dA_\tau \phi\bigr), \quad t \in [0,T],
\end{align*}
where
\begin{equation*}
\dA_t \phi(x) \coloneqq \sum_{j \in E} \left[\phi(J_{\widetilde \cD_E}(x,t,j)) - \phi(x)\right] \, r_{x(t^-) \, j}, \quad x \in \widetilde \cD_E, \, t > 0,
\end{equation*}
and
\begin{align}
&\gamma_t = \pi_{t}\bigl( \phi(\cdot) \, b(t, \cdot, Y_t)\bigr)- \pi_t(\phi)\,\pi_t\bigl(b(t, \cdot, Y_t)\bigr),  \quad t \in [0,T].
\end{align}
}
\end{example}

{\color{blue}
\begin{example}[Predictable jumps of the signal and the observation]\label{ex:ODEjumps}
In this last example, we follow a construction of the pair $(X,Y)$ analogous to that of Example~\ref{ex:pred_jumps}. The signal process $X$ is still a pure-jump process with a single predictable jump time,
triggered by the observed process $Y$ as it reaches a threshold $\ell \in \mathbb{R}$. In contrast  to the previous example, $Y$ is a non-diffusive process satisfying an equation of the form~\eqref{eq:Yode}. 

We consider a filtered probability space $(\Omega, \cF, \bbF, \bbP)$. Let $E$ be a discrete and finite space and $\zeta_0, \zeta_1$ be a pair of $E$-valued random variables such that  
\begin{equation*}
\bbP(\zeta_1 = j \mid \zeta_0 = i) = r_{ij}, \quad i,j \in E,
\end{equation*}
where $(r_{ij})_{i, \, j \in E}$ is a probability transition matrix and, for simplicity, $r_{ii} = 0$, $i \in E$.

Let $\widetilde b \colon [0,+\infty) \times E \times \R \to \R$ be a Lipschitz continuous and bounded function, 
and consider, for each $s \geq 0$, $z \in \R$, and $j \in E$, the following ODE 
\begin{equation*}
\left\{
\begin{aligned}
\dd Z_t^{s,z,j} &= \widetilde b(t,j,Z_t^{s,z,j}) \, \dd t, \quad t \geq s, \\
Z_s^{s,z,j} &= z \in \R.
\end{aligned}
\right.
\end{equation*}
We also consider two bounded measurable function $K^p \colon (0,+\infty) \times E \times E \to \R \setminus \{0\}$, $G^p \colon (0,+\infty) \times E \times \R \times \R \to \R \setminus \{0\}$, and  a  probability transition kernel $\widetilde R^n \colon (0,+\infty) \times E \times \R \to \cP(\R)$.
Let $y, \, \ell \in \R$, $\ell \neq y$, be fixed constants and set 
\begin{align*}
\tau_1 &\coloneqq \inf\{t > 0 \colon Z_t^{0,y,\zeta_0} = \ell\},\\
\eta_1 &\coloneqq K^p(\tau_1, \zeta_0, \zeta_1),\\
\tau_2 &\coloneqq \inf\{t > \tau_1 \colon Z_t^{\tau_1,\ell+\eta_1,\zeta_1} = \ell\},\\
\eta_2 &\coloneqq G^p(\tau_2, \zeta_1, Z_{\tau_2^-}^{\tau_1, \ell + \eta_1, \zeta_1}, \theta_2),
\end{align*}
where $\theta_2$ is another real random variable 
such that
\begin{equation*}
\bbP(\theta_2 \in B \mid \tau_1, \zeta_1, Z_{\tau_2^-}^{\tau_1, \ell + \eta_1, \zeta_1}) = \widetilde R^n(\tau_2, \zeta_1, Z_{\tau_2^-}^{\tau_1, \ell + \eta_1, \zeta_1}; B), \quad B \in \cB(\R).
\end{equation*}

Next we define the signal and the observed processes, for all $t \geq 0$, as
\begin{align*}
X_t &= \zeta_0 \ind_{t < \tau_1} + \zeta_1 \ind_{t \geq \tau_1}, \\
Y_t &= Z_t^{0,y,\zeta_0} \ind_{t < \tau_1} + Z_t^{\tau_1, \ell + \eta_1, \zeta_1} \ind_{\tau_1 \leq t < \tau_2} + Z_t^{\tau_2, \ell + \eta_2, \theta_2} \ind_{t \geq \tau_2}.
\end{align*}
We also introduce the random counting measures
\begin{align*}
m(\dd t \, \dd e) &\coloneqq \delta_{\{\tau_1, \zeta_1\}}(\dd t \, \dd e),
&
p^m_t &\coloneqq \ind_{\tau_1 \leq t},
\\
n(\dd t \, \dd z) &\coloneqq \delta_{\{\tau_2, \theta_2\}}(\dd t \, \dd z),
&
p^n_t &\coloneqq \ind_{\tau_2 \leq t}.
\end{align*}
We get that $p^m$ and $p^n$ are $\bbY$-predictable and the random counting measure $m$ associated to the signal process $X$ admits the $\bbF$-compensator
\begin{equation*}
\mu((0,t] \times \{j\}) = \int_0^t r_{X_{s^-} \, j} \, \dd p_s^m, \quad t > 0, \, j \in E,
\end{equation*}
whence $R^m(t,x; \{j\}) = r_{x(t^-) \, j}$, for all $t > 0$, $x \in \cD_E$, and $j \in E$.
The random counting measure $n$ admits the $\bbF$-compensator
\begin{equation*}
\nu((0,t] \times B) = \int_0^t \int_B R^n(s, X_{s^- \land \cdot}, Y_{s^-}; \, \dd z) \, \dd p^n_s, \quad B \in \cB(\R),
\end{equation*}
where $R^n(t,x,y; B) \coloneqq \widetilde R^n(t, x(t^-), y; B)$, for all $t > 0$, $x \in \cD_E$, $y \in \R$, and $B \in \cB(\R)$.
This implies that Assumption~\ref{hp:jointlaw}-\eqref{hp:predproj} is verified.

For all $t > 0$, $x \in \cD_E$, $y \in \R$, we consider the function $b(t,x,y) \coloneqq \widetilde b(t,x(t),y)$, and arguing as in Example~\ref{ex:ODEjumps}, we find that the observed process $Y$ satisfies the SDE
\begin{equation*}
\left\{
\begin{aligned}
\dd Y_t &= b(t, X_{t \land \cdot}, Y_t) \, \dd t + \int_E K^p(t,X_{t^- \land \cdot},e) \, m(\dd t \, \dd e) + \int_{\R} G^p(t,X_{t^- \land \cdot},Y_t,z) \, n(\dd t \, \dd z), \quad t \geq 0, \\
Y_0 &= y \in \R,
\end{aligned}
\right.
\end{equation*}
and the hypotheses on the coefficients $\widetilde b$, $K^p$, and $G^p$ ensure that Assumptions~\ref{hp:jointlaw}-\eqref{hp:Yunique}\eqref{hp:m_n_disjoint}, \ref{hp:sdeexistuniq}, and \ref{hp:martrepr} are verified. Note that, in this case, $m=m^p$ and $n=n^p$, where $m^p$, $n^p$ are the random counting measures appearing in~\eqref{eq:Yode}.

Thanks to Remark~\ref{rem:filternodiff}, we deduce that, for any $T > 0$ and for any bounded and measurable function $\phi \colon \cD_E \rightarrow \R$ the filter $\pi$, defined in \eqref{eq:pidef}, satisfies the SDE
\begin{align*}
&\pi_t(\phi) = \pi_0(\phi)\\
& + \ind_{t \geq \tau_1} \left\{\pi_{\tau_1^-}\bigl(\dA_{\tau_1} \phi\bigr) + U_{\tau_1}^{p,m}(\eta_1) - \int_E \pi_{\tau_1^-}\bigl(\phi(J_{\widetilde \cD_E}(\cdot,\tau_1,e)) \, R^m(\tau_1, \cdot; \dd e)\bigr) + \pi_{\tau_1^-}(\phi)\right\} 
\\
&+ \ind_{t \geq \tau_2} \left\{U_{\tau_2}^{p,n}(\eta_2) - \int_Z \pi_{\tau_2^-}\bigl(\phi(\cdot) \,R^n(\tau_2, \cdot, Y_{\tau_2^-}; \dd z)\bigr) + \pi_{\tau_2^-}(\phi)\right\}, \quad t \in [0,T],
\end{align*}
where
\begin{equation*}
\dA_t \phi(x) \coloneqq \sum_{j \in E} \left[\phi(J_{\widetilde \cD_E}(x,t,j)) - \phi(x)\right] \, r_{x(t^-) \, j}, \quad x \in \widetilde \cD_E, \, t > 0.
\end{equation*}

According to the definitions in Section \ref{sec:filterpi}, after some computations and simplifications, the previous expression can be rewritten as
\begin{equation*}
\pi_t(\phi) =
	\begin{dcases}
		 \pi_0(\phi), & 0 \leq t < \tau_1\\
		\frac{\dd \eta_\phi^i}{\dd \eta^i}(\tau_1,\eta_1)+ \frac{\dd \rho_\phi^i}{\dd \eta^i}(\tau_1,\eta_1), &  \tau_1 \leq t < \tau_2\\
		 \frac{\dd \eta_\phi^i}{\dd \eta^i}(\tau_2,\eta_2), &   t \geq  \tau_2.
	\end{dcases}
\end{equation*}
We immediately deduce from the expression above that in this example the filter is a piecewise constant process.
\end{example}
}

\renewcommand\thesection{\sc Appendix}
\section{}
\renewcommand\thesection{\Alph{subsection}}
\renewcommand\thesubsection{\Alph{subsection}}

\subsection{Proof of some technical results of Section \ref{sec:markov}}

\subsubsection{Proof of Proposition \ref{P_S}.}\label{Section_Appendix_2}
Let $h \in H$ and set $t_0 = 0$. We uniquely define a trajectory $x \in \widetilde \cD_E$ setting for each $t \geq 0$
\begin{align*}
S(h) = S(e_0, t_1, e_1, \dots) \coloneqq
\begin{dcases}
\sum_{k =0}^{n-1} e_k 1_{[t_k, t_{k+1})}(\cdot) + e_n 1_{[t_n, + \infty)}(\cdot), &\text{if } n \coloneqq N(h) < \infty, \\
\sum_{k =0}^{\infty} e_k 1_{[t_k, t_{k+1})}(\cdot), &\text{if } N(h) = \infty,
\end{dcases}
\end{align*}
where $N$ is the map given in \eqref{eq:N}.

Consider, now, $x \in \widetilde \cD_E$. We set $t_0 = 0$, $e_0 \coloneqq x(0)$, and, for each $n \in \N$,
\begin{align*}
t_n &\coloneqq
\begin{dcases}
\inf\{t > t_{n-1}: x(t) \neq x(t_{n-1})\}, &\text{if } \{\cdots\} \neq \emptyset, \\
+\infty,	&\text{otherwise},
\end{dcases}
&
e_n &\coloneqq
\begin{dcases}
x(t_n),	&\text{if } t_n < +\infty, \\
\delta,	&\text{if } t_n = +\infty.
\end{dcases}
\end{align*}
Then, $S^{-1}(x) = (e_0, t_1, e_1, \dots)$ uniquely defines a trajectory in $H$.

The map $S$ is bijective.
Its measurability can be established introducing, first, the following metric $d$ on $H$:
\begin{equation*}
d(h, h') =
\begin{dcases}
\sum_{n = 0}^N \dfrac{1}{2^{n+2}} \left[\rho(|t_n - t_n'|) + \rho(d_E(e_n,e_n'))\right], &\text{if } N(h) = N(h') = N, \\
1,	&\text{otherwise,}
\end{dcases}
\end{equation*}
where $\rho(x) = x (1+x)^{-1}$, $x \geq 0$, $d_E$ is the metric on $E$, and we set $t_0 = t_0' = 0$. Notice that $d(h,h') \leq 1$ for all $h, h' \in H$.
This metric makes $H$ a complete and separable metric space. It is possible to prove that $S$ is continuous on $H$ and, therefore, measurable with respect to the Borel $\sigma$-algebra $\cB(H)$.
Also $S^{-1}$ is measurable by Kuratowski's theorem (\citep[Prop.7.15]{bertsekas:stochoptcontrol}).

Finally, since under Assumptions~\ref{hp:jointlaw}-\eqref{eq:lambdambdd},\eqref{eq:ptmbdd} of  the signal process $X$ is $\bbP$-a.s. non-explosive, we have that $X_{t \land \cdot} \in \widetilde \cD_E, \, \bbP$-a.s. for any $t \geq 0$, hence the equalities between the stopped and the history processes follow.
\qed%

\subsubsection{Proof of Proposition \ref{prop:mYFcomp}}\label{Section_Appendix_5}
{We show that the equality
\begin{equation*}
	\bbE \biggl[\int_0^t \int_{\R} C_s(y) \, m^Y(\dd s \, \dd y)\biggr] = \bbE \biggl[\int_0^t \int_{\R} C_s(y) \, \bfmu^Y(\dd s \, \dd y)\biggr],
\end{equation*}	
holds for any $t \geq 0$,  and any nonnegative and $\bbF$-predictable random field $C \colon \Omega \times  [0,+\infty) \times \R \to \R$. To do this, we consider  $C$ of the form $C_t(y) = \gamma_t \, \ind_B(y)$, where $\gamma$ is an $\bbF$-predictable process and
$B \in \cB(\R)$.
	
We recall that $\bfmu$ defined in~\eqref{eq:bfmu} is the $\bbF$-dual predictable projection of the random counting measure $\bfm$ in~\eqref{m}. Thanks to Proposition \ref{P_S} we can rewrite the $\bbF$-dual predictable projection $\nu$ in \eqref{eq:nu} of the measure $n$ as
\begin{align}
	\nu((0,t] \times B) &= \int_0^t \int_B \bfQ^n(s, \bfX_{s^-}, Y_{s^-}; \dd z)\, \bflambda^n(s, \bfX_{s^-}, Y_{s^-}) \, \dd s\notag\\
	& +\int_0^t \int_B \bfR^n(s, \bfX_{s^-}, Y_{s^-}; \dd z) \, \dd p^n_s, \quad t \geq 0, \, B \in \cB(Z).\label{eq:nubold}
\end{align}
where
$\bfQ^n$, $\bfR^n$ and $\bflambda^n$ are defined in~\eqref{eq:obscoeffs}.

Therefore we get that
\begin{align}
&\bbE \biggl[\int_0^t \int_{\R} C_s(y) \, m^Y(\dd s \, \dd y)\biggr] \\
&= \bbE \biggl[ \int_0^t \gamma_s \int_H \ind_{D^{i,K}_s(B)}(h) \, \bfm^i(\dd s \, \dd h) + \int_0^t \gamma_s \int_Z \ind_{D_s^{i,G}(B)}(z) \, n^i(\dd s \, \dd z),
\\
&\quad + \int_0^t \gamma_s \int_H \ind_{D^{p,K}_s(B)}(h) \, \bfm^p(\dd s \, \dd h)\biggr] + \int_0^t \gamma_s \int_Z \ind_{D^{p,G}_s(B)}(z) \, n^p(\dd s \, \dd z) \\
&=\bbE \biggl[\int_0^t \gamma_s \int_H \ind_{D_s^{i,K}(B)}(h) \, \bflambda^m(s, \bfX_{s^-}) \, \bfQ^m(s, \bfX_{s^-} ; \dd h)\bigr) \dd s \\
&\quad+\int_0^t \gamma_s  \int_Z \ind_{D_s^{i,G}(B)}(z) \, \bflambda^n(s, \bfX_{s^-}, Y_{s-}) \, \bfQ^n(s, \bfX_{s^-} , Y_{s-}; \dd z)\bigr) \dd s \\
&\quad+ \int_0^t \gamma_s\int_H \ind_{D_s^{p,K}(B)}(h) \, \,\bfR^m(s, \bfX_{s^-}; \dd h)\bigr) \,\dd p_s^m \\
&\quad+ \int_0^t \gamma_s\int_Z \ind_{D_s^{p,G}(B)}(z) \, \,\bfR^n(s, \bfX_{s^-}, Y_{s-}; \dd z)\bigr) \,\dd p_s^n \biggr],
\end{align}
which concludes the proof.
\qed}

\subsection{Proof of Theorem \ref{T:mtg_rep}}\label{Section_Appendix_7}

We partly follow the proof of \citep[Proposition 2.4]{cecicolaneri:ks}.
{\color{blue}%
We show, first, that $(\cY_t)_{t \in [0,T]} = \bbG^T \vee \bbH^T$. On the one hand, we have that $\widetilde W$ is a $((\cY_t)_{t \in [0,T]}, \widetilde \bbP)$-Brownian motion and that $m^Y$ is the jump measure of $Y$. Therefore, $\bbG^T \vee \bbH^T \subseteq (\cY_t)_{t \in [0,T]}$.%
}
On the other hand, since $Y$ satisfies an SDE driven by $m^Y$ and  $\widetilde W$, namely
\begin{equation*}
	\left\{
	\begin{aligned}
		&\dd Y_t = \sigma(t, Y_t) \, \dd \widetilde W_t + \int_\R z \, m^Y(\dd t \, \dd z), & &t \in [0,T], \\
		&Y_0 = y \in \R,
	\end{aligned}
	\right.
\end{equation*}
the converse implication holds, and hence we get that $(\cY_t)_{t \in [0,T]} = \bbG^T \vee \bbH^T$.
{\color{blue}%
Therefore, Assumption~\ref{hp:martrepr} ensures that $(\cY_t)_{t \in [0,T]}$ is right-continuous and }that any $((\cY_t)_{t \in [0,T]}, \widetilde \bbP)$-local martingale $\widetilde M = (\widetilde M_t)_{t \in [0,T]}$ admits the representation
\begin{align*}
\widetilde M_t= \widetilde M_0 + \int_0^t \widetilde \gamma_s \, \dd \widetilde W_s + \int_0^t \int_\R \widetilde U_s(y) (m^Y - \hat \bfmu^Y) (\dd s \,\dd y), \quad t \in [0,T],
\end{align*}
where $\widetilde \gamma$ and $\widetilde U$ are respectively a $(\cY_t)_{t \in [0,T]}$-predictable process and a $(\cY_t)_{t \in [0,T]}$-predictable random field, satisfying
\begin{equation}
\int_0^T |\widetilde \gamma_s|^2 \dd s < \infty, \quad
\int_0^T \int_\R |\widetilde U_s(y)| \, \hat \bfmu^Y(\dd s \, \dd y) < \infty, \quad \bbP\text{-a.s.}
\end{equation}
Moreover, by~\citep[Th. 11.17]{he:semimart} (see also~\citep[Th. III.4.20]{jacod2013:limit}) we can choose $\widetilde U$ so that the process
\begin{equation*}
\hat{\widetilde U_t} \coloneqq \int_\R \widetilde U_t(y) \, \ind_{t \leq T} \, \hat \bfmu^Y(\{t\} \times \dd y), \quad t \geq 0,
\end{equation*}
satisfies $\{(\omega, t) \in \Omega \times (0,+\infty) \colon \ind_{t \leq T} \, \hat \bfmu^Y(\omega; \, \{t\} \times \R) = 1\} \subset \{(\omega, t) \in \Omega \times (0,+\infty) \colon \hat {\widetilde U_t} = 0\}$.
Notice that, given the structure of $\hat \bfmu^Y$ in~\eqref{eq:mYYcomp}, we also have that:
\begin{multline*}
\{(\omega, t) \in \Omega \times (0,+\infty) \colon \ind_{t \leq T} \, \hat \bfmu^Y(\omega; \, \{t\} \times \R) = 1\} \\
= \{(\omega, t) \in \Omega \times (0,+\infty) \colon \ind_{t \leq T} \, \hat \bfmu^Y(\omega; \, \{t\} \times \R) > 0\} = J_Y \cap (\Omega \times [0,T]),
\end{multline*}
where $J_Y \coloneqq \{(\omega,t) \in \Omega \times [0,+\infty) \colon \hat \bfmu^Y(\{t\} \times \R) > 0\}$.

Next, let $M = (M_t)_{t \in [0,T]}$ be a $((\cY_t)_{t \in [0,T]}, \bbP)$-local martingale.
Then $M_t=\widetilde M_t \, L_t, \, t \in [0,T]$, for some $((\cY_t)_{t \in [0,T]}, \widetilde \bbP)$-local martingale $\widetilde M = (\widetilde M_t)_{t \in [0,T]}$.
Therefore, thanks to the previous representation result, applying Ito's product rule we have that:
\begin{align*}
M_t
&= \widetilde M_t \, L_t = \int_0^t \widetilde M_{s^-} \, \dd L_s + \int_0^t L_{s^-} \, \dd \widetilde M_s + [\widetilde M, Z]_t \\
&= \int_0^t \Bigl[L_s \, \widetilde \gamma_s - \widetilde M_s \frac{\Pi_s\bigl(\bfb(s, \cdot, Y_s)\bigr)}{\sigma_s}\Bigr] \, \dd I_s + \int_0^t \int_\R L_{s^-} \widetilde U_s(y) [m^Y - \bfmu^Y](\dd s \, \dd y).
\end{align*}

To get the representation result in~\eqref{eq:martrepr}, it is enough defining $\gamma_t \coloneqq L_t \, \widetilde \gamma_t - \widetilde M_t \frac{\Pi_t\bigl(\bfb(t, \cdot, Y_t)\bigr)}{\sigma_t}$, $t \in [0,T]$, and $U_t(\cdot) \coloneqq L_{t^-} \widetilde U_t(\cdot)$, $t \geq 0$. We observe that, since all the processes defining $\gamma$ are $(\cY_t)_{t \in [0,T]}$ adapted and right-continuous, we have that $\gamma$ is $(\cY_t)_{t \in [0,T]}$-progressively measurable; Moreover, it is immediate to deduce that $U$ is a $(\cY_t)_{t \in [0,T]}$-predictable random field. Both $\gamma$ and $U$ satisfy the integrability conditions in~\eqref{int_cond}, by standard computations. Since the process $\hat U_t$ defined in~\eqref{eq:hatU} satisfies $\hat U_t = L_{t^-} \hat{\widetilde U_t}, \, t \geq 0$ and $L$ is strictly positive, we have also that
\begin{equation*}
J_Y \cap (\Omega \times [0,T]) \subset \{(\omega, t) \in \Omega \times (0,+\infty) \colon \hat{\widetilde U_t} = 0\} = \{(\omega, t) \in \Omega \times (0,+\infty) \colon \hat U_t = 0\}. \qed
\end{equation*}


\subsection{Proof of Lemma \ref{L: martY}}\label{Section_Appendix_6}
Let us define for any $t \geq 0$
\begin{align*}
\Psi_t&:=\bbE \biggl[\int_0^t \int_H [f(h)- f(\bfX_{s-})]\,\bfmu(\dd s\, \dd h) \biggm |\cY_t\biggr],
&
\Theta_t&:= \int_0^t \Pi_{s^-}\bigl(\cL_s f\bigr) \, \dd s + \int_0^t \Pi_{s^-}\bigl(\cA_s f\bigr) \, \dd p^m_s.
\end{align*}
Thanks to boundedness of $f$ and Assumption~\ref{hp:jointlaw}, $\Psi$ is a well defined process and $\bbE|\Psi_t| < +\infty$, $\bbE|\Theta_t| < +\infty$ for all $t \geq 0$. Moreover $M_t^f = \Psi_t + \Theta_t$, $t \geq 0$.

Therefore, to show that $M^f$ is a $\bbY$-martingale it is sufficient to prove that for any $0 \leq u \leq t$ we have
\begin{align*}
\bbE[\Psi_t|\cY_u] &= \Psi_u, & \bbE[\Theta_t|\cY_u] &= \Theta_u.
\end{align*}

Fix $0 \leq u \leq t < +\infty$. By conditioning to $\cY_u$, we get that
\begin{align*}
\bbE[\Psi_t|\cY_u]&=\Psi_u+\bbE \biggl[\int_u^t \int_H [f(h)- f(\bfX_{s-})]\,\bfmu(\dd s\, \dd h) \biggm |\cY_u\biggr], \\
\bbE[\Theta_t|\cY_u]&=\Theta_u+\bbE\biggl[ \int_u^t \Pi_{s^-}\bigl(\cL_s f\bigr) \, \dd s\bigg |\cY_u\biggr] + \bbE\biggl[ \int_u^t \Pi_{s^-}\bigl(\cA_s f\bigr) \, \dd p^m_s\biggm |\cY_u\biggr].
\end{align*}
Now we have that
\begin{align*}
&\mathrel{\phantom{=}} \bbE \biggl[\int_u^t \int_H [f(h)- f(\bfX_{s-})]\,\bfmu(\dd s\, \dd h) \biggm |\cY_u\biggr] \\
&= \bbE \biggl[\int_u^t \int_H \cL_s f(\bfX_{s^-}) \, \dd s \biggm |\cY_u\biggr] + \bbE \biggl[\int_u^t \int_H \cA_s f(\bfX_{s^-}) \, \dd p^m_s \biggm |\cY_u\biggr]
\\
&=\bbE\biggl[ \int_u^t \Pi_{s^-}\bigl(\cL_s f\bigr) \, \dd s\biggm |\cY_u\biggr] + \bbE\biggl[ \int_u^t \Pi_{s^-}\bigl(\cA_s f\bigr) \, \dd p^m_s\biggm |\cY_u\biggr],
\end{align*}
where the last equality is justified by the fact that $\Pi_{t^-}\bigl(\cL_t f\bigr), \, t \geq 0$, and $\Pi_{t^-}\bigl(\cA_t f\bigr) \, \dd p^m_t, \, t \geq 0$, are the $\bbY$-optional projections of $\cL_t f(\bfX_{t^-}), \, t \geq 0$, and $\cA_t f(\bfX_{t^-}), \, t \geq 0$, respectively, and an application of~\citep[Th. 5.16]{he:semimart} (see also \citep[Ch. VI, (58.3)]{dellacheriemeyer:B}).
\qed

\bibliographystyle{plainnat}
\bibliography{Bibliography}
\end{document}